\NeedsTeXFormat{LaTeX2e}
[1994/12/01]
\documentclass{ijmart-mod}
\chardef\bslash=`\\ 





\hfuzz1pc 

\usepackage{bm}
\usepackage{graphicx}
\usepackage[breaklinks=true]{hyperref}
\usepackage{mathtools}
\usepackage{caption}
\usepackage{amsmath}
\usepackage{amssymb}
\usepackage{array}
\usepackage{multirow}
\usepackage{soul}
\usepackage{xcolor}

\DeclareSymbolFont{symbolsC}{U}{txsyc}{m}{n}
\DeclareMathSymbol{\varparallelinv}{\mathrel}{symbolsC}{10}


\newtheorem{thm}{Theorem}[section]

\newtheorem{lem}[thm]{Lemma}
\newtheorem{prop}[thm]{Proposition}

\theoremstyle{definition}
\newtheorem{defn}[thm]{Definition}
\newtheorem{rem}[thm]{Remark}
\newtheorem{qtn}[thm]{Question}

\theoremstyle{remark}


\newcommand{\quotient}[2]{{\raisebox{.2em}{$#1$}\big/\raisebox{-.2em}{$#2$}}}

\newcommand{\contract}{\mathord{\varparallelinv}} 


\newcommand{\eval}[2][\right]{\relax
  \ifx#1\right\relax \left.\fi#2#1\rvert}




\begin{document}
\title{Flip-graphs of non-orientable filling surfaces}

\author[P. Panda]{Pallavi Panda}
\address{Universit{\'e} Paris 13, Villetaneuse, France}
\email{pallavi.panda@univ-paris13.fr} 

\author[H. Parlier]{Hugo Parlier}
\address{University of Fribourg, Fribourg, Switzerland}
\email{hugo.parlier@unifr.ch} 

\author[L. Pournin]{Lionel Pournin}
\address{Universit{\'e} Paris 13, Villetaneuse, France}
\email{lionel.pournin@univ-paris13.fr}

\begin{abstract}
Consider a surface $\Sigma$ with punctures that serve as marked points and at least one marked point on each boundary component. We build a filling surface $\Sigma_n$ by singling out one of the boundary components and denoting by $n$ the number of marked points it contains.  We consider the triangulations of $\Sigma_n$ whose vertices are the marked points and the associated flip-graph $\mathcal{F}(\Sigma_n)$. Quotienting $\mathcal{F}(\Sigma_n)$ by the homeomorphisms of $\Sigma$ that fix the privileged boundary component results in a finite graph $\mathcal{MF}(\Sigma_n)$. Bounds on the diameter of $\mathcal{MF}(\Sigma_n)$ are available when $\Sigma$ is orientable and we provide corresponding bounds when $\Sigma$ is non-orientable. We show that the diameter of this graph grows at least like $5n/2$ and at most like $4n$ as $n$ goes to infinity. If $\Sigma$ is an unpunctured M{\"o}bius strip, $\mathcal{MF}(\Sigma_n)$ coincides with $\mathcal{F}(\Sigma_n)$ and we prove that the diameter of this graph grows exactly like $5n/2$ as $n$ goes to infinity.
\end{abstract}


\maketitle

\section{Introduction}\label{NOFG.sec.1}

Consider a finite-type topological surface $\Sigma$. We assume that $\Sigma$ has at least one boundary component, and among these, we choose one of them to be the \emph{privileged} boundary component of $\Sigma$. Let us now select a finite subset $\mathcal{P}$ of points from $\Sigma$ in such a way that each boundary component of $\Sigma$ contains at least one point from $\mathcal{P}$. We refer to the points in $\mathcal{P}$ that belong to the interior of $\Sigma$ as \emph{punctures} and to all other points in $\mathcal{P}$ as \emph{boundary points}. We denote by $n$ the number of points from $\mathcal{P}$ on the privileged boundary component and by $\Sigma_n$ the resulting surface equipped with these points. Here, we think of $\Sigma_n$ as a surface where $n$ can vary but whose topology (genus, number of boundary components, orientability) and the points in $\mathcal{P}$ that do not belong to the privileged boundary are otherwise fixed. This is called a \emph{filling surface} in~\cite{ParlierPournin2017} because such a surface fills the privileged boundary component.

A triangulation $T$ of $\Sigma_n$ is an inclusion-wise maximal set of pairwise non-crossing and non-homotopic simple arcs in $\Sigma_n$ between points in $\mathcal{P}$, considered up to isotopy. Even though $T$ is not a necessarily triangulation in the simplicial sense (see Figure \ref{NOFG.sec.1.fig.1} for example), the assumption that it is inclusion-wise maximal implies that the arcs in $T$ decompose $\Sigma_n$ into triangles: removing these arcs from $\Sigma$ results in a collection of open disks bounded by exactly three arcs in $T$, which we think of as triangles. 
\begin{figure}
\begin{centering}
\includegraphics[scale=1]{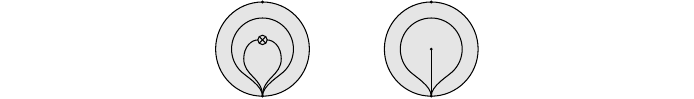}
\caption{A triangulation of the M{\"o}bius strip with two marked points in the boundary (shown in the cross-cap model of the M{\"o}bius strip) and a triangulation of the once-punctured disk with two marked points in the boundary.}\label{NOFG.sec.1.fig.1}
\end{centering}
\end{figure}
The set of the triangulations of $\Sigma_n$ can be given a structure as follows. Consider a triangulation $T$ of $\Sigma_n$ and an arc $\alpha$ in $T$ that is incident to two distinct triangles of $T$. Replacing $\alpha$ in $T$ with the other diagonal $\alpha'$ of the quadrilateral obtained by gluing these two triangles results in a different triangulation $T'$ of $\Sigma_n$. The move from $T$ to $T'$ is called a \emph{flip}. Equivalently, we say that $T'$ is obtained from $T$ by flipping $\alpha$. When an arc does not bound two distinct triangles, it cannot be flipped; this happens only when a loop arc bounds a punctured disk or a M{\"o}bius strip without punctures as shown in Figure \ref{NOFG.sec.1.fig.1}: the unique arc surrounded by the loop is contained in the boundary of a single triangle. Note that flips are reversible operations as one recovers $T$ by flipping $\alpha'$ in $T'$. This operation allows to consider the \emph{flip-graph} $\mathcal{F}(\Sigma_n)$ of $\Sigma_n$: the vertices of this graph are the triangulations of $\Sigma_n$ and there is an edge of $\mathcal{F}(\Sigma_n)$ between two triangulations when these triangulations can be changed into one another by a single flip.

The geometry of $\mathcal{F}(\Sigma_n)$ is particularly interesting because, thanks to the Schwarz--Milnor lemma, it is a quasi-isometric model for the mapping class group of $\Sigma_n$ \cite{DisarloParlier2019}. In the case when $\Sigma$ is a disk without punctures, the mapping class group of $\Sigma_n$ is trivial but the geometry of $\mathcal{F}(\Sigma_n)$ has been intensively studied \cite{AddarioBerryReedScottWood2018,Pournin2014,Pournin2019,PourninWang2021,SleatorTarjanThurston1988} because in this special case, $\mathcal{F}(\Sigma_n)$ is the $1$\nobreakdash-skeleton of the associahedron \cite{Lee1989,Stasheff1963a,Stasheff1963b,Tamari1951}, a polytope that appears in many areas of science. When $\Sigma$ is an arbitrary surface, it is known that $\mathcal{F}(\Sigma_n)$ is always connected~\cite{Bell2021,DisarloParlier2019,Hatcher1991,Mosher1995} and when $\Sigma$ is orientable, asymptotic estimates are known for the number of geodesic paths in $\mathcal{F}(\Sigma_n)$ between two triangulations \cite{ParlierPournin2025} but this graph is infinite except for a few surfaces $\Sigma$. However, considering the group $\mathrm{Mod}(\Sigma_n)$ of the homeomorphisms up to isotopy of $\Sigma_n$ that preserve the privileged boundary component pointwise
, the quotient
$$
\mathcal{MF}(\Sigma_n)=\quotient{\mathcal{F}(\Sigma_n)}{\mathrm{Mod}(\Sigma_n)}
$$
is a finite, connected flip-graph, the \emph{modular flip-graph} of $\Sigma_n$ whose geometry is also interesting \cite{DisarloParlier2019,ParlierPournin2017}. 
For instance, the diameter of $\mathcal{MF}(\Sigma_n)$ helps quantify the quasi-isometry that we mention above \cite{DisarloParlier2019}. 

It is shown in \cite{ParlierPournin2017} that, for any orientable filling surface $\Sigma$ such that the topology of $\Sigma$ and the points of $\mathcal{P}$ that do not belong to the privileged boundary component are fixed, there exists a constant $c_\Sigma$ satisfying
\begin{equation}\label{NOFG.sec.1.eq.1}
\lim_{n\rightarrow\infty}\frac{\mathrm{diam}(\mathcal{MF}(\Sigma_n))}{n}=c_\Sigma\mbox{.}
\end{equation}

Bounds on $c_\Sigma$ are given in~\cite{ParlierPournin2017,ParlierPournin2018a}. In particular,
$$
2\leq{c_\Sigma}\leq4
$$
for every orientable filling surface $\Sigma$. The lower bound is sharp as $c_\Sigma$ is equal to $2$ when $\Sigma$ is a disk or a once-punctured disk~\cite{ParlierPournin2018b,Pournin2014}. Moreover, it is shown in~\cite{ParlierPournin2017} that $c_\Sigma$ is equal to $5/2$ when $\Sigma$ is a cylinder without punctures, and to $3$ when $\Sigma$ is a three-holed sphere without punctures.

An interesting subclass of the filling surfaces is formed by the \emph{one-holed surfaces}, filling surfaces without punctures and a single boundary component (that necessarily serves as the privileged boundary component). If $\Sigma$ is a genus $g$, orientable one-holed surface, then it is known that 
$$
c_\Sigma\leq4-\frac{1}{4g}
$$
when $g$ is at least $2$ and that $c_\Sigma$ is at most $23/8$ when $g$ is equal to $1$ \cite{ParlierPournin2018a}.

The first purpose of this article is to extend a number of the results found in \cite{ParlierPournin2017,ParlierPournin2018a} to the non-orientable case. We will see in particular that if $\Sigma$ is a non-orientable filling surface, then there still exists a constant $c_\Sigma$ satisfying (\ref{NOFG.sec.1.eq.1}) and that this constant can be bounded as follows. 

\begin{thm}\label{NOFG.sec.1.thm.1}
If $\Sigma$ is a non-orientable filling surface, then
$$
\frac{5}{2}\leq{c_\Sigma}\leq4\mbox{.}
$$

Moreover, if $\Sigma$ is a demigenus $g$, non-orientable one-holed surface, then 
$$
c_\Sigma\leq4-\frac{1}{2g}
$$
when $g$ is at least $3$ and $c_\Sigma$ is at most $23/8$ when $g$ is equal to $2$.
\end{thm}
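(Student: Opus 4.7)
My plan is to establish each bound in \thmref{NOFG.sec.1.thm.1} separately, adapting and extending the techniques from the orientable case treated in \cite{ParlierPournin2017,ParlierPournin2018a}.

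For the upper bound $c_\Sigma \leq 4$, I would exhibit a \emph{hub} triangulation $T_\star$ of $\Sigma_n$ together with an algorithm transforming any other triangulation $T$ of $\Sigma_n$ into $T_\star$ using at most $2n + O(1)$ flips, where the implicit constant depends only on the topology of $\Sigma$ and the points of $\mathcal{P}$ outside the privileged boundary. The triangle inequality then yields $\mathrm{diam}(\mathcal{MF}(\Sigma_n)) \leq 4n + O(1)$ and hence $c_\Sigma \leq 4$. A natural candidate for $T_\star$ is a triangulation whose arcs incident to the privileged boundary form a fan at a single vertex, together with a fixed triangulation of the complementary region. The main technical subtlety is handling loop arcs that bound a M\"obius strip without punctures: such arcs cannot be flipped, as noted above. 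One must therefore either avoid creating such unflippable arcs during the reduction or perform preparatory flips elsewhere to bypass them.

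For the lower bound $c_\Sigma \geq 5/2$, my strategy is to exploit the fact that every non-orientable filling surface contains an embedded M\"obius strip. I would construct a pair of triangulations $T, T'$ that agree outside such a M\"obius neighborhood and differ inside by a rotational shift of the triangulation around the core one-sided curve. A combinatorial potential function, typically counting weighted intersections with a carefully chosen reference multi-arc, should decrease by at most $2/5$ on average per flip, forcing the flip distance between $T$ and $T'$ to grow like $5n/2$. This bound is tight for the unpunctured M\"obius strip, where it will be shown separately that $c_\Sigma = 5/2$.

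For the refined bounds on demigenus $g$ one-holed surfaces, the idea is to sharpen the hub construction by taking advantage of having a single boundary component. The surface can be decomposed along a collection of disjoint one-sided simple closed curves whose number is controlled by $g$, and the hub triangulation can be chosen compatibly with this decomposition. Amortizing the cost of the flips needed to reach $T_\star$ over these $g$ cross-caps gives the savings factor $1/(2g)$ reflected in the bound $4 - 1/(2g)$. The case $g = 2$ offers less room for amortization and must be handled by a direct case analysis yielding $23/8$, mirroring the treatment of genus one in the orientable setting. The main obstacle I anticipate is the lower bound: one must design a potential function that tracks flip distance tightly while behaving robustly in the presence of one-sided arcs and loops bounding cross-caps, since the orientable arguments do not transfer directly because orientation-reversing configurations can cause naive intersection counts to drop abruptly under a single flip.
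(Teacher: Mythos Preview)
Your upper-bound strategy via a hub triangulation is a workable alternative, though it is not what the paper does. The paper proceeds by induction on $n$: since a triangulation of $\Sigma_n$ has $n+O(1)$ interior arcs, an averaging argument produces a vertex $v$ on the privileged boundary with combined interior degree at most $4$ in $T^-$ and $T^+$; then \cite[Lemma~3.2]{ParlierPournin2017} (which carries over verbatim to non-orientable $\Sigma$) makes $v$ an ear in both with at most four flips, yielding $\mathrm{diam}(\mathcal{MF}(\Sigma_n))\leq\mathrm{diam}(\mathcal{MF}(\Sigma_{n-1}))+4$. This sidesteps the unflippable-arc issue you flag.

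The lower bound is where your proposal has a genuine gap. The paper does not use a potential function or intersection count; it constructs explicit pairs $A_n^-,A_n^+$ from a zigzag with loop arcs at opposite ends surrounding the topology, and bounds their distance recursively via \emph{boundary arc contractions}: if $f$ flips along a geodesic are incident to a boundary arc $\alpha$, then $d(T^-,T^+)\geq d(T^-\contract\alpha,T^+\contract\alpha)+f$. A delicate case analysis on how many flips are incident to three specific boundary arcs $\alpha,\beta,\gamma$, and on which arc the first flip incident to $\alpha$ introduces, gives recursions that solve to $\lfloor 5n/2\rfloor-2$. Your ``decreases by at most $2/5$ on average per flip'' claim is not a known technique here, and designing such a potential that behaves well under flips near one-sided arcs is precisely the difficulty; as stated, this part is a hope rather than a plan.

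For the one-holed refinements, the paper does not decompose along one-sided simple closed curves. It cuts $\Sigma_n$ along $g$ \emph{arcs} of $T^\pm$ to obtain disks $\Delta^\pm_{n+2g}$, flips to a fan at a vertex $v$ chosen in a run of $\lceil n/(4g)\rceil$ boundary arcs lying consecutively in \emph{both} disk boundaries, and then bounds the distance between the resulting fan triangulations directly. The $1/(2g)$ saving comes from this common run, and the $23/8$ for $g=2$ from the extra fact that a homeomorphism in $\mathrm{Mod}(\Sigma_n)$ can match the two cutting arcs; neither arises from ``amortizing over cross-caps'' as you describe.
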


In the statement of Theorem \ref{NOFG.sec.1.thm.1}, the \emph{demigenus} of a non-orientable one-holed surface $\Sigma$ refers to the number of cross-caps that need to be inserted within a topological disk in order to recover $\Sigma$ or, equivalently, to the least number of arcs that need to be removed from $\Sigma$ in order to transform it into a disk.

The M{\"o}bius strip $\mathrm{M}$ without punctures has a single boundary and it is the simplest example of a non-orientable one-holed (and therefore filling) surface. Recall that all the homeomorphisms of $\mathrm{M}$ that preserve the boundary pointwise coincide up to isotopy. In other words, the (pure) mapping class group of $\mathrm{M}$ is trivial and as a consequence, $\mathcal{MF}(\mathrm{M}_n)$ coincides with $\mathcal{F}(\mathrm{M}_n)$. The second purpose of this article is to estimate the diameter of $\mathcal{F}(\mathrm{M}_n)$ and to show that the lower bound on $c_\Sigma$ stated by Theorem \ref{NOFG.sec.1.thm.1} is achieved by $\mathrm{M}$.

\begin{thm}\label{NOFG.sec.1.thm.2}
For every positive $n$,
$$
\biggl\lfloor\frac{5}{2}n\biggr\rfloor-2\leq\mathrm{diam}(\mathcal{F}(\mathrm{M}_n))\leq\biggl\lfloor\frac{5}{2}n\biggr\rfloor\mbox{.}
$$
\end{thm}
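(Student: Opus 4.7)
The plan is to establish the upper and lower bounds separately. For the upper bound, I would fix a canonical triangulation $T^*$ of $\mathrm{M}_n$ -- for instance, one containing a one-sided loop arc $\alpha$ based at some boundary marked point $p$ together with a fan from $p$ filling out the disk side of $\alpha$ -- and show that any triangulation $T$ lies within flip-distance at most $\lfloor 5n/4 \rfloor$ of $T^*$, so that the triangle inequality yields the stated upper bound. The key topological fact is that cutting $\mathrm{M}_n$ along $\alpha$ produces a topological disk with roughly $n+2$ boundary marked points, whose flip-graph is the $1$-skeleton of an associahedron and therefore has diameter at most $2n - O(1)$ by Sleator--Tarjan--Thurston. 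To reach $T^*$ from an arbitrary $T$, one first brings $\alpha$ into the triangulation by a sweeping argument -- iteratively flipping arcs of $T$ that cross $\alpha$, each such flip reducing the geometric intersection number with $\alpha$ -- using at most roughly $n/4$ flips. Once $\alpha$ is present, the remaining reduction occurs inside the cut-open disk and costs at most about $n$ additional flips, giving the required bound after careful book-keeping of additive constants and parity.

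For the lower bound, I would construct two specific triangulations $T_1$ and $T_2$ whose flip-distance is at least $\lfloor 5n/2 \rfloor - 2$. A natural candidate takes $T_1$ to be a fan triangulation based at a marked point $p$ together with a one-sided loop at $p$, and $T_2$ to be a ``twisted'' variant of $T_1$ obtained by winding the arcs once around the core of $\mathrm{M}$. The proof then relies on defining a combinatorial invariant $\phi$ on triangulations, typically a weighted count of intersections with a fixed reference multi-arc, that changes by at most a bounded amount under any single flip while satisfying $\phi(T_1) - \phi(T_2) \geq \lfloor 5n/2 \rfloor - 2$. A careful choice of weights -- possibly assigning different costs to flips that interact with the non-orientable topology of $\mathrm{M}$ -- is needed to prevent ``shortcut'' flips from defeating the bound.

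The main difficulty will lie in the lower bound. Standard intersection-based arguments typically yield only an asymptotic statement of the form $\mathrm{diam}(\mathcal{F}(\mathrm{M}_n)) \geq (5/2 - o(1))n$, while Theorem~\ref{NOFG.sec.1.thm.2} demands the sharper additive form $\lfloor 5n/2 \rfloor - 2$. Attaining this precision requires a very careful analysis of which flips can simultaneously achieve the maximum admissible decrease of $\phi$, and in particular one must rule out unexpected shortcuts coming from the one-sided loop. The upper bound, by contrast, should reduce -- modulo the sweeping step -- to the well-understood case of the disk, but the tight floor constant still demands attention to parity and small-$n$ edge cases.
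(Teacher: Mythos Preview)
Both halves of your plan have genuine gaps.

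\textbf{Upper bound.} Your key numerical claim---that sweeping in the one-sided loop $\alpha$ costs about $n/4$ flips---is unjustified, and in fact fails. The number of flips needed to introduce $\alpha$ is governed by the geometric intersection number $i(\alpha,T)$, and a triangulation of $\mathrm{M}_n$ has $n$ interior arcs, each of which may cross $\alpha$; so $i(\alpha,T)$ can be of order $n$, not $n/4$. With sweeping costing up to $n$ flips and fanning in the disk another $n-1$, your route through a single fixed $T^*$ only yields $\mathrm{diam}(\mathcal{F}(\mathrm{M}_n))\le 4n$, far from $\lfloor 5n/2\rfloor$. The paper does \emph{not} go through a single canonical triangulation. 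It first proves a structural fact (every triangulation of $\mathrm{M}_n$ contains a \emph{central triangle}, one bounded by a non-separating arc and whose edge-lengths sum to $n$), which guarantees that $T^-$ already contains a non-separating arc of length at most $n/2$ incident to some vertex $u$. It then uses a whole family $C_u(v,w)$ of fan-plus-loop triangulations, with $u$ chosen from the central triangle of $T^-$; the savings come from the length bound $\le n/2$ together with a tight estimate on $d(C_u(v,w),C_u(v',w'))$. Adapting the intermediate target to the given pair $T^\pm$ is essential; a fixed $T^*$ cannot give the sharp constant.

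\textbf{Lower bound.} You already concede that an intersection-type potential $\phi$ usually gives only $(5/2-o(1))n$, and you do not propose a mechanism to close that gap. The paper uses a completely different technique: \emph{boundary arc contractions}. For a boundary arc $\alpha$ in the privileged boundary, it proves that if $f$ flips along a geodesic are incident to $\alpha$, then $d(T^-,T^+)\ge d(T^-\!\contract\alpha,\,T^+\!\contract\alpha)+f$. The lower bound is then obtained by exhibiting two explicit zigzag-type triangulations $A_n^-$ and $A_n^+$, and carrying out an induction on $n$: a case analysis of geodesics shows that enough flips must be incident to the boundary arcs $\alpha$, $\beta$, $\gamma$ near one end of the zigzag, so that contracting one or two of them reduces to $A_{n-1}^\pm$, $A_{n-2}^\pm$, or $A_{n-3}^\pm$ with the correct additive gain. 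This contraction-plus-induction argument is what delivers the exact constant $\lfloor 5n/2\rfloor-2$; no potential function is involved.
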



As mentioned above, the triangulations of $\mathrm{M}_n$ do not necessarily form simplicial complexes. In particular, the two endpoints of an arc can coincide, two distinct arcs can have the same pair of extremities, and two edges of a triangle can be formed by a single arc (see Figure~\ref{NOFG.sec.1.fig.1}). Paul Edelman and Victor Reiner consider the subgraph $\mathcal{F}_\star(\mathrm{M}_n)$ induced in $\mathcal{F}(\mathrm{M}_n)$ by the triangulations that form simplicial complexes and ask for its diameter in \cite{EdelmanReiner1997}. The third purpose of this article is to provide bounds on this diameter.
\begin{thm}\label{NOFG.sec.1.thm.3}
There exists a constant $K$ such that for all $n$ at least $5$, 
$$
\biggl\lfloor\frac{5}{2}n\biggr\rfloor-16\leq\mathrm{diam}(\mathcal{F}_\star(\mathrm{M}_n))\leq4n+K\mbox{.}
$$
\end{thm}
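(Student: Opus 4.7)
The plan is to treat the two inequalities separately, reusing the machinery developed for Theorems \ref{NOFG.sec.1.thm.1} and \ref{NOFG.sec.1.thm.2} and paying attention to the extra bookkeeping forced by the simplicial condition.

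For the lower bound, I would observe that $\mathcal{F}_\star(\mathrm{M}_n)$ is an induced subgraph of $\mathcal{F}(\mathrm{M}_n)$, so any path in $\mathcal{F}_\star(\mathrm{M}_n)$ is also a path in $\mathcal{F}(\mathrm{M}_n)$ and distances only go up when we restrict. It therefore suffices to exhibit two triangulations $T, T' \in \mathcal{F}_\star(\mathrm{M}_n)$ that are already at distance at least $\lfloor 5n/2\rfloor-16$ in $\mathcal{F}(\mathrm{M}_n)$. The natural candidates are the extremal pair of triangulations used to establish the lower bound in Theorem \ref{NOFG.sec.1.thm.2}. If those witnesses happen to contain loops or double arcs, I would modify each of them in a bounded neighborhood by a constant number of local flips to remove the non-simplicial features, and then invoke the triangle inequality together with the lower bound from Theorem \ref{NOFG.sec.1.thm.2} to absorb the $O(1)$ correction into the additive $-16$.

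For the upper bound, I would imitate the proof of the $c_\Sigma\leq 4$ upper bound used in Theorem \ref{NOFG.sec.1.thm.1}, which fixes a reference triangulation $T_0$ and shows that every triangulation of $\mathrm{M}_n$ can be flipped to $T_0$ in at most $2n+O(1)$ moves; the triangle inequality then yields $4n+K$. The task is to redo this reduction while staying inside $\mathcal{F}_\star(\mathrm{M}_n)$. I would pick $T_0$ to be a simple explicit simplicial triangulation (for instance a fan from one boundary point, completed by a small gadget around the core of the M\"obius strip) and, at each step of the reduction, identify a boundary triangle of the current simplicial triangulation $T$ whose removal makes progress toward $T_0$, flipping one of its arcs only when the resulting triangulation is still simplicial. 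When the obvious candidate flip would create a loop or a pair of arcs with the same two endpoints, I would instead perform a short bounded sequence of detour flips that temporarily rearranges the local picture, then completes the intended reduction. Counting these detours gives only an additive constant overhead, hence the $+K$ in the statement.

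The main obstacle will be the upper bound argument, specifically showing that the reduction procedure can always be executed within the simplicial subgraph. In $\mathrm{M}_n$, non-simplicial configurations arise from two very specific sources: a loop arc bounding a M\"obius band, and two arcs sharing both endpoints while cobounding a disk. Any systematic flip strategy risks producing exactly these configurations when one flips across a near-loop. I expect the bulk of the technical work to be a finite case analysis of these local situations, together with an explicit bounded replacement move that each time restores simpliciality without changing the ambient reduction progress by more than a constant. Once such a replacement is available, combining it with the $2n$-flip reduction scheme from the proof of Theorem \ref{NOFG.sec.1.thm.1} and applying it at both endpoints yields the desired bound $4n+K$.
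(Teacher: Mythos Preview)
Your lower-bound plan is essentially the paper's: it proves that every triangulation of $\mathrm{M}_n$ is at most seven flips (in $\mathcal{F}(\mathrm{M}_n)$) from a simplicial one, applies this to the extremal pair from Theorem~\ref{NOFG.sec.1.thm.2}, and absorbs the $14$ into the additive constant via the triangle inequality and the inequality $d_{\mathcal{F}_\star}\geq d_{\mathcal{F}}$.

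Your upper-bound plan, however, rests on a misreading and contains a real gap. The $c_\Sigma\leq 4$ bound in Theorem~\ref{NOFG.sec.1.thm.1} is \emph{not} proved by flipping everything to a fixed reference $T_0$ in $2n+O(1)$ moves; it is proved by the ear-removal induction of Theorem~\ref{NOFG.sec.2.thm.2}: find a boundary vertex $v$ whose combined interior degree in $T^-$ and $T^+$ is at most $4$, spend at most four flips to make $v$ an ear of both, and recurse on $\Sigma_{n-1}$. The paper's proof for $\mathcal{F}_\star(\mathrm{M}_n)$ is a refinement of exactly this scheme. One cuts each of $T^\pm$ along a non-separating arc to obtain disk triangulations $U^\pm$, and observes that the only way a star-flip at $u$ can produce a loop or a double arc is if some boundary arc of $\mathrm{star}_{U^\pm}(u)$ joins two copies of the cut endpoints $a^\pm,b^\pm$ (or joins $u$ to one of them). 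At most eight vertices on each side are ``unsafe'' in this sense, so for $n$ large enough one can still average and find a safe $u$ of combined degree at most $4$; flipping the (at most two) arcs of $\mathrm{star}_{U^\pm}(u)$ incident to $a^\pm$ or $b^\pm$ first guarantees every intermediate triangulation stays simplicial. This yields $\mathrm{diam}(\mathcal{F}_\star(\mathrm{M}_n))\leq \mathrm{diam}(\mathcal{F}_\star(\mathrm{M}_{n-1}))+4$.

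By contrast, your reference-triangulation scheme performs on the order of $n$ flips toward a fan, any of which might create a double arc with the non-separating cut arc or with an arc already present near the core gadget. You assert that ``counting these detours gives only an additive constant overhead,'' but nothing in your outline forces the dangerous flips to be $O(1)$ in number rather than $O(n)$; a priori each of the $\sim n$ fanning flips could require a bounded detour, which would only give a bound of the form $Cn$ with some $C>4$. Making your approach work would require an analogue of the ``safe vertex'' count above to localize the bad flips near the cut endpoints, at which point you are essentially reconstructing the paper's argument anyway.
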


We ask the following question.

\begin{qtn}
Consider an orientable or non-orientable one-holed surface $\Sigma$. If $\Sigma$ is orientable, is $c_\Sigma$ a monotonic function of the genus of $\Sigma$ and if $\Sigma$ is non-orientable, is $c_\Sigma$ a monotonic function of the demigenus of $\Sigma$?
\end{qtn}

The article is organized as follows. In Section~\ref{NOFG.sec.2}, we explain how the existence of $c_\Sigma$ and the upper bounds on that number can be proven in the non-orientable case. The argument is similar to the one in \cite{ParlierPournin2017,ParlierPournin2018a} and only the differences are highlighted. In Section~\ref{NOFG.sec.3}, we prove a general lower bound on the diameter on $\mathcal{MF}(\Sigma_n)$ when $\Sigma$ is a filling surface that is either non-orientable or orientable but of positive genus (see Theorem~\ref{NOFG.sec.3.thm.1}). While we use the same tools as in~\cite{ParlierPournin2018a}, the proof is significantly more general. This bound implies both the lower bound on $c_\Sigma$ from Theorem \ref{NOFG.sec.1.thm.1} and the lower bound on $\mathrm{diam}(\mathcal{F}(\mathrm{M}_n))$ from Theorem~\ref{NOFG.sec.1.thm.2}. In Section~\ref{NOFG.sec.4}, we establish a structural property of the triangulations of $\mathrm{M}_n$ and use it to show the upper bound on $\mathrm{diam}(\mathcal{F}(\mathrm{M}_n))$ from Theorem~\ref{NOFG.sec.1.thm.2}. Finally, we prove Theorem~\ref{NOFG.sec.1.thm.3} in Section~\ref{NOFG.sec.5}.

\section{From orientable to non-orientable surfaces}\label{NOFG.sec.2}

In this section, we explain how the proofs from \cite{ParlierPournin2017,ParlierPournin2018a} can be adapted to the non-orientable case. We first recall some notation and terminology. A path in $\mathcal{F}(\Sigma_n)$ from a triangulation $T^-$ to a triangulation $T^+$ will be represented as the sequence $(T_i)_{0\leq{i}\leq{k}}$ of triangulations of $\Sigma_n$ along it. In particular, $T_0$ coincides with $T^-$ and $T_k$ with $T^+$. Moreover, when $1\leq{i}\leq{k}$, triangulations $T_{i-1}$ and $T_i$ are related by a flip. This path has length $k$ but there may be paths of different lengths between $T^-$ to $T^+$. The \emph{geodesic paths} (or for short the \emph{geodesics}) between $T^-$ and $T^+$ in $\mathcal{F}(\Sigma_n)$ will be the ones whose length is the distance of these triangulations in $\mathcal{F}(\Sigma_n)$. We will use the same notation and terminology for the paths in $\mathcal{MF}(\Sigma_n)$, the only difference being that the triangulations are considered up to the homeomorphisms in $\mathrm{Mod}(\Sigma_n)$. We denote the distance of $T^-$ and $T^+$ in both flip-graphs by $d(T^-,T^+)$ as it will always be clear from the context which flip-graph this distance is measured in.

The following strong convexity property of $\mathcal{F}(\Sigma_n)$ is proven in \cite{DisarloParlier2019} (see Theorem 1.1 therein) in the case when $\Sigma$ is orientable.

\begin{thm}\label{NOFG.sec.2.thm.1}
Consider an orientable filling surface $\Sigma$ and two triangulations $T^-$ and $T^+$ of $\Sigma_n$. The arcs common to $T^-$ and $T^+$ are contained in every triangulation along every geodesic between $T^-$ and $T^+$ in $\mathcal{F}(\Sigma_n)$.  
\end{thm}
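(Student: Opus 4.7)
My plan is to argue by contradiction. Suppose there is a geodesic $(T_i)_{0\leq i\leq k}$ in $\mathcal{F}(\Sigma_n)$ between $T^-$ and $T^+$ and a common arc $\alpha\in T^-\cap T^+$ such that $\alpha\notin T_j$ for some intermediate $j$. The goal is to exhibit a strictly shorter path from $T^-$ to $T^+$. After restricting to the first ``excursion'' of $\alpha$---namely, replacing $(T_i)$ by a subpath $(T_{j_0},\ldots,T_{j_1})$ where $j_0$ is the last index before $j$ with $\alpha\in T_{j_0}$ and $j_1$ is the first index after $j$ with $\alpha\in T_{j_1}$---I may assume without loss of generality that $\alpha\in T_0\cap T_k$ while $\alpha\notin T_i$ for $0<i<k$. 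The flip from $T_0$ to $T_1$ then removes $\alpha$, replacing it by the other diagonal $\alpha^\sharp$ of some quadrilateral in $T_0$, and the flip from $T_{k-1}$ to $T_k$ reintroduces $\alpha$, replacing some arc $\beta$ in $T_{k-1}$.

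The natural setting is the (possibly disconnected) filling surface $\Sigma^\flat$ obtained by cutting $\Sigma_n$ along $\alpha$. Triangulations of $\Sigma_n$ that contain $\alpha$ restrict to triangulations of $\Sigma^\flat$, and conversely any path in $\mathcal{F}(\Sigma^\flat)$ lifts to a path of the same length in $\mathcal{F}(\Sigma_n)$ between the corresponding triangulations containing $\alpha$. It therefore suffices to produce, from the sequence $(T_0,T_1,\ldots,T_k)$, a path in $\mathcal{F}(\Sigma^\flat)$ from the restriction $T_0^\flat$ to $T_k^\flat$ of length at most $k-2$. To each intermediate triangulation $T_i$ with $0<i<k$ I associate a triangulation $S_i$ of $\Sigma^\flat$ defined as follows: keep every arc of $T_i$ disjoint from $\alpha$; replace each arc $\gamma$ in $T_i$ that crosses $\alpha$ by the fragments obtained by cutting $\gamma$ at each transverse intersection with $\alpha$ and pushing each fragment off $\alpha$ to one of its endpoints, following the two sides of $\alpha$, which are globally distinguishable because $\Sigma$ is orientable; finally, complete this arc system to a triangulation of $\Sigma^\flat$.

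The heart of the proof is the verification of two facts: first, $S_1=T_0^\flat$ and $S_{k-1}=T_k^\flat$; second, for each $1\leq i<k-1$ either $S_i=S_{i+1}$ or $S_i$ and $S_{i+1}$ are adjacent in $\mathcal{F}(\Sigma^\flat)$. The former holds because the flip $T_0\to T_1$ exchanges $\alpha$ with $\alpha^\sharp$, and smoothing $\alpha^\sharp$ along the two sides of $\alpha$ recovers exactly the two triangles of $T_0$ incident to $\alpha$; the analogous statement holds at the other end. The latter reduces to a case analysis on the flip $T_i\to T_{i+1}$: if neither the removed nor the added arc crosses $\alpha$ the flip descends unchanged to $\Sigma^\flat$; if both cross $\alpha$ the flip is absorbed by the smoothing, yielding $S_i=S_{i+1}$; and if exactly one of them crosses $\alpha$ then the smoothed quadrilateral shifts but $S_i$ and $S_{i+1}$ still differ by a single flip. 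Combining these facts gives a path from $T_0^\flat$ to $T_k^\flat$ in $\mathcal{F}(\Sigma^\flat)$ of length at most $k-2$, lifting to the required shorter path in $\mathcal{F}(\Sigma_n)$.

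The main obstacle will be making the smoothing construction well defined and the case analysis clean, and this is where orientability is essential: because $\Sigma$ is orientable, $\alpha$ admits two globally distinguishable sides, and each fragment of a cut arc has an unambiguous destination among the two endpoints of $\alpha$. In the non-orientable case, an arc may cross $\alpha$ from the same local side twice, so the smoothing ceases to be canonical; this is precisely why Theorem~\ref{NOFG.sec.2.thm.1} is restricted to orientable $\Sigma$ and why the present paper must develop genuinely different tools. A secondary technical point is verifying that the completion to a triangulation in the definition of $S_i$ can be chosen compatibly between consecutive indices; I expect this to reduce to a local analysis in a neighbourhood of $\alpha$ and to cause no serious difficulty once the canonical smoothing has been set up.
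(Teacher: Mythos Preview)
The paper does not prove this theorem itself; it is quoted as Theorem~1.1 of \cite{DisarloParlier2019}. So there is no in-paper argument to compare your proposal against.

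Your overall strategy---cut along the common arc $\alpha$, project each triangulation along the geodesic to a triangulation of the cut surface $\Sigma^\flat$, and show this projection does not increase length---is sound and is in the spirit of the retraction argument in \cite{DisarloParlier2019}. The two places where your sketch is incomplete are not minor, however.

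First, the smoothing rule is underspecified. Knowing which side of $\alpha$ a fragment lies on tells you which boundary copy of $\alpha$ in $\Sigma^\flat$ carries its cut endpoint; it does not tell you which of the two vertices of $\alpha$ to slide that endpoint to. One can fix a convention (orient $\alpha$ and send left-side cut points to the initial vertex, right-side cut points to the terminal vertex, say), and with that convention the endpoint claims $S_1=T_0^\flat$ and $S_{k-1}=T_k^\flat$ do check out.

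Second, and more seriously, the three-case flip analysis is asserted rather than proven, and it is the whole content of the theorem. When $\alpha$ meets the flipped quadrilateral in a single chord the verification is a pleasant exercise, but once $\alpha$ crosses that quadrilateral several times, fragments of a diagonal lying between two consecutive intersections become, after sliding, loops based at an endpoint of $\alpha$; whether these loops are trivial, how many independent arcs survive the surgery, and whether the completions at step $i$ and step $i+1$ can be chosen to differ by at most one flip is precisely what must be verified. You label the compatibility of completions a ``secondary technical point''; it is not secondary---it is the proof.
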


While this strong convexity property fails in general in the case of the modular flip-graph as pointed out in \cite{ParlierPournin2017}, it remains true for \emph{certain} of the arcs common to two triangulations of $\Sigma_n$. We call an arc $\alpha$ in $\Sigma_n$ \emph{parallel to the privileged boundary} when $\Sigma_n \mathord{\setminus}\alpha$ has two connected components and one of them is an unpunctured disk. Since the homemorphisms in $\mathrm{Mod}(\Sigma_n)$ fix the privileged boundary pointwise, the arcs parallel to the privileged boundary are invariant under all of these homeomorphisms and that strong convexity property stated by Theorem~\ref{NOFG.sec.2.thm.1} carries over to $\mathcal{MF}(\Sigma_n)$ for these arcs. This can be alternatively recovered using the proof of \cite[Lemma~3]{SleatorTarjanThurston1988} almost word for word, precisely because the homeomorphisms in $\mathrm{Mod}(\Sigma_n)$ fix the arcs parallel to the privileged boundary. In fact, the proof of \cite[Lemma~3]{SleatorTarjanThurston1988} also works as is for these arcs when $\Sigma$ is non-orientable and we get the following.
\begin{thm}\label{NOFG.sec.2.cor.1}
Consider a possibly non-orientable filling surface $\Sigma$ and two triangulations $T^-$ and $T^+$ of $\Sigma_n$. The arcs parallel to the privileged boundary of $\Sigma$ and common to $T^-$ and $T^+$ are contained in every triangulation along every geodesic between $T^-$ and $T^+$ in $\mathcal{MF}(\Sigma_n)$.  
\end{thm}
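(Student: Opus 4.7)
First, the plan is to verify that being an arc parallel to the privileged boundary is well-defined on classes of triangulations modulo $\mathrm{Mod}(\Sigma_n)$, and then to adapt the Sleator--Tarjan--Thurston surgery from \cite{SleatorTarjanThurston1988} verbatim. The well-definedness follows from the observation that such an arc $\alpha$, together with a segment of the privileged boundary, bounds an unpunctured disk $D$ in $\Sigma_n$; since every element of $\mathrm{Mod}(\Sigma_n)$ fixes the privileged boundary pointwise, it must fix the two endpoints of $\alpha$ and map $D$ to itself, so it preserves $\alpha$ up to isotopy. Consequently, whether an arc parallel to the privileged boundary belongs to a given class of $\mathcal{MF}(\Sigma_n)$ is unambiguous, and the usual surgery based on $\alpha$ translates from $\mathcal{F}(\Sigma_n)$ to $\mathcal{MF}(\Sigma_n)$ without loss.

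Suppose for contradiction that $\alpha$ is common to $T^-$ and $T^+$ but that $\alpha$ is absent from $T_j$ for some $j$ along a geodesic $(T_0,\ldots,T_k)$ between them in $\mathcal{MF}(\Sigma_n)$. Let $i$ be the largest index less than $j$ with $\alpha\in T_i$, and $m$ the smallest index greater than $j$ with $\alpha\in T_m$. Then the flip from $T_i$ to $T_{i+1}$ removes $\alpha$ and replaces it with an arc that crosses $\alpha$, while the flip from $T_{m-1}$ to $T_m$ restores $\alpha$ by flipping some arc that again crosses $\alpha$. The core step is to construct a modified sub-path $(T'_i,T'_{i+1},\ldots,T'_m)$ in which $\alpha$ is present throughout, obtained by deleting, in each intermediate $T_\ell$, every arc crossing $\alpha$, adjoining $\alpha$, and retriangulating $D$ consistently (for instance using the triangulation of $D$ inherited from $T_i$). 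Two consecutive modified triangulations then differ by at most one flip, taking place entirely on the side $R := \Sigma_n \setminus D$ and corresponding to a flip of the original path that did not involve $\alpha$; crucially, the flips that removed and restored $\alpha$ become trivial moves. Hence the modified sub-path has length at most $m - i - 2$, producing a strictly shorter path from $T^-$ to $T^+$ in $\mathcal{MF}(\Sigma_n)$, contradicting the geodesicity of $(T_0,\ldots,T_k)$.

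The main obstacle is a careful verification that each step of the modified sub-path really corresponds to a single flip or a trivial move between classes of $\mathcal{MF}(\Sigma_n)$. This requires some book-keeping of which arcs of $T_\ell$ cross $\alpha$ and how the pieces reassemble on the $R$ side, together with the fact that every arc crossing $\alpha$ lies on a triangle of $T_\ell$ that has at least one edge contained in $R$, so that each flip of the original sub-path either already lives on the $R$ side or else can be projected to such a flip after the deletion of arcs crossing $\alpha$. Non-orientability plays no role in this verification, because the surgery uses only that $\alpha$ is separating and canonical, both of which follow from parallelism with the privileged boundary as established in the first paragraph. This is precisely why, as noted in the excerpt, the proof of \cite[Lemma~3]{SleatorTarjanThurston1988} carries over almost word for word.
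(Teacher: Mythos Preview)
Your proposal is correct and follows exactly the approach the paper indicates: observe that arcs parallel to the privileged boundary are $\mathrm{Mod}(\Sigma_n)$-invariant (hence well-defined in $\mathcal{MF}(\Sigma_n)$), and then invoke the surgery of \cite[Lemma~3]{SleatorTarjanThurston1988}, which is insensitive to orientability. One small imprecision worth tightening: after deleting the arcs of $T_\ell$ that cross $\alpha$ and adjoining $\alpha$, the $R$ side need not be fully triangulated either, so your completion must be specified on both sides (not only on $D$) for the ``at most one flip'' claim to go through---but this is precisely the book-keeping you already flag in your final paragraph.
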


An arc contained in a boundary component of a (possibly non-orientable) filling surface $\Sigma$ whose extremities are consecutive points of $\mathcal{P}$ in that boundary component will be called a \emph{boundary arc} of $\Sigma_n$. All the boundary arcs of $\Sigma_n$ are contained in every triangulation of $\Sigma_n$ and for these reason they will also be referred to as the boundary arcs of these triangulations. If $T$ is a triangulation of $\Sigma_n$, we will call every arc of $T$ that is not a boundary arc an \emph{interior arc} of $T$. An \emph{ear} of $T$ will be a vertex $v$ of $T$ that is not incident to any interior arc of $T$. If $v$ belongs to the privileged boundary, then the edge $\alpha$ of $t$ that is not incident to $v$ is parallel to the privileged boundary. In particular, according to Theorem~\ref{NOFG.sec.2.cor.1}, the triangulations of $\Sigma_n$ that admit $v$ as an ear induce a strongly convex subgraph of $\mathcal{MF}(\Sigma_n)$. By construction, this subgraph is a copy of $\mathcal{MF}(\Sigma_{n-1})$ and we immediately obtain the following statement.

\begin{prop}\label{NOFG.sec.2.prop.1}
Consider a possibly non-orientable filling surface $\Sigma$. If $n$ is at least $2$, then $\mathrm{diam}(\mathcal{MF}(\Sigma_n))\geq\mathrm{diam}(\mathcal{MF}(\Sigma_{n-1}))$
\end{prop}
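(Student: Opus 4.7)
The plan is to embed $\mathcal{MF}(\Sigma_{n-1})$ isometrically into $\mathcal{MF}(\Sigma_n)$ as a strongly convex subgraph. Once this is achieved, picking $\overline{T}^-$ and $\overline{T}^+$ in $\mathcal{MF}(\Sigma_{n-1})$ at distance $\mathrm{diam}(\mathcal{MF}(\Sigma_{n-1}))$ and transferring them across the embedding will yield two triangulations of $\Sigma_n$ at the same distance in $\mathcal{MF}(\Sigma_n)$, and the proposition follows.

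The embedding I would construct is the one sketched in the paragraph preceding the proposition. Fix a marked point $v$ on the privileged boundary of $\Sigma_n$ (any of the $n$ marked points works, but it is convenient to think of $v$ as the extra point that distinguishes $\Sigma_n$ from $\Sigma_{n-1}$), and define a map $\phi$ sending each triangulation $\overline{T}$ of $\Sigma_{n-1}$ to the triangulation of $\Sigma_n$ obtained by inserting at $v$ a single ear triangle whose third edge is an arc $\alpha$ parallel to the privileged boundary. This is clearly a bijection from the set of triangulations of $\Sigma_{n-1}$ to the set of triangulations of $\Sigma_n$ admitting $v$ as an ear, the inverse being the collapse of the ear triangle. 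I would then verify that $\phi$ sends flips to flips (any flip in $\overline{T}$ happens away from the ear, and $\alpha$ is unflippable within the image because flipping it would destroy the ear at $v$) and that it is equivariant with respect to the two modular groups.

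The core of the argument is then immediate from Theorem~\ref{NOFG.sec.2.cor.1}: the arc $\alpha$ is parallel to the privileged boundary and lies in every triangulation in the image of $\phi$, so any geodesic in $\mathcal{MF}(\Sigma_n)$ between two triangulations in that image passes only through triangulations containing $\alpha$, which are precisely the triangulations with $v$ as an ear. Hence the image of $\phi$ is a strongly convex subgraph, and $\phi$ is distance-preserving. The step I would be most careful about is the equivariance of $\phi$, namely checking that the quotient of the set of triangulations admitting $v$ as an ear by $\mathrm{Mod}(\Sigma_n)$ agrees with $\mathcal{MF}(\Sigma_{n-1})$. This should be straightforward because $v$ lies on the privileged boundary and is therefore pointwise fixed by every homeomorphism in $\mathrm{Mod}(\Sigma_n)$, so the homeomorphisms appearing in each quotient match naturally.
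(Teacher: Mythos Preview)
Your proposal is correct and follows essentially the same approach as the paper: fix a vertex $v$ on the privileged boundary, identify $\mathcal{MF}(\Sigma_{n-1})$ with the subgraph of $\mathcal{MF}(\Sigma_n)$ induced by triangulations admitting $v$ as an ear, and invoke Theorem~\ref{NOFG.sec.2.cor.1} on the arc $\alpha$ opposite $v$ to get strong convexity. The paper states this argument in the paragraph immediately preceding the proposition and then records the proposition without further proof.
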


According to \cite[Theorem 3.1]{ParlierPournin2017}, for any orientable filling surface $\Sigma$,
$$
\mathrm{diam}(\mathcal{MF}(\Sigma_n))\leq4n+K_\Sigma
$$
where $K_\Sigma$ is a constant that depends on $\Sigma$ but not on $n$. Together with the orientable case of Proposition \ref{NOFG.sec.2.prop.1} this yields the existence of the constant $c_\Sigma$ that satisfies (\ref{NOFG.sec.1.eq.1}). The proof of \cite[Theorem 3.1]{ParlierPournin2017} relies on the following lemma that is established in \cite{ParlierPournin2017} in the orientable case, but whose proof can be immediately transposed to the non-orientable case. For this reason, it will be omitted but note that a refined statement will be established in Section \ref{NOFG.sec.5} using the same strategy in order to upper bound the diameter of $\mathcal{F}_\star(\mathrm{M}_n)$.

\begin{thm}[{\cite[Lemma 3.2]{ParlierPournin2017}}]\label{NOFG.sec.2.thm.2}
Consider a possibly non-orientable filling surface $\Sigma$, two triangulations $T^-$ and $T^+$ of $\Sigma_n$, and a vertex $v$ of these triangulations in the privileged boundary of $\Sigma$. If $n$ is at least $2$ and the number of interior arcs incident to $v$ in $T^-$ and in $T^+$ sums to at most $4$, then there exist two triangulations $\widetilde{T}^-$ and $\widetilde{T}^+$ of $\Sigma_n$ admitting $v$ as an ear such that
$$
d(T^-,\widetilde{T}^-)+d(T^+,\widetilde{T}^+)\leq4
$$
where both distances are measured either in $\mathcal{F}(\Sigma_n)$ or in $\mathcal{MF}(\Sigma_n)$.
\end{thm}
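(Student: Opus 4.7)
Let $a^\pm$ denote the number of interior arcs of $T^\pm$ that are incident to $v$, so that $a^- + a^+ \leq 4$ by hypothesis. The plan is to argue, independently for $T^-$ and $T^+$, that there is a triangulation $\widetilde{T}^\pm$ with $v$ as an ear at distance at most $a^\pm$ in $\mathcal{F}(\Sigma_n)$, yielding the combined bound $a^- + a^+ \leq 4$; passing to $\mathcal{MF}(\Sigma_n)$ is then automatic because distances there are only smaller. I would carry out the reduction locally around $v$, exploiting the fact that all constructions happen inside the fan of triangles at $v$ and involve only arcs incident to $v$ together with their immediate neighbours.

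The core step is a local single-flip reduction. In a triangulation $T$ with $a \geq 1$ interior arcs at $v$, the boundary arcs at $v$ and the interior arcs at $v$ together delimit a fan of $a+1$ triangles. An interior arc $\alpha$ incident to $v$ can be written $\alpha = vq$ with neighbouring triangles $\{v,p,q\}$ and $\{v,q,r\}$; flipping $\alpha$ replaces it with the arc joining $p$ and $r$, and this arc is not incident to $v$ as soon as neither $p$ nor $r$ equals $v$. In that generic situation the number of interior arcs at $v$ drops by exactly one, and iterating $a$ times produces a triangulation with $v$ as an ear at distance $a$ from $T$. Applying the iteration to each side gives the required $\widetilde{T}^\pm$.

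The main obstacle will be the degenerate configurations in which a triangle of the fan already has $v$ as a repeated vertex (because an interior arc at $v$ is a loop cutting off a triangle of the form $\{v,v,w\}$), so that the flip of the corresponding $\alpha$ produces a new arc still incident to $v$. I would dispose of these cases with a short combinatorial analysis of the fan under the bound $a \leq 4$, showing either that an interior arc at $v$ with two non-degenerate adjacent triangles can always be selected and flipped first, or that in the few remaining configurations one can perform an auxiliary flip on a side of the degenerate triangle (which is not incident to $v$) to regularise the local picture, the total flip count to reach an ear still being absorbed within the $a^- + a^+ \leq 4$ budget. Since the whole argument depends only on the local combinatorics of the fan at a boundary vertex, it is insensitive to the orientability of $\Sigma$ and is precisely the adaptation announced in the paragraph preceding the statement, so that the proof from \cite{ParlierPournin2017} transposes to the present setting unchanged.
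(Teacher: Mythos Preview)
Your proposal is correct and takes essentially the same approach as the paper: the paper omits the proof entirely, noting that the argument of \cite[Lemma~3.2]{ParlierPournin2017} ``can be immediately transposed to the non-orientable case,'' and your sketch is precisely that fan-reduction argument together with the observation that it is purely local at the boundary vertex $v$ and hence insensitive to orientability. The only point worth tightening is that, when loops at $v$ occur, the count $a^\pm$ should be read as the number of arc-\emph{ends} (incidences) at $v$ rather than the number of distinct arcs, which is the reading used in the paper's pigeonhole applications and under which your ``one flip removes one incidence'' step goes through cleanly.
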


Now observe that, by a simple Euler characteristic argument, if $\kappa(\Sigma_n)$ denotes the number of interior arcs in a triangulation of $\Sigma_n$, then
$$
\lim_{n\rightarrow\infty}\frac{\kappa(\Sigma_n)}{n}=1\mbox{.}
$$

As a consequence, for any (possibly non-orientable) filling surface $\Sigma$, for any sufficiently large $n$ and any two triangulations $T^-$ and $T^+$ of $\Sigma_n$, there must exist a vertex $v$ of these triangulations in the privileged boundary of $\Sigma$ such that the number of interior arcs incident to $v$ in $T^-$ and in $T^+$ sums at most $4$. It therefore follows from Theorem \ref{NOFG.sec.2.thm.2} that for any sufficiently large $n$,
$$
\mathrm{diam}(\mathcal{MF}(\Sigma_n))\leq4+\mathrm{diam}(\mathcal{MF}(\Sigma_{n-1}))
$$
which immediately proves the following.
\begin{thm}\label{NOFG.sec.2.thm.3}
For any, possibly non-orientable, filling surface $\Sigma$ there exists a constant $K_\Sigma$ that depends on $\Sigma$ but not on $n$ such that for all positive $n$,
$$
\mathrm{diam}(\mathcal{MF}(\Sigma_n))\leq4n+K_\Sigma\mbox{.}
$$
\end{thm}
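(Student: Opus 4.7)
My plan is to establish the recurrence
$$
\mathrm{diam}(\mathcal{MF}(\Sigma_n))\leq4+\mathrm{diam}(\mathcal{MF}(\Sigma_{n-1}))
$$
for all sufficiently large $n$, then iterate it down to a threshold and absorb the finitely many small cases into the constant $K_\Sigma$. The essential new ingredient is the observation, already stated in the paragraphs preceding the theorem, that a vertex of low total interior-arc valence can always be located on the privileged boundary once $n$ is large.

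First I would fix two triangulations $T^-$ and $T^+$ of $\Sigma_n$ realizing the diameter and look for a privileged-boundary vertex at which the two triangulations use few interior arcs. A direct Euler characteristic calculation shows that $\kappa(\Sigma_n)=n+c_\Sigma$ for some constant $c_\Sigma$ depending only on the topology of $\Sigma$ and on the marked points lying outside the privileged boundary (this is insensitive to orientability). Since each interior arc contributes at most $2$ to the sum over boundary vertices of interior-arc valences, the total number of (interior arc, privileged-boundary vertex) incidences in $T^-$ and in $T^+$ combined is at most $4\kappa(\Sigma_n)=4n+4c_\Sigma$. For $n$ large enough the average over the $n$ privileged-boundary vertices is strictly less than $5$, so by pigeonhole some vertex $v$ has combined interior-arc valence at most $4$.

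Next I would invoke Theorem~\ref{NOFG.sec.2.thm.2} at this $v$ to produce triangulations $\widetilde{T}^-$ and $\widetilde{T}^+$ admitting $v$ as an ear with $d(T^-,\widetilde{T}^-)+d(T^+,\widetilde{T}^+)\leq 4$. The edge of each ear triangle opposite $v$ is an arc parallel to the privileged boundary, so by Theorem~\ref{NOFG.sec.2.cor.1} the triangulations of $\Sigma_n$ admitting $v$ as an ear form a strongly convex subgraph of $\mathcal{MF}(\Sigma_n)$ that is canonically isomorphic to $\mathcal{MF}(\Sigma_{n-1})$. Hence $d(\widetilde{T}^-,\widetilde{T}^+)\leq\mathrm{diam}(\mathcal{MF}(\Sigma_{n-1}))$ and the triangle inequality delivers the desired recurrence.

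Finally, iterating the recurrence from $n$ down to the threshold $N_\Sigma$ above which it is valid gives $\mathrm{diam}(\mathcal{MF}(\Sigma_n))\leq 4(n-N_\Sigma)+\mathrm{diam}(\mathcal{MF}(\Sigma_{N_\Sigma}))$ whenever $n\geq N_\Sigma$; for $n<N_\Sigma$ the graph $\mathcal{MF}(\Sigma_n)$ is finite (and its diameter is monotone in $n$ by Proposition~\ref{NOFG.sec.2.prop.1}), so one can enlarge the additive constant to absorb these finitely many small cases and conclude $\mathrm{diam}(\mathcal{MF}(\Sigma_n))\leq 4n+K_\Sigma$ for every positive $n$. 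There is no real obstacle here: non-orientability enters only through Theorem~\ref{NOFG.sec.2.cor.1} and Theorem~\ref{NOFG.sec.2.thm.2}, both of which are already in hand, and the Euler characteristic bookkeeping is identical in the orientable and non-orientable cases.
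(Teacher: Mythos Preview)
Your argument is correct and follows exactly the same route as the paper: locate a privileged-boundary vertex of combined interior-arc valence at most $4$ via the Euler-characteristic count, apply Theorem~\ref{NOFG.sec.2.thm.2} to pass to triangulations with that vertex as an ear, and use the identification of that ear-subgraph with $\mathcal{MF}(\Sigma_{n-1})$ to obtain the recurrence $\mathrm{diam}(\mathcal{MF}(\Sigma_n))\leq4+\mathrm{diam}(\mathcal{MF}(\Sigma_{n-1}))$ for large $n$. The only cosmetic issue is that you reuse the symbol $c_\Sigma$ for the Euler-characteristic constant, whereas the paper reserves it for the limit in~(\ref{NOFG.sec.1.eq.1}).
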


Combining Proposition \ref{NOFG.sec.2.prop.1} with Theorem \ref{NOFG.sec.2.thm.3} shows that for every non-orientable filling surface $\Sigma$, there exists a constant $c_\Sigma$ satisfying (\ref{NOFG.sec.1.eq.1}). In turn, the first upper bound on $c_\Sigma$ stated by Theorem \ref{NOFG.sec.1.thm.1} follows from Theorem \ref{NOFG.sec.2.thm.3}. Let us now explain how the upper bound on $c_\Sigma$ provided by the same theorem in the special case when $\Sigma$ is one-holed can be recovered from the argument in~\cite{ParlierPournin2018a}. The demigenus of a non-orientable one-holed surface $\Sigma$ is the number of cross-caps that need to be inserted within a disk in order to build $\Sigma$. This number is also the least number of arcs that need to be removed from $\Sigma$ in order to transform it into a disk. The latter definition also makes sense in the orientable case and we will then call this number (which is in fact twice the surface's genus) the demigenus of an orientable one-holed surface. Note that a one-holed surface is non-orientable if and only if its demigenus is odd.

Consider a demigenus $g$ possibly non-orientable one-holed surface $\Sigma$. In order to upper bound the diameter of $\mathcal{MF}(\Sigma_n)$ we shall build an explicit path in $\mathcal{MF}(\Sigma_n)$ between any two triangulations $T^-$ and $T^+$ of $\Sigma_n$ using the same strategy as for Theorems 4 and 5 from \cite{ParlierPournin2018a}. First consider $T^-$. One can find a set of arcs $\gamma^-_1$ to $\gamma^-_g$ in $T^-$ such that the surface
$$
\Sigma_n\mathord{\setminus}\bigcup_{i=1}^g\gamma_i^-
$$
is a disk with $n+2g$ marked points on the boundary that we will denote by $\Delta^-_{n+2g}$. Each of the arcs $\gamma_i$ has two copies that serve as boundary arcs of $\Delta ^-_{n+2g}$. Two such consecutive arcs along the boundary of $\Delta^-_{n+2g}$ may be incident but if they are not, a sequence of boundary arcs of $\Sigma_n$ lies between them. Moreover, all the boundary arcs of $\Sigma_n$ belong to exactly one such sequence.

One can do the same using an appropriate set of arcs $\gamma^+_1$ to $\gamma^+_g$ in $T^+$ to build a disk $\Delta^+_{n+2g}$. By construction, there exists a sequence of $\lceil{n/(4g)}\rceil$ boundary arcs of $\Sigma_n$ that are consecutive along the boundaries of both $\Delta^-_{n+2g}$ and $\Delta^+_{n+2g}$. We denote by $v$ one of the extremities of the arcs in this sequence. Now observe that if the $n+2g-3$ interior arcs of $T^-$ other than $\gamma^-_1$ to $\gamma^-_n$ are not all incident to $v$, then one can always perform a flip in $T^-$ that introduces a new arc incident to $v$. This comes from the fact that these arcs are precisely the interior arcs in the triangulation of $\Delta^-_{n+2g}$ induced by $T^-$. Starting from $T^-$, one can therefore reach a triangulation of $\Sigma_n$ that contains $\gamma^-_1$ to $\gamma^-_g$ and whose other interior arcs are all incident to $v$ in at most $n+2g-3$ flips. Further flipping $\gamma^-_1$ to $\gamma^-_g$ in this triangulation introduces $g$ loops $\delta^-_1$ to $\delta^-_g$ twice incident to $v$ and we denote by $U^-$ the resulting triangulation. Let $U^+$ be the triangulation obtained using the similar sequence of flips starting from $T^+$. All the interior arcs of $U^+$ are incident to $v$ and among these arcs, we denote by $\delta^+_1$ to $\delta^+_g$ the loops that have been introduced when flipping $\gamma^+_1$ to $\gamma^+_g$.

The following is shown in \cite{ParlierPournin2018a} (see Theorems 4 and 5 therein) in the case when $g$ is even. However, the argument does not depend on the parity of $g$ and immediately carries over to the non-orientable case.

\begin{lem}\label{NOFG.sec.2.lem.1}
The distance of $U^-$ and $U^+$ in $\mathcal{MF}(\Sigma_n)$ is at most
$$
\biggl(2-\frac{1}{2g}\biggr)n+2g-1+\mathrm{diam}(\mathcal{MF}(\Sigma_1))
$$
when $g$ is at least $3$ and at most $7n/8+6$ when $g$ is equal to $2$.
\end{lem}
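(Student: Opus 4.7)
The plan is to adapt the proof of Theorems~4 and~5 in~\cite{ParlierPournin2018a}, which establish the same bounds when $g$ is even, and to verify that each step carries over when $g$ is odd. Since the construction used there is entirely combinatorial and topological, this reduces to checking that no step secretly relies on an orientation of $\Sigma$.

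The first step is to exploit the rigid structure of $U^-$ and $U^+$. Every interior arc of $U^-$ is incident to $v$, and $\delta^-_1, \dots, \delta^-_g$ are loops at $v$; the same holds for $U^+$. Cutting $\Sigma_n$ open along these $g$ loops turns it into a topological disk with $n + 2g$ marked boundary points (the $g$ loops becoming $g$ pairs of identified boundary arcs), on which $U^-$ induces a triangulation whose interior arcs all fan out from the lifted copies of $v$. The analogous cut for $U^+$ produces a second such disk. The problem of transforming $U^-$ into $U^+$ by flips is thus reduced to a polygon flip-distance problem, modulo the bookkeeping for the $g$ loops.

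The second step is to leverage the common sequence of $\lceil n/(4g) \rceil$ consecutive boundary arcs incident to $v$ that appear on the boundaries of both cut-disks. This sequence yields a long stretch along which the two induced triangulations can be made to coincide with no flips needed. The diameter of the flip-graph of an $N$-gon is at most $2N - O(1)$, and saving roughly $2 \lceil n/(4g) \rceil = n/(2g)$ flips on the common stretch yields the announced main term $(2 - 1/(2g))\,n$. The additive $2g - 1$ absorbs the flips of the $g$ loops themselves together with a few boundary corrections, while the $\mathrm{diam}(\mathcal{MF}(\Sigma_1))$ term handles an inductive reduction to the base case $n = 1$. For $g = 2$, an ad-hoc refinement of this polygon analysis identical to the one in~\cite{ParlierPournin2018a} yields the sharper $7n/8 + 6$ bound.

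The main obstacle, and the only place where the parity of $g$ could conceivably matter, is justifying that cutting $\Sigma_n$ along the $g$ loops $\delta^-_1, \dots, \delta^-_g$ does indeed produce a topological disk when $g$ is odd. This follows from the fact that the $\delta^-_i$ are obtained from the $\gamma^-_i$ by flips in a triangulation whose every interior arc is incident to $v$: the local exchange of diagonals of a quadrilateral does not depend on orientability, and a direct Euler characteristic count confirms that the resulting arc system still cuts $\Sigma_n$ into a disk with $n + 2g$ boundary points. Once this topological input is in place, the remainder of the argument is a polygon flip-distance estimate that is wholly insensitive to orientation, so the bounds from~\cite{ParlierPournin2018a} transfer word for word.
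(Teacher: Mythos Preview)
Your proposal is correct and matches the paper's approach exactly: the paper does not give an independent proof of this lemma but simply states that the argument of Theorems~4 and~5 in~\cite{ParlierPournin2018a} (proved there for even $g$) ``does not depend on the parity of $g$ and immediately carries over to the non-orientable case.'' Your write-up supplies a somewhat more detailed sketch of why orientability plays no role, but the strategy is identical.
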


Note that, when $g$ is equal to $2$, there is an homeomorphism in $\mathrm{Mod}(\Sigma_n)$ that sends $\gamma^-_1$ to $\gamma^+_1$ and $\gamma^-_2$ to $\gamma^+_2$. This explains why Lemma \ref{NOFG.sec.2.lem.1} provides a slightly better bound in that case. Since by construction
\begin{equation}\label{NOFG.sec.2.eq.1}
d(T^-,U^-)+d(T^+,U^+)\leq 2n+6g-6\mbox{,}
\end{equation}
one obtains the following as a consequence of Lemma \ref{NOFG.sec.2.lem.1}.
\begin{thm}\label{NOFG.sec.2.thm.3}
For any demigenus $g$ possibly non-orientable one-holed surface $\Sigma$, there exists a constant $K_g$ that depends on $g$ but not on $n$ such that
$$
\mathrm{diam}(\mathcal{MF}(\Sigma_n))\leq\biggl(4-\frac{1}{2g}\biggr)n+K_g\mbox{.}
$$
when $g$ is at least $3$ and
$$
\mathrm{diam}(\mathcal{MF}(\Sigma_n))\leq\frac{23}{8}n+K_2
$$
when $g$ is equal to $2$.
\end{thm}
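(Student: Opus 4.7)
The plan is to combine the bounds already assembled in the text with a triangle inequality in $\mathcal{MF}(\Sigma_n)$. Given arbitrary triangulations $T^-$ and $T^+$ of $\Sigma_n$, the construction described before Lemma \ref{NOFG.sec.2.lem.1} produces two auxiliary triangulations $U^-$ and $U^+$, chosen so that they share a common vertex $v$ in the privileged boundary on which all of their interior arcs are concentrated, and with matching systems of loops $\delta^\pm_1,\dots,\delta^\pm_g$. The triangle inequality
$$
d(T^-,T^+)\leq d(T^-,U^-)+d(U^-,U^+)+d(U^+,T^+)
$$
is the backbone of the argument.

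First I would invoke the inequality (\ref{NOFG.sec.2.eq.1}) established along the construction, namely
$$
d(T^-,U^-)+d(T^+,U^+)\leq 2n+6g-6\mbox{.}
$$
Then I would apply Lemma \ref{NOFG.sec.2.lem.1} to control the middle term. When $g\geq3$ this yields
$$
d(U^-,U^+)\leq\biggl(2-\frac{1}{2g}\biggr)n+2g-1+\mathrm{diam}(\mathcal{MF}(\Sigma_1))\mbox{,}
$$
and when $g=2$ it yields $d(U^-,U^+)\leq 7n/8+6$. Substituting into the triangle inequality gives
$$
d(T^-,T^+)\leq\biggl(4-\frac{1}{2g}\biggr)n+8g-7+\mathrm{diam}(\mathcal{MF}(\Sigma_1))
$$
for $g\geq3$, and $d(T^-,T^+)\leq 23n/8+12$ for $g=2$. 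The left-hand side depends on $T^\pm$ but the right-hand side does not, so taking the maximum over all pairs of triangulations upper bounds $\mathrm{diam}(\mathcal{MF}(\Sigma_n))$.

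Finally I would set $K_g=8g-7+\mathrm{diam}(\mathcal{MF}(\Sigma_1))$ when $g\geq3$ and $K_2=12$. Since $\Sigma_1$ is a fixed surface, $\mathrm{diam}(\mathcal{MF}(\Sigma_1))$ is a finite constant depending only on $g$ (through the topology of $\Sigma$), so both $K_g$ and $K_2$ are genuine constants independent of $n$, as required. There is essentially no obstacle here beyond bookkeeping: all nontrivial content has been extracted into (\ref{NOFG.sec.2.eq.1}) and Lemma \ref{NOFG.sec.2.lem.1}, so the present theorem reduces to a clean triangle-inequality assembly.
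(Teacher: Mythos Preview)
Your proof is correct and follows essentially the same approach as the paper: combining the triangle inequality with the bound (\ref{NOFG.sec.2.eq.1}) and Lemma~\ref{NOFG.sec.2.lem.1}, and extracting the same constants $K_g=8g-7+\mathrm{diam}(\mathcal{MF}(\Sigma_1))$ for $g\geq3$ and $K_2=12$.
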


By Lemma \ref{NOFG.sec.2.lem.1} and (\ref{NOFG.sec.2.eq.1}) we can take\footnote{In \cite{ParlierPournin2018a}, the $6g$ in the right-hand side of (\ref{NOFG.sec.2.eq.1}) is incorrectly replaced by $8g$ but this only affects the expression of $K_g$ and not the final result. Moreover, the value of $K_2$ is incorrectly set to $8$ instead of $12$ in \cite[Theorem 1]{ParlierPournin2018a} and \cite[Theorem 4]{ParlierPournin2018a} because the flips of arcs $\gamma^-_i$ and $\gamma^+_i$ have not been counted in the proof of the latter theorem.} $K_2$ equal to $12$ and
$$
K_g=8g-7+\mathrm{diam}(\mathcal{MF}(\Sigma_1))
$$
when $g$ is at least $3$ in the statement of Theorem \ref{NOFG.sec.2.thm.3}. The upper bounds on $c_\Sigma$ stated by Theorem \ref{NOFG.sec.1.thm.1} in the case when $\Sigma$ is a non-orientable one-holed surface is immediately obtained as a consequence of Theorem \ref{NOFG.sec.2.thm.3}.

\section{Far apart triangulations of non-orientable filling surfaces}\label{NOFG.sec.3}

It is shown in \cite{ParlierPournin2018a} that for any positive genus, oriented one-holed surface $\Sigma$,
\begin{equation}\label{NOFG.sec.3.eq.0}
\mathrm{diam}(\mathcal{MF}(\Sigma_n))\geq\biggl\lfloor\frac{5}{2}n\biggr\rfloor-2\mbox{.}
\end{equation}

The aim of this section is to extend this result to non-orientable filling surfaces and to strengthen it as follows in the case of one-holed surfaces.

\begin{thm}\label{NOFG.sec.3.thm.1}
If $\Sigma$ is a non-orientable filling surface or a positive genus orientable filling surface, then it satisfies (\ref{NOFG.sec.3.eq.0}) and if $\Sigma$ is a non-orientable or orientable one-holed surface other than a disk, then it satisfies
\begin{equation}\label{NOFG.sec.3.thm.1.eq.1}
\mathrm{diam}(\mathcal{MF}(\Sigma_n))\geq\biggl\lfloor\frac{5}{2}n\biggr\rfloor-2+\mathrm{diam}(\mathcal{MF}(\Sigma_1))\mbox{.}
\end{equation}
\end{thm}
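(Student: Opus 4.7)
The plan is to generalize the construction from \cite{ParlierPournin2018a} which established (\ref{NOFG.sec.3.eq.0}) when $\Sigma$ is a positive-genus orientable one-holed surface. That argument exhibits two triangulations $T^-$ and $T^+$ of $\Sigma_n$ whose combinatorial signatures along the privileged boundary are of opposite zigzag type (in the spirit of Pournin's associahedron constructions \cite{Pournin2014,Pournin2019}), and introduces a flip-invariant that changes by at most one per flip while differing by at least $\bigl\lfloor\tfrac{5}{2}n\bigr\rfloor-2$ between $T^-$ and $T^+$. I would first check that this construction and the associated invariant transfer to any filling surface satisfying the hypotheses of the theorem, by replacing the handle-based anchor of $T^-$ and $T^+$ with an appropriate configuration of arcs (exploiting a cross-cap when $\Sigma$ is non-orientable, a handle when $\Sigma$ is orientable of positive genus, or a combination thereof for filling surfaces with extra boundary components or punctures), and by redefining the flip-invariant using unsigned geometric intersection numbers with a reference arc rather than the signed pairings available only in the orientable setting.

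For the strengthening (\ref{NOFG.sec.3.thm.1.eq.1}) in the one-holed case, I would refine the construction so that $T^-$ and $T^+$ each contain, attached to the main picture through a single arc $\beta$ parallel to the privileged boundary, a copy of a triangulation of $\Sigma_1$, and so that the two copies form a pair $S^-,S^+$ realizing $\mathrm{diam}(\mathcal{MF}(\Sigma_1))$. By Theorem~\ref{NOFG.sec.2.cor.1}, the arc $\beta$ is preserved along every geodesic between $T^-$ and $T^+$ in $\mathcal{MF}(\Sigma_n)$. Consequently, every such geodesic decomposes into flips occurring inside the $\Sigma_1$-piece, contributing at least $\mathrm{diam}(\mathcal{MF}(\Sigma_1))$ steps, and flips occurring outside it, contributing at least $\bigl\lfloor\tfrac{5}{2}n\bigr\rfloor-2$ steps by the first step of the argument.

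The principal technical obstacle I anticipate is verifying, in the non-orientable setting, that a single flip changes the chosen invariant by at most one. In the orientable case one can use signed intersection pairings or homological invariants, but these are unavailable when $\Sigma$ is non-orientable or when the flipped arc is one-sided. I would therefore work with unsigned geometric intersection numbers, at the cost of a finer local case analysis covering all the non-orientable flip configurations (such as flips of loops enclosing a cross-cap or incident to two sides of the same triangle across a M\"obius band). A secondary point is to make sure the reference curves entering the definition of the invariant can be chosen so that their intersections with the zigzag triangulations on the disk side are controlled uniformly across the topological cases allowed in the statement.
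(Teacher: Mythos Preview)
Your proposal diverges from the paper's argument and the strengthening step contains a concrete gap.

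The paper does not use a flip-invariant. The method in \cite{ParlierPournin2018a}, generalized here, is based on boundary arc contractions (Lemma~\ref{NOFG.sec.3.lem.1}): if $f$ flips along a geodesic are incident to a boundary arc $\alpha$, then $d(T^-,T^+)\geq d(T^-\contract\alpha,T^+\contract\alpha)+f$. The paper constructs zigzag triangulations $A_n^\pm$ whose topology is confined behind a loop arc placed at \emph{opposite} ends of the zigzag, and shows through a case analysis (Proposition~\ref{NOFG.sec.3.prop.2}, Lemmas~\ref{NOFG.sec.3.lem.2}--\ref{NOFG.sec.3.lem.4}) that enough flips must touch the boundary arcs $\alpha,\beta,\gamma$ near one end to yield recursive inequalities such as $d(A_n^-,A_n^+)\geq d(A_{n-2}^-,A_{n-2}^+)+5$. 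The induction bottoms out at $d(A_1^-,A_1^+)=d(X_1^-,X_1^+)$, which is exactly where $\mathrm{diam}(\mathcal{MF}(\Sigma_1))$ enters. Your characterization of \cite{ParlierPournin2018a} as providing an invariant that changes by at most one per flip is not accurate, and I do not know of such an invariant yielding the $5n/2$ coefficient; that portion of your plan is speculative rather than a transfer of an existing argument.

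The gap in your strengthening is the following. If $\beta$ is an arc parallel to the privileged boundary shared by $T^-$ and $T^+$, then by definition one side of $\beta$ is an unpunctured disk and the other side carries all of the topology of $\Sigma$. Your $\Sigma_1$ piece must therefore lie on the non-disk side, so the ``main picture'' carrying the $n$ boundary points lies on the disk side. But the first step of your argument establishes (\ref{NOFG.sec.3.eq.0}) only for non-orientable or positive-genus orientable filling surfaces, not for disks, whose flip-graph has diameter about $2n$. Hence the flips outside the $\Sigma_1$ piece contribute only on the order of $2n$, and your decomposition yields at best roughly $2n+\mathrm{diam}(\mathcal{MF}(\Sigma_1))$, strictly weaker than (\ref{NOFG.sec.3.thm.1.eq.1}). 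The paper circumvents this precisely by \emph{not} sharing any such arc: in $A_n^-$ and $A_n^+$ the topology sits at different ends of the zigzag, and the $\mathrm{diam}(\mathcal{MF}(\Sigma_1))$ term is carried through the recursion via the base case together with a triangle-inequality argument tracking how the $\Sigma_1^\otimes$ triangulation may change along the way (Lemma~\ref{NOFG.sec.3.lem.4} and the end of the proof).
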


The proof of Theorem \ref{NOFG.sec.3.thm.1} will use a similar strategy as that of Theorem~7 from~\cite{ParlierPournin2018a} but in a more general setting. This level of generality requires considering more complicated families of triangulation pairs.

As in \cite{ParlierPournin2017,ParlierPournin2018a,ParlierPournin2018b,ParlierPournin2025,Pournin2014}, we will use \emph{boundary arc contractions}. Consider a filling surface $\Sigma$ and a triangulation $T$ of $\Sigma_n$. Provided that $n$ is at least $2$, the boundary arcs of $\Sigma_n$ contained (up to isotopy) in the privileged boundary have two distinct vertices. Consider such an arc $\alpha$. This arc naturally belongs to $T$ and there is a triangle $t$ of $T$ incident to it. Now remove the interiors of $\alpha$ and of $t$ from $\Sigma_n$ and glue the two remaining edges of $t$ to one another in such a way that the two extremities of $\alpha$ are identified as shown in Figure~\ref{NOFG.sec.3.fig.2.5}. This operation, which we refer to as the contraction of $\alpha$ in $T$ results in a triangulation of $\Sigma_{n-1}$ that we denote by $T\contract\alpha$. Now consider a triangulation $T'$ of $\Sigma_n$ that is obtained from $T$ by flipping some arc. We say that this flip is incident to $\alpha$ when it modifies the triangle incident to $\alpha$ within $T$. In other words, such a flip exchanges the diagonals of a quadrilateral incident to $\alpha$. The following is established in \cite{ParlierPournin2017} (see Theorem 2.4 therein) in the case when $\Sigma$ is orientable but its proof works as is in the non-orientable case.
\begin{figure}
\begin{centering}
\includegraphics[scale=1]{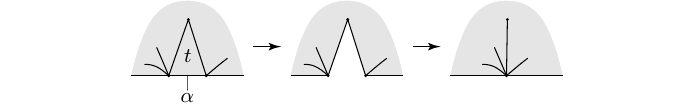}
\caption{The contraction of $\alpha$.}\label{NOFG.sec.3.fig.2.5}
\end{centering}
\end{figure}

\begin{lem}\label{NOFG.sec.3.lem.1}
Consider a possibly non-orientable filling surface $\Sigma$, a boundary arc $\alpha$ of $\Sigma_n$ contained up to isotopy in the privileged boundary of $\Sigma$, and two triangulations $T^-$ and $T^+$ of $\Sigma_n$. If $n$ is at least $2$ and $f$ flips are incident to $\alpha$ along some geodesic from $T^-$ to $T^+$ in $\mathcal{MF}(\Sigma_n)$, then 
$$
d(T^-,T^+)\geq{d(T^-\contract\alpha,T^+\contract\alpha)+f}\mbox{.}
$$
\end{lem}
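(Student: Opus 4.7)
The plan is to project a geodesic in $\mathcal{MF}(\Sigma_n)$ to a walk in $\mathcal{MF}(\Sigma_{n-1})$ via the contraction $\contract\alpha$. Fix a geodesic $(T_i)_{0\leq i\leq k}$ from $T^-=T_0$ to $T^+=T_k$ with $k=d(T^-,T^+)$ and consider the sequence $(T_i\contract\alpha)_{0\leq i\leq k}$. If I can show that this sequence is a walk from $T^-\contract\alpha$ to $T^+\contract\alpha$ in $\mathcal{MF}(\Sigma_{n-1})$ whose length is at most $k-f$, the desired inequality $d(T^-\contract\alpha,T^+\contract\alpha)\leq d(T^-,T^+)-f$ is immediate.

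Before projecting I would verify that $\contract\alpha$ descends to a well-defined map on the vertices of the modular flip-graphs. Since $\alpha$ lies in the privileged boundary of $\Sigma$, every element of $\mathrm{Mod}(\Sigma_n)$ fixes $\alpha$ pointwise, so after identifying the two endpoints of $\alpha$ each such homeomorphism descends to an element of $\mathrm{Mod}(\Sigma_{n-1})$ and contraction respects the quotient. With this in hand, the main step is a local analysis of how a single flip $T_{i-1}\to T_i$ interacts with contraction. Let $t$ be the unique triangle of $T_{i-1}$ containing $\alpha$, and let $\eta$ be the flipped arc. If $\eta$ is not an edge of $t$, i.e.\ the flip is not incident to $\alpha$, then the quadrilateral supporting the flip is disjoint from $t$; contraction and flipping take place in disjoint parts of the surface, so $T_{i-1}\contract\alpha$ and $T_i\contract\alpha$ are related by the flip of $\eta$. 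If instead $\eta$ is an edge of $t$, a short picture-chasing argument in the quadrilateral made of $t$ and the other triangle at $\eta$ shows that $T_{i-1}\contract\alpha$ and $T_i\contract\alpha$ coincide: both contractions collapse a triangle incident to $\alpha$ (the triangle $t$ on one side, the triangle obtained from $t$ by the flip on the other), identify the two non-$\alpha$ edges of that triangle, and leave the remainder of the surface unchanged. Thus each of the $f$ flips incident to $\alpha$ contributes zero edges to the projected walk, while each of the remaining $k-f$ flips contributes at most one, giving the required bound.

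The main obstacle is the second subcase of the local analysis, namely the verification that a flip incident to $\alpha$ truly produces the same triangulation after contraction. This amounts to a careful but routine comparison of the two resulting triangles at the contracted vertex $v=a=b$, using the third vertex $c$ of $t$ and the fourth vertex $d$ of the quadrilateral containing $\eta$. The argument is purely local and combinatorial and makes no use of orientability, which is precisely the reason the orientable proof carries over verbatim to the non-orientable setting.
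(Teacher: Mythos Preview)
Your argument is correct and is precisely the contraction argument the paper invokes: the paper does not prove this lemma itself but cites the orientable case (Theorem~2.4 in \cite{ParlierPournin2017}) and remarks that the same proof carries over verbatim to non-orientable $\Sigma$. Projecting the geodesic through $\contract\alpha$, so that flips incident to $\alpha$ collapse while the remaining ones survive as at most single flips, is exactly that proof.
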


Let us assume from now on that $\Sigma$ is either a non-orientable filling surface or a positive genus orientable filling surface. If $\Sigma$ is not one-holed, then we cut it along an arc whose extremities are different points in the privileged boundary into a one-holed filling surface $\Sigma^\otimes$ and a genus $0$ orientable filling surface $\Sigma^\odot$ (see Figure \ref{NOFG.sec.3.fig.1}). 
\begin{figure}[b]
\begin{centering}
\includegraphics[scale=1]{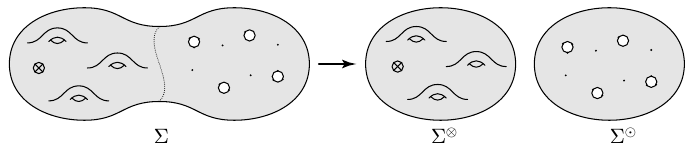}
\caption{If $\Sigma$ is not one holed we cut it into $\Sigma^\otimes$ and $\Sigma^\odot$.}\label{NOFG.sec.3.fig.1}
\end{centering}
\end{figure}
If $\Sigma$ is one-holed then we use $\Sigma^\otimes$ as an alternative notation for $\Sigma$. Since $\Sigma$ is either non-orientable or has positive genus, $\Sigma^\otimes$ cannot be a disk. We will build two triangulations families $A_n^-$ and $A_n^+$ of $\Sigma_n$ and show that their distance is at least the lower bound stated by Theorem~\ref{NOFG.sec.3.thm.1}.

Consider a triangulation $X_1^-$ of $\Sigma_1^\otimes$ and a triangulation $X_1^+$ of $\Sigma_1$. 
If $\Sigma$ is not one-holed, further consider a triangulation $O_1$ of $\Sigma_1^\odot$. A crucial point here is the existence of triangulations of $\Sigma_1^\otimes$ and, when $\Sigma$ is not one-holed, of $\Sigma_1^\odot$: such triangulations exist precisely because these filling surfaces are not disks. The triangulations  $A_n^-$ and $A_n^+$ are obtained from $X_1^-$, $X_1^+$, and $O_1$ as shown in Figure \ref{NOFG.sec.3.fig.2}. The triangulation $A_n ^+$ is shown in the third row in this figure depending on whether $n$ is even or odd. It is composed from a zigzag triangulation at one end of which there is a loop arc bounding $X_1^+$ and surrounded by two arcs with the same vertex pair. One of these arcs will be denoted by $\alpha'$ and the boundary arc at the opposite end of the zigzag is denoted by $\alpha$. When $\Sigma$ is one-holed, $A_n^-$ is represented in the top row of Figure \ref{NOFG.sec.3.fig.2}. The construction is similar except that the loop arc, which we denote by $\varepsilon$ is at the other end of the zigzag. When $\Sigma$ is not one-holed, $A_n^-$ is represented in the central row of Figure \ref{NOFG.sec.3.fig.2} and contains a loop arc at both ends of the zigzag. The figure really determines $A_n^-$ and $A_n^+$ when $n$ is at least $2$. We set $A_1^-$ and $A_1^+$ to be the triangulations obtained from $A_2^-$ and $A_2^+$ by contracting $\alpha$.

It will be important to keep in mind that $A_n^-$ and $A_n^+$ depend on $X_1^-$, $X_1^+$, and $O_1$. In this perspective, $A_n^-$ and $A_n^+$ can be thought of as two families of triangulations of $\Sigma_n$. In order not to overburden notations, we will not mark this dependence and will only mention it when needed.

\begin{figure}[b]
\begin{centering}
\includegraphics[scale=1]{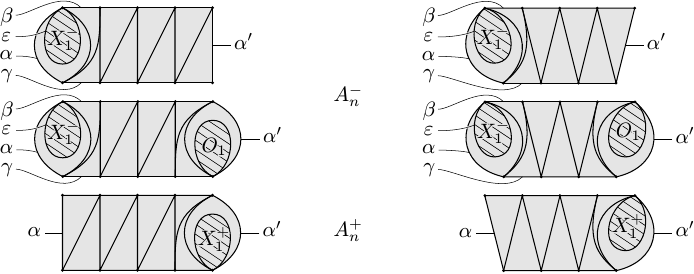}
\caption{The triangulations $A_n^-$ (first two rows) and $A_n^+$ (third row) when $n$ is even (left) and odd (right). The first row shows $A_n^-$ when $\Sigma$ is one-holed, and the second when it is not.}\label{NOFG.sec.3.fig.2}
\end{centering}
\end{figure}

These triangulations behave well with respect to certain boundary arc contractions. In particular, if $n$ is at least $2$, then
\begin{equation}\label{NOFG.sec.3.eq.0.5}
A_n^\pm\contract\alpha=A_{n-1}^\pm
\end{equation}
where the triangulations $A_n^\pm$ and $A_{n-1}^\pm$ that appear on both sides of this equality are built using the same initial triangulations $X_1^-$, $X_1^+$, and $O_1$. Moreover, if $n$ is at least $3$, then denote by $\beta$ and $\gamma$ the boundary arc of $\Sigma_n$ adjacent to $\alpha$ with the convention that $\beta$ is incident to $u$. Note that when $n$ is equal to $3$, $\gamma$ coincides with $\alpha'$. Contracting $\beta$ and $\gamma$ in $A_n^-$ or $A_n^+$ just removes two opposite boundary arcs from the zigzag. As a consequence,
\begin{equation}\label{NOFG.sec.3.eq.1}
A_n^\pm\contract\beta\contract\gamma=A_{n-2}^\pm\mbox{.}
\end{equation}
where again, the triangulations $A_n^\pm$ and $A_{n-2}^\pm$ on both sides of the equality are built from the same $X_1^-$, $X_1^+$, and $O_1$. The proof then consists in showing that when $n$ is at least $2$, there are sufficiently many flips along any geodesic path between $A_n^-$ and $A_n^+$ in $\mathcal{MF}(\Sigma_n)$ that are incident to $\alpha$, $\beta$, or $\gamma$. This will allow to obtain a recursive lower bound on the distance of $A_n^-$ and $A_n^+$ in $\mathcal{MF}(\Sigma_n)$ via Lemma \ref{NOFG.sec.3.lem.1}. 
By that lemma, the following is immediate.

\begin{lem}\label{NOFG.sec.3.lem.0}
If $n$ is at least $2$ and at least three flips are incident to $\alpha^-$ along some geodesic path between $A_n^-$ and $A_n^+$ in $\mathcal{MF}(\Sigma_n)$, then
$$
d(A_n^-,A_n^+)\geq{d(A_{n-1}^-,A_{n-1}^+)+3}
$$
where $A_n^\pm$ and $A_{n-1}^\pm$ are built from the same triangulations $X_1^\pm$ and $O_1$.
\end{lem}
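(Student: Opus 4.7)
The plan is to observe that this lemma is a direct substitution, combining Lemma~\ref{NOFG.sec.3.lem.1} with the identity (\ref{NOFG.sec.3.eq.0.5}). I would not need any new geometric or combinatorial input, since all the work has already been done when setting up the boundary-arc contraction framework and the explicit families $A_n^\pm$.

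Concretely, first I would fix a geodesic between $A_n^-$ and $A_n^+$ in $\mathcal{MF}(\Sigma_n)$ along which at least three flips are incident to $\alpha$, which is possible by hypothesis. Then I would apply Lemma~\ref{NOFG.sec.3.lem.1} to the triangulations $T^-=A_n^-$ and $T^+=A_n^+$ with $f\geq 3$, yielding
$$
d(A_n^-,A_n^+)\geq d(A_n^-\contract\alpha,\,A_n^+\contract\alpha)+3\mbox{.}
$$
Finally, I would invoke equation (\ref{NOFG.sec.3.eq.0.5}), which gives $A_n^\pm\contract\alpha=A_{n-1}^\pm$ (with the same initial data $X_1^-$, $X_1^+$, and $O_1$ used for $A_n^\pm$), and substitute into the right-hand side to conclude
$$
d(A_n^-,A_n^+)\geq d(A_{n-1}^-,A_{n-1}^+)+3\mbox{.}
$$

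There is no real obstacle here: the only thing to check is that the hypothesis $n\geq 2$ in the statement matches the hypothesis required for Lemma~\ref{NOFG.sec.3.lem.1} and for equation (\ref{NOFG.sec.3.eq.0.5}), both of which are stated under exactly this assumption, and that $\alpha$ is indeed a boundary arc of $\Sigma_n$ contained up to isotopy in the privileged boundary, which is guaranteed by the construction of $A_n^\pm$ illustrated in Figure~\ref{NOFG.sec.3.fig.2}. This is consistent with the author's remark that the statement is immediate from Lemma~\ref{NOFG.sec.3.lem.1}; the substantive content of Section~\ref{NOFG.sec.3} lies elsewhere, namely in establishing that for any geodesic between $A_n^-$ and $A_n^+$ one can always find such a vertex-end (at $\alpha$, $\beta$, or $\gamma$) where enough incident flips occur to make a recursion of this shape go through.
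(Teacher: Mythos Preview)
Your proposal is correct and matches the paper's approach exactly: the paper states that the lemma is immediate from Lemma~\ref{NOFG.sec.3.lem.1}, and your argument spells this out by applying that lemma with $f\geq 3$ and then invoking (\ref{NOFG.sec.3.eq.0.5}) to identify $A_n^\pm\contract\alpha$ with $A_{n-1}^\pm$.
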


Let us now study the number of flips incident to $\alpha$ in the paths between $A_n^-$ and $A_n^+$ in $\mathcal{MF}(\Sigma_n)$. We have the following properties.
\begin{prop}\label{NOFG.sec.3.prop.2}
If $n$ is at least $2$, then at least two flips are incident to $\alpha$ along a path from $A_n^-$ to $A_n^+$ in $\mathcal{MF}(\Sigma_n)$ and if the first such flip removes $\varepsilon$, then at least three flips are incident to $\alpha$ along this path.
\end{prop}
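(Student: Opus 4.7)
My plan is to track the unique triangle $t_\alpha(T)$ incident to the boundary arc $\alpha$ along any path $T_0 = A_n^-, T_1, \ldots, T_k = A_n^+$ in $\mathcal{MF}(\Sigma_n)$. Since $\alpha$ lies in the privileged boundary, it belongs to every triangulation and is incident to exactly one triangle. Moreover, $t_\alpha$ is modified by $T_{i-1} \mapsto T_i$ if and only if this flip is incident to $\alpha$, in which case exactly one of the two non-$\alpha$ edges of $t_\alpha$ is replaced. Counting $\alpha$-incident flips therefore reduces to counting how many non-$\alpha$ edges of $t_\alpha$ must be changed to go from $t_\alpha(A_n^-)$ to $t_\alpha(A_n^+)$.

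The first step is to read off Figure~\ref{NOFG.sec.3.fig.2} the two non-$\alpha$ edges of $t_\alpha(A_n^+)$ (which lie in the zigzag, far from the loop bounding $X_1^+$) and the two non-$\alpha$ edges of $t_\alpha(A_n^-)$ (which, because the loop is attached at the $\alpha$-end of the zigzag, involve the $\varepsilon$-configuration), and verify that the two pairs are disjoint. Since a single $\alpha$-incident flip modifies only one non-$\alpha$ edge of $t_\alpha$, at least two such flips are then required, proving the first part of the proposition.

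For the second part, suppose that the first $\alpha$-incident flip removes $\varepsilon$. Since flips not incident to $\alpha$ leave $t_\alpha$ unchanged, the triangle $t_\alpha$ is still $t_\alpha(A_n^-)$ just before this flip, so $\varepsilon$ must be one of its two non-$\alpha$ edges; the flip replaces $\varepsilon$ by the other diagonal $\varepsilon'$ of the quadrilateral formed by $t_\alpha(A_n^-)$ and its adjacent triangle. I would then use the figure to identify this quadrilateral and check that $\varepsilon'$ is distinct from both non-$\alpha$ edges of $t_\alpha(A_n^+)$. Combined with the edge-disjointness from the first step, the new triangle still shares no non-$\alpha$ edge with $t_\alpha(A_n^+)$, so at least two additional $\alpha$-incident flips are needed, giving three in total.

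The main obstacle is extracting enough structural information from Figure~\ref{NOFG.sec.3.fig.2} to confirm these edge-disjointness claims in every regime: $\Sigma$ one-holed versus not (so $A_n^-$ carries one or two loops), and $n$ even versus odd (which affects the zigzag), together with a routine verification of the small cases $n = 2$ and $n = 3$, where the zigzag degenerates and $\gamma$ coincides with $\alpha'$. A secondary but minor subtlety is that the argument is formally carried out in $\mathcal{MF}(\Sigma_n)$; since $\alpha$ is fixed pointwise by every element of $\mathrm{Mod}(\Sigma_n)$ and flips lift in the standard way, one can track $t_\alpha$ along a lift of the path to $\mathcal{F}(\Sigma_n)$ without affecting the count of $\alpha$-incident flips.
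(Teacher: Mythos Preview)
Your core counting principle is false: an $\alpha$-incident flip does \emph{not} modify only one non-$\alpha$ edge of $t_\alpha$, it modifies both. When you flip an edge $e$ of $t_\alpha$, the quadrilateral has $\alpha$ as a side and $e$ as its diagonal; the new $t_\alpha$ is bounded by $\alpha$, the new diagonal, and the side of the quadrilateral adjacent to $\alpha$ coming from the \emph{other} old triangle, not the remaining edge of the old $t_\alpha$. (Concretely, in a quadrilateral with corners $A,B,C,D$ and $\alpha=AB$, flipping the diagonal $AC$ sends $t_\alpha$ from $ABC$ with non-$\alpha$ edges $\{BC,CA\}$ to $ABD$ with non-$\alpha$ edges $\{BD,DA\}$.) So edge-disjointness of the non-$\alpha$ sides of $t^-$ and $t^+$ by itself does not rule out a single flip, and your argument for both assertions collapses.

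What the paper actually uses is a crossing obstruction. A single $\alpha$-incident flip carrying $t^-$ to $t^+$ would force $t^-\cup t^+$ to be the flipped quadrilateral, hence the non-$\alpha$ edges of $t^+$ would have to be disjoint from those of $t^-$; but one interior edge of $t^+$ crosses $\varepsilon$ twice, so no such quadrilateral exists. For the second assertion there is a further point your plan misses: the triangle on the other side of $\varepsilon$ lies inside $\Sigma^\otimes_1$ and may already have been altered by earlier (non-$\alpha$-incident) flips, so you cannot read the quadrilateral off Figure~\ref{NOFG.sec.3.fig.2}. The paper circumvents this by noting that, since $\Sigma^\otimes$ is one-holed with a single marked point $u$, \emph{every} arc inside $\Sigma^\otimes_1$ is a loop at $u$; hence, whatever has happened there, after flipping $\varepsilon$ the new $t_\alpha$ still has a loop $\lambda$ at $u$ among its edges, and the same crossing argument (an edge of $t^+$ crosses $\lambda$ twice) forces at least two further $\alpha$-incident flips.
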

\begin{proof}
Assume that $n$ is at least $2$ and consider a path from $A_n^-$ to $A_n^+$ in $\mathcal{MF}(\Sigma_n)$. First observe that if there is a unique flip incident to $\alpha$ along that path, then it must transform the triangle $t^-$ of $A_n^-$ incident to $\alpha$ into the triangle $t^+$ of $A_n^+$ incident to $\alpha$. These two triangles are shown on the left of Figure~\ref{NOFG.sec.3.fig.4} where the interior arc of $A_n^+$ that bounds $t^+$ is drawn as a dotted line. One can see in particular that this dotted arc crosses $\varepsilon$ twice. As $\varepsilon$ bounds $t^-$, it is not possible to transform $t^-$ into $t^+$ by a single flip. As a consequence, at least two flips are incident to $\alpha$ along the considered path.
\begin{figure}
\begin{centering}
\includegraphics[scale=1]{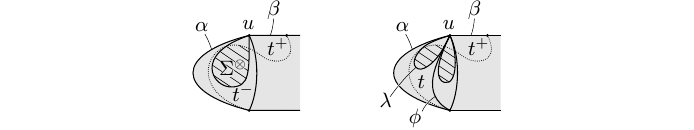}
\caption{The triangles $t^-$, $t^+$, and $t$.}\label{NOFG.sec.3.fig.4}
\end{centering}
\end{figure}

Now assume that the first flip incident to $\alpha$ along that path removes $\varepsilon$. Since $\Sigma^\otimes$ is one-holed, this flip necessarily introduces an arc $\phi$ incident to $u$ as shown on the right of Figure \ref{NOFG.sec.3.fig.4}. The triangle $t$ incident to $\alpha$ after that flip still admits a loop $\lambda$ twice incident to $u$ as one of its edges. One of the interior arcs bounding $t^+$ shown as a dotted line crosses $\lambda$ twice and therefore, at least two more flips must be incident to $\alpha$ along the considered path. As a consequence, that path cannot have just two flips incident to the arc $\alpha$.
\end{proof}

Observe that the triangle incident to $\alpha$ in $A_2^-$ is bounded by $\varepsilon$ and two boundary arcs. As a consequence, the first flip incident to $\alpha$ along a geodesic path from $A_2^-$ to $A_2^+$ in $\mathcal{MF}(\Sigma_2)$ necessarily removes $\varepsilon$. Moreover, when $\Sigma$ is one-holed, $A_1^-$ and $A_1^+$ coincide with $X_1^-$ and $X_1^+$. The following statement is therefore a consequence of Proposition \ref{NOFG.sec.3.prop.2}, Lemma \ref{NOFG.sec.3.lem.0}, and Equation (\ref{NOFG.sec.3.eq.0.5}).
\begin{prop}\label{NOFG.sec.3.prop.2.5}
If $\Sigma$ is not a one-holed surface, then
$$
d(A_2^-,A_2^+)\geq3
$$
and if $\Sigma$ is a one-holed surface, then
$$
d(A_2^-,A_2^+)\geq{d(X_1^-,X_1^+)+3}\mbox{.}
$$
where the triangulations $A_2^\pm$ are built from the triangulations $X_1^\pm$.
\end{prop}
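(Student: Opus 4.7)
The plan is to chain together Proposition~\ref{NOFG.sec.3.prop.2}, Lemma~\ref{NOFG.sec.3.lem.0}, and Equation~(\ref{NOFG.sec.3.eq.0.5}) in a straightforward way, with the only substantive observation being a local inspection of the triangle of $A_2^-$ incident to $\alpha$.

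First, I would fix a geodesic path between $A_2^-$ and $A_2^+$ in $\mathcal{MF}(\Sigma_2)$ and focus on its flips incident to $\alpha$. Since $\alpha$ is a boundary arc, any flip incident to $\alpha$ must flip the unique interior edge of the triangle of the current triangulation that contains $\alpha$. In $A_2^-$, that triangle is bounded by $\alpha$, another boundary arc, and $\varepsilon$, so the only candidate interior edge is $\varepsilon$ itself. Consequently, the very first flip incident to $\alpha$ along the chosen geodesic must remove $\varepsilon$.

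Applying Proposition~\ref{NOFG.sec.3.prop.2} with $n=2$ then forces at least three flips of the geodesic to be incident to $\alpha$. Lemma~\ref{NOFG.sec.3.lem.0} with $n=2$ therefore yields
$$
d(A_2^-,A_2^+)\geq d(A_1^-\contract\alpha\text{-free},A_1^+)+3\mbox{,}
$$
and by Equation~(\ref{NOFG.sec.3.eq.0.5}), $A_2^\pm\contract\alpha=A_1^\pm$, so
$$
d(A_2^-,A_2^+)\geq d(A_1^-,A_1^+)+3\mbox{.}
$$

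It then remains to distinguish the two cases. If $\Sigma$ is not one-holed, the trivial bound $d(A_1^-,A_1^+)\geq 0$ gives the first inequality $d(A_2^-,A_2^+)\geq 3$. If $\Sigma$ is one-holed, the triangulations $A_1^-$ and $A_1^+$ coincide with $X_1^-$ and $X_1^+$ (as noted just before the statement), so $d(A_1^-,A_1^+)=d(X_1^-,X_1^+)$, which delivers the second inequality. No step here looks to be a genuine obstacle, as all the difficulty has already been packed into Proposition~\ref{NOFG.sec.3.prop.2} and Lemma~\ref{NOFG.sec.3.lem.0}; the main thing to be careful about is the initial local observation that the first flip incident to $\alpha$ in $A_2^-$ has no option but to remove $\varepsilon$, which is what lets us upgrade from two to three flips incident to $\alpha$.
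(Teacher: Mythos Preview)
Your argument is correct and follows exactly the same route as the paper: the triangle of $A_2^-$ incident to $\alpha$ is bounded by $\varepsilon$ and two boundary arcs, so the first flip incident to $\alpha$ must remove $\varepsilon$, whence Proposition~\ref{NOFG.sec.3.prop.2}, Lemma~\ref{NOFG.sec.3.lem.0}, and Equation~(\ref{NOFG.sec.3.eq.0.5}) give $d(A_2^-,A_2^+)\geq d(A_1^-,A_1^+)+3$, and the two cases follow. The only blemish is the stray expression ``$A_1^-\contract\alpha\text{-free}$'' in your first displayed inequality, which is meaningless as written; Lemma~\ref{NOFG.sec.3.lem.0} already gives $d(A_2^-,A_2^+)\geq d(A_1^-,A_1^+)+3$ directly, so just drop that intermediate line.
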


We now focus on the geodesics from $A_n^-$ to $A_n^+$ in $\mathcal{MF}(\Sigma_n)$ along which exactly two flips are incident to $\alpha$. By Proposition \ref{NOFG.sec.3.prop.2}, the first such flip removes the the interior arc $\delta$ in $A_n^-$ whose pair of vertices is the same than $\alpha$. Denote by $\phi$ the arc introduced by this flip. We will examine three cases depending on which arcs of $A_n ^-$ are crossed by $\phi$. The first case we consider is when $\phi$ does not cross any other arc of $A_n^-$ than $\delta$. In that case, the flip that replaces $\delta$ by $\phi$ is necessarily the one shown on the left of Figure \ref{NOFG.sec.3.fig.5}.

\begin{lem}\label{NOFG.sec.3.lem.2}
Assume that $n$ is at least $2$ and consider a geodesic path from $A_n^-$ to $A_n^+$ in $\mathcal{MF}(\Sigma_n)$ along which exactly two flips are incident to $\alpha$. Further consider the arc $\phi$ introduced by the first of these two flips. If $\delta$ is the only interior arc of $A_n^-$ crossed by $\phi$ then
$$
d(A_n^-,A_n^+)\geq{d(A_n^-\contract\beta,A_n^+\contract\beta)+4}\mbox{.}
$$
\end{lem}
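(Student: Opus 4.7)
The plan is to apply Lemma \ref{NOFG.sec.3.lem.1} to the boundary arc $\beta$: the desired bound will follow once one proves that at least four flips along the considered geodesic are incident to $\beta$. From the hypothesis together with Proposition \ref{NOFG.sec.3.prop.2}, one already knows that exactly two flips along this geodesic are incident to $\alpha$, the first of which removes $\delta$ and introduces the arc $\phi$.

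The first step is to observe that, since $\delta$ shares both its endpoints with the boundary arc $\alpha$ and $\beta$ meets $\alpha$ at the vertex $u$, the triangle of $A_n^-$ lying on the opposite side of $\delta$ from $t^-$ must admit $\beta$ as one of its edges. As a consequence, the first of the two flips incident to $\alpha$ is simultaneously incident to $\beta$. Using the explicit description on the left of Figure \ref{NOFG.sec.3.fig.5}, which is forced by the hypothesis that $\phi$ does not cross any interior arc of $A_n^-$ other than $\delta$, one then checks that the triangle incident to $\alpha$ immediately after this flip still admits $\beta$ as an edge; hence the second flip incident to $\alpha$, which produces $t^+$, is also incident to $\beta$. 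This already accounts for two of the four required $\beta$-incident flips.

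For the remaining two $\beta$-incident flips, I would compare the triangle adjacent to $\beta$ just after the two $\alpha$-incident flips have been performed with the triangle adjacent to $\beta$ in $A_n^+$. A local inspection of the zigzag structure near $u$ should show that these two triangles are distinct and that the third edges of these triangles (that is, the interior arcs incident to $\beta$ but not to $\alpha$) are separated by at least two further flips incident to $\beta$. Because exactly two flips along the geodesic are incident to $\alpha$, none of these two additional flips is incident to $\alpha$, so they contribute genuinely to the $\beta$-count, yielding the required four flips in total.

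The main obstacle is the last step: rigorously verifying that two additional $\beta$-incident flips are indeed necessary in order to transform the triangle adjacent to $\beta$ after the two $\alpha$-incident flips into the triangle adjacent to $\beta$ in $A_n^+$. This should reduce to a finite local combinatorial analysis at the vertex $u$, in the spirit of the crossing argument used in the proof of Proposition \ref{NOFG.sec.3.prop.2} but carried out at $\beta$ instead of $\alpha$, with the help of the specific position of $\phi$ relative to the zigzag provided by the hypothesis.
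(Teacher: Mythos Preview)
Your overall plan coincides with the paper's: show that at least four flips along the geodesic are incident to $\beta$ and invoke Lemma~\ref{NOFG.sec.3.lem.1}. You are also right that both $\alpha$-incident flips are $\beta$-incident. But two problems arise in the remainder of your argument.

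First, a minor slip in your justification for the second $\alpha$-flip. After the first $\alpha$-flip the triangle incident to $\alpha$ is $\{\alpha,\phi,\zeta\}$, not a triangle containing $\beta$; the boundary arc $\beta$ bounds the \emph{other} new triangle $\{\beta,\phi,\varepsilon\}$. The second $\alpha$-flip is still $\beta$-incident, but for a different reason: it removes $\phi$, and the two triangles adjacent to $\phi$ are precisely the triangle at $\alpha$ and the triangle at $\beta$.

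Second, and this is the real gap, the two additional $\beta$-incident flips cannot be located \emph{after} the second $\alpha$-flip as you propose. That second flip must create $t^+$, and working out the quadrilateral shows that $t^+=\{\alpha,\beta,\eta\}$ has $\beta$ as an edge. Hence the triangle incident to $\beta$ immediately after the second $\alpha$-flip is already equal to the triangle incident to $\beta$ in $A_n^+$, and nothing forces any further $\beta$-incident flip beyond that point. Your ``finite local combinatorial analysis'' would therefore yield zero, not two, additional flips. The paper instead finds the two extra $\beta$-flips \emph{between} the two $\alpha$-flips, via a mechanism you do not mention: just after the first $\alpha$-flip the triangulation contains the loop $\varepsilon$ around $\Sigma^\otimes$ based at $u$ (it is an edge of the triangle at $\beta$), whereas just before the second $\alpha$-flip it must contain a loop around $\Sigma^\otimes$ based at the other vertex $u'$ of $\beta$ (again an edge of the triangle at $\beta$). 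A single flip cannot turn one of these loops into the other, so the flip removing the first and the flip introducing the second are two distinct $\beta$-incident flips, giving four in total.
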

\begin{proof}
According to Proposition \ref{NOFG.sec.3.prop.2}, the flip that introduces $\phi$ must remove the arc $\delta$. Assume that $\phi$ does not cross any arc of $A_n^-$ other than $\delta$. In that case, the first and the second flips incident to $\alpha$ along the considered path must be as shown on the left and on the right of Figure \ref{NOFG.sec.3.fig.5}. In particular, $\phi$ (shown as a dotted line on the left of the figure) has the same pair of vertices as $\beta$. The second flip replaces $\phi$ with the arc $\eta$ shown as a dotted line on the right of the figure. Observe that both of these flips are incident to $\beta$.
\begin{figure}
\begin{centering}
\includegraphics[scale=1]{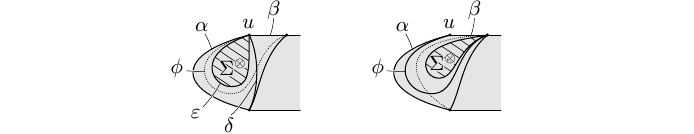}
\caption{Two flips incident to $\alpha$.}\label{NOFG.sec.3.fig.5}
\end{centering}
\end{figure}

One can see in Figure \ref{NOFG.sec.3.fig.5} that $\phi$ and $\eta$ are introduced in triangulations each containing a loop arc around $\Sigma^\otimes$. These loop arcs are twice incident to a different vertex of $\beta$. Hence, at least two more flips incident to $\beta$ must take place along the considered path between the two flips incident to $\alpha$: one that removes the loop arc twice incident to $u$ and one that introduces the loop arc twice incident to the other vertex of $\beta$. Indeed, a single flip canot exchange these two loop arcs. Hence at least four flips are incident to $\beta$ along the considered path and the desired inequality follows from Lemma \ref{NOFG.sec.3.lem.1}. 
\end{proof}

Denote by $\zeta$ the arc shared by the triangles of $A_n^-$ incident to $\beta$ and $\gamma$. We next review the case when $\phi$ crosses at least one interior arc of $A_n^-$ other than $\delta$ and $\zeta$. Since $A_n^-$ is built from a zigzag triangulation this may only happen when the triangle of $A_n^-$ that is incident to $\gamma$ is bounded by two interior arcs of $A_n^-$ which are then both crossed by $\phi$. Observe that when $\Sigma$ is one-holed this implies that $n$ is at least $4$ but when $\Sigma$ is not one-holed, this can happen already when $n$ is equal to $3$ (in which case $\gamma$ coincides with $\alpha'$).
\begin{figure}[b]
\begin{centering}
\includegraphics[scale=1]{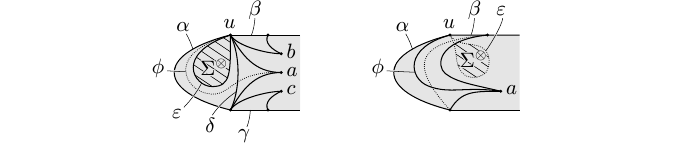}
\caption{The two flips incident to arc $\alpha$ along the path considered in the proof of Proposition \ref{NOFG.sec.3.lem.3}.}\label{NOFG.sec.3.fig.6}
\end{centering}
\end{figure}

\begin{lem}\label{NOFG.sec.3.lem.3}
Assume that $n$ is at least $3$ and consider a geodesic path from $A_n^-$ to $A_n^+$ in $\mathcal{MF}(\Sigma_n)$ along which exactly two flips are incident to $\alpha$. Further consider the arc $\phi$ introduced by the first of these two flips. If $\phi$ crosses $\delta$, $\zeta$, and at least one other interior arc of $A_n^-$, then
$$
d(A_n^-,A_n^+)\geq{d(A_{n-2}^-,A_{n-2}^+)+5}\mbox{.}
$$
where $A_n^\pm$ and $A_{n-2}^\pm$ are built from the same triangulations $X_1^\pm$ and $O_1$.
\end{lem}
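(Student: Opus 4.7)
The plan is based on a projection argument: contracting $\beta$ and then $\gamma$ along the considered geodesic $G=(T_j)_{0\leq j\leq k}$ yields a path in $\mathcal{MF}(\Sigma_{n-2})$ between $A_{n-2}^-$ and $A_{n-2}^+$ (by Equation~(\ref{NOFG.sec.3.eq.1}) and the fact that contractions of the vertex-disjoint boundary arcs $\beta$ and $\gamma$ commute), whose length is $|G|-f_{\beta\cup\gamma}(G)$, where $f_{\beta\cup\gamma}(G)$ denotes the number of flips of $G$ incident to $\beta$ or to $\gamma$. Since $G$ is a geodesic, $|G|=d(A_n^-,A_n^+)$; hence $d(A_n^-,A_n^+)\geq d(A_{n-2}^-,A_{n-2}^+)+f_{\beta\cup\gamma}(G)$, and it suffices to prove that $f_{\beta\cup\gamma}(G)\geq 5$.

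Let $i_1<i_2$ be the two indices of flips incident to $\alpha$ along $G$; by Proposition~\ref{NOFG.sec.3.prop.2} the flip at step $i_1$ removes $\delta$ and introduces $\phi$. Let $\eta$ denote the interior arc of $A_n^-$ other than $\zeta$ bounding the triangle of $A_n^-$ incident to $\gamma$, so that by hypothesis $\phi$ crosses both $\zeta$ and $\eta$. I would first observe that both $\zeta$ and $\eta$ are absent from $T_{i_1-1}$, since otherwise one of them would remain in $T_{i_1}$ and cross $\phi$. Removing them from the triangulation requires flipping each at some point in the first half of $G$; a case analysis on the order in which they are flipped shows that each of these two flips is incident to $\beta$ or to $\gamma$. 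Indeed, whichever is flipped first is directly adjacent to one of the $A_n^-$ triangles incident to $\beta$ or $\gamma$ (where the arc is then an edge), and the second flip inherits a $\beta$- or $\gamma$-adjacency via the arc introduced in place of the first. This yields at least two flips incident to $\beta\cup\gamma$ in $T_0,\ldots,T_{i_1-1}$.

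A symmetric analysis of the second half of the geodesic (based on the fact that the arc introduced at step $i_2$ must cross the two interior arcs bounding the triangle of $A_n^+$ incident to $\gamma$) yields at least two further flips incident to $\beta\cup\gamma$ in $T_{i_2+1},\ldots,T_k$. The fifth flip comes from step $i_1$ itself: the quadrilateral flipped at $i_1$ consists of $t^-=(\alpha,\delta,\varepsilon)$ together with the triangle of $T_{i_1-1}$ opposite to $t^-$ across $\delta$, and tracking the evolution of this latter triangle through the admissible first-half flip sequences reveals that it has $\gamma$ (or, in some configurations, $\beta$) as one of its three edges, so that the flip at $i_1$ is itself incident to $\beta\cup\gamma$. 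Combining these five distinct flips with the projection principle gives the claim. The main obstacle lies in the last step: verifying that the triangle opposite to $t^-$ across $\delta$ in $T_{i_1-1}$ has $\beta$ or $\gamma$ as an edge in every configuration requires a case analysis of the possible flip sequences in the first half, which is delicate because of the variety of intermediate arcs that can arise when $\zeta$ and $\eta$ are removed.
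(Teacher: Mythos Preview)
Your projection framework is sound and is exactly what the paper uses: contracting $\beta$ and then $\gamma$ along the geodesic drops precisely the flips incident to $\beta\cup\gamma$, so it suffices to find five of them. Your first-half count also works: since the $\beta$-triangle $(\beta,\delta,\zeta)$ of $A_n^-$ cannot change before $\zeta$ is flipped (as $\delta$ is frozen until step $i_1$), the first flip of $\zeta$ is incident to $\beta$; and while both $\delta$ and $\eta$ are present, the quadrilateral they bound together with $\beta$ and $\gamma$ has a single diagonal, so the first flip of $\eta$ is incident to whichever of $\beta,\gamma$ currently borders $\eta$.

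The gap is in the remaining three flips. Your claim that the flip at $i_1$ is incident to $\beta\cup\gamma$ is not true in general. Starting from $A_n^-$, perform the flips $\zeta\mapsto\zeta'$, then $\eta\mapsto(u,p')$, then $\zeta'\mapsto(v,p')$ (with $p'$ the boundary vertex adjacent to $p$ away from $u$). After these three flips the triangle opposite $t^-$ across $\delta$ is bounded by $\delta$, $(u,p')$, and $(v,p')$, and contains neither $\beta$ nor $\gamma$; flipping $\delta$ now is incident to neither. There is no combinatorial obstruction to this being the prefix of a geodesic, so your case analysis cannot conclude what you want. Your ``symmetric'' argument for the portion after $i_2$ is also problematic: the arc introduced at step $i_2$ is an arc of $A_n^+$ and hence crosses nothing in $A_n^+$; and the surfaces $A_n^-$ and $A_n^+$ carry the loop at opposite ends of the zigzag, so there is no symmetry to invoke.

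The paper sidesteps the flip-tracking entirely. It splits the geodesic at $T_j$ (just after the first $\alpha$-flip) and argues only from the \emph{endpoints}: the $\beta$-triangles of $A_n^-$ and $T_{j-1}$ differ, the $\beta$-triangles of $T_j$ and $T_{j'-1}$ differ, and the flip at $j'$ is itself incident to $\beta$, giving three $\beta$-contractions; then, in the $\beta$-contracted picture, the $\gamma$-triangles of $A_n^-\contract\beta$ and $T_j\contract\beta$ differ (because $\phi$ still crosses $\eta$), and those of $T_j\contract\beta$ and $A_n^+\contract\beta$ differ (because $\varepsilon\in T_j$ crosses the relevant edge of $A_n^+$), giving two more. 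Applying Lemma~\ref{NOFG.sec.3.lem.1} to each half separately and recombining via the triangle inequality yields the $+5$ without ever needing to identify which individual flips are responsible.
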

\begin{proof}
Denote by $(T_i)_{0\leq{i}\leq{k}}$ the considered path and assume that the first flip incident to $\alpha$ along that path is the one that transforms the triangulation $T_{j-1}$ into the triangulation $T_j$. According to Proposition \ref{NOFG.sec.3.prop.2} the flip that transforms $T_{j-1}$ into $T_j$ must remove $\delta$ and must be as shown on the left of Figure~\ref{NOFG.sec.3.fig.6} where $\phi$ is drawn as a dotted line. In the figure, $a$, $b$, and $c$ denote the vertices of the triangles of $T_j$ incident to $\alpha$, $\beta$, and $\gamma$ that are opposite to these arcs. Note that these three points may not be pairwise distinct.

Now assume that the second flip incident to $\alpha$ along the considered path transforms the triangulation $T_{j'-1}$ into triangulation $T_{j'}$. Since there is no other flip incident to $\alpha$ along $(T_i)_{j\leq{i}<j'}$, this flip must be as shown on the right of Figure~\ref{NOFG.sec.3.fig.6} where both the arc introduced by the flip and the arc $\varepsilon$ are drawn as a dotted lines. One can see that the triangles of $T_j$ and $T_{j'-1}$ incident to $\beta$ cannot be the same because one of the arcs bounding the latter triangle crosses $\varepsilon$ while $T_j$ contains both $\varepsilon$ and the former triangle. Therefore, at least one flip is incident to $\beta$ along $(T_i)_{j\leq{i}<j'}$. One can see on the right of Figure~\ref{NOFG.sec.3.fig.6} that the flip between $T_{j'-1}$ and $T_{j'}$ is also incident to $\beta$. Lemma \ref{NOFG.sec.3.lem.1} then yields
\begin{equation}\label{NOFG.sec.3.eq.2}
d(T_j,A_n^+)\geq{d(T_j\contract\beta,A_n^+\contract\beta)+2}\mbox{.}
\end{equation}

Since $\phi$ crosses both $\delta$ and $\zeta$ (which are the two interior arcs of $A_n^-$ bounding the triangle incident to $\beta$), the triangles of $A_n^-$ and $T_{j-1}$ incident to $\beta$ are necessarily distinct. It then follows from Lemma \ref{NOFG.sec.3.lem.1} that
\begin{equation}\label{NOFG.sec.3.eq.3}
d(A_n^-,T_j)\geq{d(A_n\contract\beta,T_j\contract\beta)+1}\mbox{.}
\end{equation}

Now consider the arc $\eta$ other than $\zeta$ that bounds the triangle incident to $\beta$. By assumption, this arc is crossed by $\phi$. One can see in Figure~\ref{NOFG.sec.3.fig.2} that $\eta$ still bounds the triangle of $A_n^-\contract\beta$ incident to $\gamma$. Moreover, after $\beta$ has been contracted, $\phi$ still crosses $\eta$. As $\phi$ belongs to $T_j\contract\beta$, the triangles of $A_n^-\contract\beta$ and $T_j\contract\beta$ incident to $\gamma$ cannot be the same and according to Lemma \ref{NOFG.sec.3.lem.1},
\begin{equation}\label{NOFG.sec.3.eq.4}
d(A_n^-\contract\beta,T_j\contract\beta)\geq{d(A_n^-\contract\beta\contract\gamma,T_j\contract\beta\contract\gamma)+1}\mbox{.}
\end{equation}

Finally, observe that the triangles of $T_j\contract\beta$ and $A_n^+\contract\beta$ incident to $\gamma$ are different. Indeed, the two interior arcs of $A_n^+\contract\beta$ bounding the latter triangle cross the arc $\varepsilon$ which is contained in $T_j\contract\beta$. Hence, by Lemma \ref{NOFG.sec.3.lem.1},
\begin{equation}\label{NOFG.sec.3.eq.5}
d(T_j\contract\beta,A_n^+\contract\beta)\geq{d(T_j\contract\beta\contract\gamma,A_n^+\contract\beta\contract\gamma)+1}\mbox{.}
\end{equation}

It suffices to combine the inequalities (\ref{NOFG.sec.3.eq.2}), (\ref{NOFG.sec.3.eq.3}), (\ref{NOFG.sec.3.eq.4}), and (\ref{NOFG.sec.3.eq.5}) in order to lower bound the distance of $A_n^-$ and $A_n^+$ in terms of the distance from $T_j\contract\beta\contract\gamma$ to $A_n^-\contract\beta\contract\gamma$ and to $A_n^+\contract\beta\contract\gamma$. Expressing the latter two triangulations with (\ref{NOFG.sec.3.eq.1}) and using the triangle inequality proves the lemma.
\end{proof}

We now examine the case when the only arcs of $A_n^-$ crossed by $\phi$ are $\delta$ and $\zeta$. Consider a triangulation $Y^-_1$ of $\Sigma_1^\otimes$ and the path $(T_i)_{0\leq{i}\leq{l+4}}$ shown in Figure~\ref{NOFG.sec.3.fig.7}, that starts at triangulation $A_n^-$. The first flip in that path removes $\zeta$ and introduces the arc $\zeta'$. The second flip is incident to $\alpha$ and it introduces an arc $\phi$ such that the arcs of $A_n^-$ crossed by $\phi$ are precisely $\delta$ and $\zeta$. The next $l$ flips form a geodesic path between two triangulations of the surface $\Sigma^\otimes_2$ bounded by $\phi$ and $\zeta'$. These two triangulations of $\Sigma^\otimes_2$ each contain a loop arc twice incident to a different vertex of $\phi$ and they respectively surround $X_1^-$ within $T_2$ and $Y^-_1$ within $T_{l+2}$. The last two flips in this path remove $\zeta'$ and then $\phi$. Note that several geodesic paths from $T_2$ to $T_{l+2}$ may be possible. Here, we use a fixed such path. We will need the following straightforward observation.
\begin{figure}
\begin{centering}
\includegraphics[scale=1]{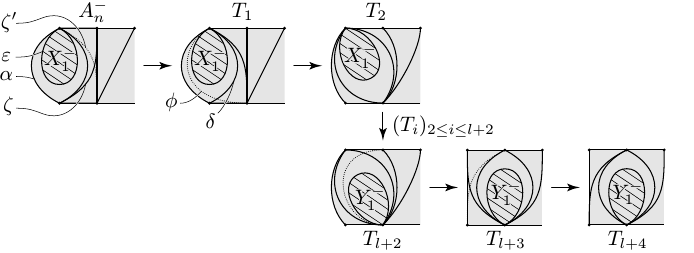}
\caption{The geodesic path in Lemma \ref{NOFG.sec.3.lem.3.5}.}\label{NOFG.sec.3.fig.7}
\end{centering}
\end{figure}

\begin{rem}\label{NOFG.sec.3.rem.1}
Consider some geodesic path in $\mathcal{MF}(\Sigma_n)$ and some arc flipped along it. If the two triangles incident to this arc are not modified by earlier flips, then this arc can be flipped first along the considered geodesic.
\end{rem}

 In order to prove the following, we will repeatedly use Remark \ref{NOFG.sec.3.rem.1}.

\begin{lem}\label{NOFG.sec.3.lem.3.5}
Assume that $n$ is at least $4$ and consider a geodesic from $A_n^-$ to $A_n^+$ in $\mathcal{MF}(\Sigma_n)$ along which exactly two flips are incident to $\alpha$. Let $\phi$ be the arc introduced by the first of these two flips. If
\begin{enumerate}
\item[(i)] $\delta$ and $\zeta$ are the only two arcs of $A_n^-$ crossed by $\phi$ and
\item[(ii)] $\beta$ and $\gamma$ are each incident to at most three flips along the geodesic,
\end{enumerate}
then there exists a triangulation $Y^-_1$ of $\Sigma_1^\otimes$ and a geodesic path $(T_i)_{0\leq{i}\leq{k}}$ from $A_n^-$ to $A_n^+$ in $\mathcal{MF}(\Sigma_n)$ that starts with the $l+2$ flips from Figure~\ref{NOFG.sec.3.fig.7}.
\end{lem}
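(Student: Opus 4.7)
The approach is to take the given geodesic $(T_i)_{0\le i\le k}$ and rearrange its flips via Remark \ref{NOFG.sec.3.rem.1} into one of the desired form. By Proposition \ref{NOFG.sec.3.prop.2}, the first flip incident to $\alpha$, occurring at some step $j$, removes $\delta$ and introduces $\phi$. Since by (i) the arc $\phi$ crosses $\zeta$ in $A_n^-$ and $\phi$ cannot cross any arc present in $T_{j-1}$, the arc $\zeta$ must be absent from $T_{j-1}$, so there is a smallest step $i<j$ at which $\zeta$ is flipped; let $\zeta^*$ be the arc introduced at step $i$.

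The crucial claim is that $\zeta^*=\zeta'$ and that the two triangles bounded by $\zeta$ at step $i-1$ are the very triangles bounded by $\zeta$ in $A_n^-$. I would argue by contradiction: if the latter failed, then at least one flip incident to $\beta$ or $\gamma$ must already have occurred before step $i$. Combining such a prior flip with the flip at step $i$ itself (which is incident to both $\beta$ and $\gamma$), the second flip incident to $\alpha$ at some later step $j'$, and the additional flips forced thereafter in order to recover the configuration of $A_n^+$ around $\beta$ and $\gamma$, the total number of flips incident to one of $\beta$ or $\gamma$ would exceed three, contradicting (ii). Carrying out this bookkeeping carefully, in the spirit of the analysis in the proof of Lemma \ref{NOFG.sec.3.lem.3} but more intricate, is the main obstacle of the proof.

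Once the claim is established, Remark \ref{NOFG.sec.3.rem.1} permits moving the flip $\zeta\to\zeta'$ to the first position of the geodesic. In the rearranged path, the two triangles bounded by $\delta$ at the new step $j-1$ are still those of $A_n^-$ incident to $\delta$, except that the side previously equal to $\zeta$ is now $\zeta'$, and in particular they have not been affected by the single prior flip. A second application of Remark \ref{NOFG.sec.3.rem.1} then moves the flip $\delta\to\phi$ into second position, and after these two flips the triangulation matches the triangulation $T_2$ depicted in Figure \ref{NOFG.sec.3.fig.7}.

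At this stage the arcs $\phi$ and $\zeta'$ together with appropriate boundary arcs bound a subsurface $R$ of $\Sigma_n$ homeomorphic to $\Sigma^\otimes_2$, inside which lies a loop arc enclosing $X_1^-$. The remaining flips of the rearranged geodesic split into those modifying arcs of $R$ and those modifying arcs entirely outside $R$, and the two classes concern pairwise disjoint triangle pairs. Applying Remark \ref{NOFG.sec.3.rem.1} once more, every interior flip of $R$ that precedes the eventual removal of $\zeta'$ and $\phi$ can be moved into the positions immediately following the second flip. Let $l$ denote the number of such interior flips and let $Y_1^-$ be the triangulation of $\Sigma_1^\otimes$ enclosed by the loop in the resulting triangulation $T_{l+2}$. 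Since the overall path was a geodesic and the $R$-flips and complement-flips are independent, these $l$ interior flips must themselves form a geodesic in the flip-graph of $R$ between the two triangulations of $\Sigma^\otimes_2$ described in Figure \ref{NOFG.sec.3.fig.7}, yielding the required initial $l+2$ flips and completing the proof.
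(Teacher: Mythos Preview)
Your overall strategy---rearranging the given geodesic via Remark~\ref{NOFG.sec.3.rem.1}---is the same as the paper's, but two of your steps are left genuinely incomplete.

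First, the ``crucial claim'' that $\zeta^*=\zeta'$ with the two triangles incident to $\zeta$ unchanged at step $i-1$ is the heart of the matter, and you explicitly defer it (``carrying out this bookkeeping carefully \ldots\ is the main obstacle''). Your sketched contradiction is also shakier than it looks: in the very scenario you are ruling out, the flip of $\zeta$ at step~$i$ need not be incident to $\gamma$ (if $\eta$ has already been flipped, the triangle on the $\gamma$-side of $\zeta$ may no longer contain $\gamma$), so the count you outline does not obviously go through. The paper avoids this entirely by counting flips incident to $\gamma$ rather than tracking when $\zeta$ first disappears. Under assumption~(i) the flip at step~$j$ is forced to be a specific one (Figure~\ref{NOFG.sec.3.fig.8}, left), and one checks that the triangles incident to $\gamma$ in $A_n^-$, $T_{j-1}$, $T_j$, and $A_n^+$ are pairwise distinct. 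Hypothesis~(ii) then gives exactly three $\gamma$-flips in total, hence exactly one along $(T_i)_{0\le i<j}$; since $\delta$ is present throughout that portion, this single flip is forced to be $\zeta\to\zeta'$, and Remark~\ref{NOFG.sec.3.rem.1} puts it in position~$1$.

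Second, your final rearrangement does not establish that $T_{l+2}$ has the form required by Figure~\ref{NOFG.sec.3.fig.7}: you must show the triangulation of $R\cong\Sigma_2^\otimes$ at that moment contains a loop bounding some triangulation $Y_1^-$ of $\Sigma_1^\otimes$, and merely gathering the $R$-interior flips does not give this. The paper again uses a flip-count, this time on $\beta$: the first flip (at step~$1$) and the second $\alpha$-flip (at step~$j''$) are both incident to $\beta$, and the $\beta$-triangles in $T_j$ and $T_{j''-1}$ differ, so by~(ii) there is a unique $\beta$-flip at some step~$j'''$ strictly between $j$ and $j''$. That flip is determined (Figure~\ref{NOFG.sec.3.fig.8}, right), and its shape forces $T_{j'''-1}$ to contain the desired loop. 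Since $\phi$ and $\zeta'$ are unchanged on $(T_i)_{j\le i<j'''}$, the $R$-interior flips along that stretch form a geodesic between the two prescribed triangulations of $\Sigma_2^\otimes$, and Remark~\ref{NOFG.sec.3.rem.1} moves them into positions $3$ through $l+2$.
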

\begin{proof}
Let $(T_i)_{0\leq{i}\leq{k}}$ denote the considered geodesic path from $A_n^-$ to $A_n^+$ in $\mathcal{MF}(\Sigma_n)$. Assume that the first flip incident to $\alpha$ along this geodesic transforms $T_{j-1}$ into $T_j$. According to Proposition \ref{NOFG.sec.3.prop.2}, this flip must be as shown on the left of Figure~\ref{NOFG.sec.3.fig.6} where $a$, $b$, and $c$ denote the vertices of the triangles of $T_j$ incident to $\alpha$, $\beta$, and $\gamma$. Under the additional assumption that the only two arcs of $A_n^-$ crossed by $\phi$ are $\delta$ and $\zeta$, there is only one possibility for $a$, $c$, and $\phi$: the flip between $T_{j-1}$ and $T_j$ must be as shown on the left of Figure~\ref{NOFG.sec.3.fig.8}. Now observe that the triangles of $A_n^-$, $T_{j-1}$, $T_j$, and $A_n^+$ incident to $\gamma$ are pairwise distinct. Hence, assuming that $\gamma$ is incident to at most three flips along $(T_i)_{0\leq{i}\leq{k}}$, one obtains that there is a single flip incident to $\gamma$ along $(T_i)_{0\leq{i}<j}$, say the flip that transforms $T_{j'-1}$ into $T_{j'}$. As $\delta$ is not modified along $(T_i)_{0\leq{i}<j}$, this flip must replace $\zeta$ with the arc $\zeta'$ as shown in the center of Figure~\ref{NOFG.sec.3.fig.8}. According to Remark \ref{NOFG.sec.3.rem.1}, one can assume that $j'$ is equal to $1$. Now recall that there is no flip incident to either $\alpha$ or $\gamma$ along $(T_i)_{j'\leq{i}<j}$. Hence, the two triangles incident to $\delta$ are not modified along this portion of the path and by Remark~\ref{NOFG.sec.3.rem.1} again, we can assume that $j$ is equal to $2$. In particular, the first two flips along $(T_i)_{0\leq{i}\leq{k}}$ are now precisely the ones shown in Figure~\ref{NOFG.sec.3.fig.7}.

By assumption, there is a unique flip incident to $\alpha$ along $(T_i)_{2\leq{i}\leq{k}}$, say the one between $T_{j''-1}$ and $T_{j''}$. This flip should introduce the triangle of $A_n^+$ incident to $\alpha$ and therefore must be as shown on the right of Figure \ref{NOFG.sec.3.fig.6}. 
\begin{figure}
\begin{centering}
\includegraphics[scale=1]{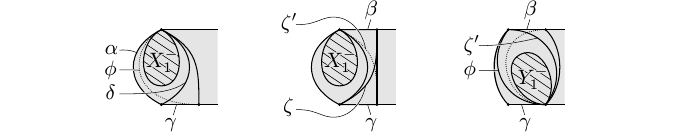}
\caption{Three flips considered in the proof of Lemma \ref{NOFG.sec.3.lem.3.5}. For each of them, the introduced arc is dotted.}\label{NOFG.sec.3.fig.8}
\end{centering}
\end{figure}
Note that this flip is incident to $\beta$ as well and that the triangles of $T_j$ and $T_{j''-1}$ incident to $\beta$ are different. In particular, as least one flip should be incident to $\beta$ along $(T_i)_{j\leq{i}<j''}$. Under the assumption that at most three flips are incident to $\beta$ along $(T_i)_{0\leq{i}\leq{k}}$, it follows that there is exactly one flip incident to $\beta$ along $(T_i)_{j\leq{i}<j''}$ because the first flip along this path and the one that transforms $T_{j''-1}$ into $T_{j''}$ are also incident to $\beta$. Say the unique flip incident to $\beta$ along $(T_i)_{j\leq{i}<j''}$ changes $T_{j'''-1}$ into $T_{j'''}$. This flip must be as shown on the right of Figure~\ref{NOFG.sec.3.fig.8}. Observe in particular that in $T_{j'''-1}$, the arcs $\phi$ and $\zeta'$ surround a triangulation of $\Sigma_2^\otimes$ that contains a loop arc bounding a certain triangulation $Y_1^-$ of $\Sigma_1^\otimes$. Moreover the arcs $\phi$ and $\zeta'$ are not modified along $(T_i)_{j\leq{i}<j'''}$. Therefore, the flips that have been performed in the subsurface $\Sigma^\otimes_2$ surrounded by $\phi$ and $\zeta'$ form a geodesic path from the triangulation of this subsurface contained in $T_j$ to the one contained in $T_{j'''-1}$. By Remark \ref{NOFG.sec.3.rem.1}, it can be assumed that these flips are the $l$ flips that take place after the first two along $(T_i)_{0\leq{i}\leq{k}}$. Also by this remark, we can assume that $j'''$ is equal to $l+3$ and that $j''$ is equal to $l+4$ which completes the proof.
\end{proof}

We prove the following as a consequence of Lemma \ref{NOFG.sec.3.lem.3.5}.

\begin{lem}\label{NOFG.sec.3.lem.4}
Assume that $n$ is at least $3$ and consider a geodesic path from $A_n^-$ to $A_n^+$ in $\mathcal{MF}(\Sigma_n)$ along which exactly two flips are incident to $\alpha$. Further consider the arc $\phi$ introduced by the first of these two flips. If $\delta$ and $\zeta$ are the only two interior arcs of $A_n^-$ crossed by $\phi$, then either
\begin{equation}\label{NOFG.sec.3.lem.4.eq.0}
d(A_n^-,A_n^+)\geq{d(A_{n-2}^-,A_{n-2}^+)+5}
\end{equation}
where $A_n^\pm$ and $A_{n-2}^\pm$ are built from the same triangulations $X_1^\pm$ and $O_1$ or $n$ is at least $4$ and there exists a triangulation $Y_1^-$ of $\Sigma_1^\otimes$ such that
\begin{equation}\label{NOFG.sec.3.lem.4.eq.0.1}
d(A_n^-,A_n^+)\geq{d(A_{n-3}^-,A_{n-3}^+)+d(X_1^-,Y_1^-)+8}
\end{equation}
where $A_n^\pm$ and $A_{n-3}^+$ are built from the same triangulations $X_1^\pm$ and $O_1$ while $A_{n-3}^-$ is built from $Y_1^-$ and $O_1$ instead of $X_1^-$ and $O_1$.
\end{lem}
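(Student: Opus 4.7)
The plan is to split the argument on whether assumption (ii) of Lemma~\ref{NOFG.sec.3.lem.3.5} holds along the considered geodesic.

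\emph{Case 1: some arc among $\beta$ and $\gamma$ is incident to at least four flips.} Without loss of generality assume that $\gamma$ is so incident, the case of $\beta$ being symmetric. Lemma~\ref{NOFG.sec.3.lem.1} applied to $\gamma$ yields
$$
d(A_n^-,A_n^+)\geq d(A_n^-\contract\gamma,A_n^+\contract\gamma)+4\mbox{.}
$$
The triangles of $A_n^-$ and of $A_n^+$ incident to $\beta$ differ (as seen from Figure~\ref{NOFG.sec.3.fig.2}) and are left untouched by the contraction of $\gamma$; hence every geodesic from $A_n^-\contract\gamma$ to $A_n^+\contract\gamma$ in $\mathcal{MF}(\Sigma_{n-1})$ contains at least one flip incident to $\beta$. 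A second application of Lemma~\ref{NOFG.sec.3.lem.1} combined with~(\ref{NOFG.sec.3.eq.1}) then delivers
$$
d(A_n^-\contract\gamma,A_n^+\contract\gamma)\geq d(A_{n-2}^-,A_{n-2}^+)+1\mbox{,}
$$
which establishes bound~(\ref{NOFG.sec.3.lem.4.eq.0}).

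\emph{Case 2: each of $\beta$ and $\gamma$ is incident to at most three flips.} Lemma~\ref{NOFG.sec.3.lem.3.5}, whose hypotheses require $n\geq4$, then supplies a triangulation $Y_1^-$ of $\Sigma_1^\otimes$ and a geodesic $(T_i)_{0\leq i\leq k}$ from $A_n^-$ to $A_n^+$ whose first $l+4$ flips are those of Figure~\ref{NOFG.sec.3.fig.7}. I will write $k=4+l+(k-l-4)$ and bound each piece. The $l$ intermediate flips form a geodesic between two triangulations of the subsurface $\Sigma_2^\otimes$ bounded by $\phi$ and $\zeta'$: one contains a loop arc incident to one vertex of $\phi$ and surrounding $X_1^-$, while the other contains a loop arc incident to the opposite vertex of $\phi$ and surrounding $Y_1^-$. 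Beyond the $d(X_1^-,Y_1^-)$ flips needed to transform $X_1^-$ into $Y_1^-$ inside the loop, at least two further flips are required to move the loop to the other vertex of $\phi$ and to restructure the surrounding triangles, yielding $l\geq d(X_1^-,Y_1^-)+2$.

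For the final sub-path of length $k-l-4$ from $T_{l+4}$ to $A_n^+$, tracking the flips of Figure~\ref{NOFG.sec.3.fig.7} through the zigzag and loop structure shows that contracting $\alpha$, $\beta$, and $\gamma$ in $T_{l+4}$ produces the triangulation $A_{n-3}^-$ built from $Y_1^-$ and $O_1$ (via~(\ref{NOFG.sec.3.eq.0.5}) and~(\ref{NOFG.sec.3.eq.1})), while the same contractions in $A_n^+$ yield $A_{n-3}^+$. The triangle of $T_{l+4}$ incident to $\alpha$ coincides with that of $A_n^+$ (so no flips incident to $\alpha$ need be counted), while the triangles of $T_{l+4}$ and of $A_n^+$ incident to $\beta$, and those incident to $\gamma$, respectively differ. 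Three successive applications of Lemma~\ref{NOFG.sec.3.lem.1} to $\alpha$, $\beta$, and $\gamma$ then give $k-l-4\geq d(A_{n-3}^-,A_{n-3}^+)+2$. Summing the three pieces yields bound~(\ref{NOFG.sec.3.lem.4.eq.0.1}).

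The main obstacle is the structural identification of the contractions of $T_{l+4}$ by $\alpha$, $\beta$, $\gamma$ with $A_{n-3}^-$ built from $Y_1^-$ and $O_1$: this requires carefully unwinding the four distinguished flips of Figure~\ref{NOFG.sec.3.fig.7} and the effect of the intermediate geodesic on the loop region. The ancillary lower bound $l\geq d(X_1^-,Y_1^-)+2$ is also somewhat delicate, as it demands a careful analysis of how a loop arc attached at two distinct vertices of $\phi$ in $T_2$ and in $T_{l+2}$ must be moved and re-triangulated along the $l$ intermediate flips.
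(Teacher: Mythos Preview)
Your Case~1 is essentially the paper's argument (the paper contracts $\beta$ first and then $\gamma$, but notes the symmetry just as you do).

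In Case~2 your decomposition $k=4+l+(k-l-4)$ is the right one, but your distribution of the eight units differs from the paper's, and the two places where you depart are precisely the two you flag as delicate.

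For the bound on $l$, the paper does not argue ad~hoc. The two triangulations of $\Sigma_2^\otimes$ at the ends of the $l$-flip geodesic are exactly of the form $A_2^\pm$ for the one-holed surface $\Sigma^\otimes$ (built from $X_1^-$ and from $Y_1^-$ respectively), so Proposition~\ref{NOFG.sec.3.prop.2.5} applies verbatim and gives $l\geq d(X_1^-,Y_1^-)+3$. Your informal ``move the loop and restructure'' argument is not a proof, and in any case you are underselling the bound by one.

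For the tail, the paper only extracts $d(T_{l+4},A_n^+)\geq d(A_{n-3}^-,A_{n-3}^+)+1$, not $+2$. It contracts in the order $\gamma,\beta,\alpha$ and observes a single concrete fact: after contracting $\gamma$ and $\beta$, the triangle incident to $\alpha$ in $T_{l+4}\contract\gamma\contract\beta$ is bounded by a loop arc, whereas the corresponding triangle in $A_n^+\contract\gamma\contract\beta$ is not. That one difference yields the $+1$ via Lemma~\ref{NOFG.sec.3.lem.1}. Your route instead asserts that the triangles incident to $\beta$ and to $\gamma$ each differ between $T_{l+4}$ and $A_n^+$ (and continue to differ after contracting $\alpha$), but you do not verify this; in particular the $(l+4)$-th flip is itself incident to $\beta$, so the status of the $\beta$-triangle in $T_{l+4}$ is not obvious without inspection.

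In short, the paper obtains $8=(l+4)+1$ with $l\geq d(X_1^-,Y_1^-)+3$, whereas you aim for $8=4+2+2$; the paper's split avoids both of the verifications you identify as obstacles. I would recommend invoking Proposition~\ref{NOFG.sec.3.prop.2.5} for $l$ and then checking only the single loop/non-loop discrepancy at $\alpha$ after contracting $\gamma$ and $\beta$.
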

\begin{proof}
Assume that $\phi$ crosses $\delta$ and $\zeta$ but no other interior arc of $A_n^-$. First consider the case when $\beta$ is incident to at least four flips along the considered geodesic path. In that case it follows from Lemma \ref{NOFG.sec.3.lem.1} that
$$
d(A_n^-,A_n^+)\geq{d(A_n^-\contract\beta,A_n^+\contract\beta)+4}
$$
and since the triangles of $A_n^-\contract\beta$ and $A_n^+\contract\beta$ incident to $\gamma$ are different, the same lemma and Equation (\ref{NOFG.sec.3.eq.1}) prove (\ref{NOFG.sec.3.lem.4.eq.0}) where $A_n^\pm$ and $A_{n-2}^\pm$ are built from the same triangulations $X_1^\pm$ and $O_1$. Now observe that
$$
A_n^\pm\contract\beta\contract\gamma=A_n^\pm\contract\gamma\contract\beta\mbox{.}
$$

As the triangles of $A_n^-\contract\gamma$ and $A_n^+\contract\gamma$ incident to $\beta$ are different, the same argument also proves (\ref{NOFG.sec.3.lem.4.eq.0}) when $\gamma$ is incident to at least four flips along the considered geodesic path. Now assume that $\beta$ and $\gamma$ are both incident to at most three flips along the geodesic. In that case, by Lemma~\ref{NOFG.sec.3.lem.3.5}, $n$ is at least $4$ and there exists a triangulation $Y_1^-$ of $\Sigma_1^\otimes$ and a geodesic path $(T_i)_{0\leq{i}\leq{k}}$ that starts with the $l+4$ flips shown in Figure \ref{NOFG.sec.3.fig.7}. Now observe that
$$
T_{l+4}\contract\gamma\contract\beta\contract\alpha=A_{n-3}^-
$$
where $A_{n-3}^-$ is built from $Y_1^-$ and $O_1$ instead of $X_1^-$ and $O_1$. Moreover,
$$
A_n^+\contract\gamma\contract\beta\contract\alpha=A_{n-3}^+
$$
where $A_n^+$ and $A_{n-3}^+$ are both built from $X_1^+$. However, the triangles incident to $\alpha$ in $T_{l+4}\contract\gamma\contract\beta$ and $A_n^+\contract\gamma\contract\beta$ are distinct. Indeed, the former triangle is bounded by a loop arc while the latter is not. Therefore,
$$
d(T_{l+4},A_n^+)\geq{d(A_{n-3}^-,A_{n-3}^+)+1}
$$
and since $T_{l+4}$ is reached after $l+4$ flips along a geodesic from $A_n^-$ to $A_n^+$,
\begin{equation}\label{NOFG.sec.3.lem.4.eq.1}
d(A_n^-,A_n^+)\geq{d(A_{n-3}^-,A_{n-3}^+)+l+5}
\end{equation}
where $A_n^\pm$ and $A_{n-3}^+$ are built from the same triangulations $X_1^\pm$ and $O_1$ while $A_{n-3}^-$ is built from $Y_1^-$ and $O_1$. Finally, by Proposition \ref{NOFG.sec.3.prop.2.5},
$$
l\geq{d(X_1^-,Y_1^-)+3}
$$
and combining this with (\ref{NOFG.sec.3.lem.4.eq.1}) completes the proof.
\end{proof}

We are now ready to prove Theorem \ref{NOFG.sec.3.thm.1}.

\begin{proof}[Proof of Theorem \ref{NOFG.sec.3.thm.1}]
If $\Sigma$ is one holed, we choose $X_1^-$ and $X_1^+$ such that
\begin{equation}\label{NOFG.sec.3.thm.1.eq.0}
d(X_1^-,X_1^+)=\mathrm{diam}(\mathcal{MF}(\Sigma_1))\mbox{.}
\end{equation}

If $\Sigma$ is not one-holed, we take any two triangulations of $\Sigma^\otimes_1$ for $X_1^-$ and $X_1^+$. It suffices to show that the distance of $A_n^-$ and $A_n^+$ in $\mathcal{MF}(\Sigma_n)$ satisfies
$$
d(A_n^-,A_n^+)\geq\biggl\lfloor\frac{5}{2}n\biggr\rfloor-2+d(X_1^-,X_1^+)
$$
when $\Sigma$ is one-holed and that it is at least the right-hand side of (\ref{NOFG.sec.3.eq.0}) when it is not. We proceed by induction on $n$. If $n$ is equal to $1$, the result is immediate when $\Sigma$ is one-holed because $A_1^-$ and $A_1^+$ coincide with $X_1^-$ and $X_1^+$. If $\Sigma$ is not one-holed, it is also immediate because the right-hand side of (\ref{NOFG.sec.3.eq.0}) vanishes. If $n$ is equal to $2$, the result follows from Proposition \ref{NOFG.sec.3.prop.2.5}.

Now assume that $n$ is at least $3$ and let us examine the number of flips incident to $\alpha$ along the geodesic path $(T_i)_{0\leq{i}\leq{k}}$ considered above. If there are three such flips, then one obtains the desired inequality by induction from Lemma \ref{NOFG.sec.3.lem.0}. If there are at most two flips incident to $\alpha$ along $(T_i)_{0\leq{i}\leq{k}}$, then according to Proposition~\ref{NOFG.sec.3.prop.2} there should be exactly two such flips and we consider the arc $\phi$ introduced by the first of these flips. If $\phi$ crosses $\delta$ and no other interior arc of $A_n^-$ or it crosses $\delta$, $\zeta$ and at least one other interior arc of $A_n^-$, then the desired inequality follows by induction from Lemmas~\ref{NOFG.sec.3.lem.2} and~\ref{NOFG.sec.3.lem.3}.

Finally, assume that the only two arcs of $A_n^-$ crossed by $\phi$ are $\delta$ and $\zeta$. According to Lemma \ref{NOFG.sec.3.lem.4}, there are just two possibilities. The first possibility is that (\ref{NOFG.sec.3.lem.4.eq.0}) hold and the result follows by induction from that inequality. The other possibility is that $n$ is at least $4$ and 
\begin{equation}\label{NOFG.sec.3.thm.1.eq.3}
d(A_n^-,A_n^+)\geq{d(A_{n-3}^-,A_{n-3}^+)+d(X_1^-,Y_1^-)+8}
\end{equation}
for some triangulation $Y_1^-$ of $\Sigma^\otimes$, where $A_n^\pm$ and $A_{n-3}^+$ are built from the same triangulations $X_1^\pm$ and $O_1$ while $A_{n-3}^-$ is built from $Y_1^-$ and $O_1$ instead of $X_1^-$ and $O_1$. Note in particular, that at the moment, the theorem holds when $n$ is equal to $3$. Hence, if $\Sigma$ is not one-holed, it sufficees to lower bound the distance of $X_1^-$ and $Y_1^-$ by $0$ and the result follows by induction from (\ref{NOFG.sec.3.thm.1.eq.3}).

If $\Sigma$ is one-holed, then by induction,
\begin{equation}\label{NOFG.sec.3.thm.1.eq.4}
d(A_{n-3}^-,A_{n-3}^+)\geq\biggl\lfloor\frac{5}{2}n-\frac{15}{2}\biggr\rfloor-2+d(Y_1^-,X_1^+)
\end{equation}
because $A_{n-3}^-$ is built from $Y_1^-$ instead of $X_1^-$. By the triangle inequality,
$$
d(X_1^-,Y_1^-)+d(Y_1^-,X_1^+)\geq{d(X_1^-,X_1^+)}\mbox{.}
$$

Combining this with (\ref{NOFG.sec.3.thm.1.eq.0}), (\ref{NOFG.sec.3.thm.1.eq.3}), and (\ref{NOFG.sec.3.thm.1.eq.4}) provides the desired inequality.
\end{proof}

\section{An upper bound on the diameter of $\mathcal{F}(\mathrm{M}_n)$}\label{NOFG.sec.4}

Recall that $\mathrm{M}$ denotes the M{\"o}bius strip without punctures. By Theorem \ref{NOFG.sec.3.thm.1},
$$
\mathrm{diam}(\mathcal{F}(\mathrm{M}_n))\geq\biggl\lfloor\frac{5}{2}n\biggr\rfloor-2\mbox{.}
$$

In this section, we shall prove that this bound is sharp up to an additive constant, which implies that the lower bound on $c_\Sigma$ stated by Theorem \ref{NOFG.sec.1.thm.1} is sharp when $\Sigma$ is a non-orientable filling surface. We will use the representation of $\mathrm{M}$ as a disk with an inserted cross-cap as on the left of Figure~\ref{NOFG.sec.1.fig.1}.

We collect some observations about the arcs bounding a triangle of a triangulation of $\mathrm{M}_n$. Recall that $\mathcal{P}$ is the set of the marked points placed in the boundary of $\mathrm{M}$ in order to build $\mathrm{M}_n$ and that these points serve as the vertices of the triangulations of $\mathrm{M}_n$. In the following statement, a non-boundary arc $\alpha$ between two points from $\mathcal{P}$ is \emph{non-separating} when $\mathrm{M}_n\mathord{\setminus}\alpha$ is connected and \emph{separating} otherwise. In the cross-cap model of the M{\"o}bius strip, $\alpha$ is non-separating precisely when it goes through the cross-cap.

\begin{prop}\label{NOFG.sec.4.prop.1}
If $t$ is a triangle of a triangulation $T$ of $\mathrm{M}_n$, then
\begin{enumerate}
\item[(i)] $t$ is bounded by at least two arcs of $T$,
\item[(ii)] if $t$ is bounded by exactly two arcs of $T$ then both arcs are loops and exactly one of these loops is non-separating, and
\item[(iii)] if $t$ is bounded by three different arcs of $T$ then either exactly two of these arcs or none of them are non-separating.
\end{enumerate}
\end{prop}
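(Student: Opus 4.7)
The plan is to combine a flag-counting argument, a local link analysis at a boundary vertex, and a $\mathbb{Z}/2$-intersection count with the cross-cap of $\mathrm{M}$.

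For \emph{(i)}, I would note that every triangle of $T$ carries three edge-flags while every arc of $T$ carries at most two such flags (one per interior side, and only one for a boundary arc). A triangle bounded by a single arc $\alpha$ would hand $\alpha$ all three of its flags, exceeding two, which is impossible.

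For \emph{(ii)}, the same flag count forces one bounding arc $\beta$ to appear twice in $\partial t$ and the other $\alpha$ once, so the two $\beta$-sides of $t$ share a single corner $c$ whose image is a vertex $v \in \mathcal{P}$. I would then examine the link of $v$: since $v$ lies on the boundary of $\mathrm{M}$, it is a half-arc whose subdivision by arc-endings at $v$ realizes the incident triangle corners as sectors. Both sides of $\beta$ near $v$ must sit inside the single sector corresponding to $c$, which is possible only if $\beta$ contributes two distinct endings at $v$, i.e., $\beta$ is a loop at $v$; otherwise the two sides of $\beta$ lie in two different sectors on either side of the unique $\beta$-ending at $v$. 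Tracing endpoints of $\beta$ around $\partial t$ then forces the two remaining corners of $t$ to coincide with $v$ as well, so $\alpha$ is also a loop at $v$. To single out the non-separating loop, I would use the $\mathbb{Z}/2$-count of transverse intersections with the cross-cap, equivalently the $H_1(\mathrm{M}; \mathbb{Z}/2)$-class of a loop. Since $\partial t$ bounds the disk $t$, this count is zero, and the congruence $|\partial t| \equiv |\alpha| + 2|\beta| \equiv |\alpha| \pmod{2}$ forces $\alpha$ to be separating. On $\mathrm{M}$, every two-sided simple loop is separating; if $\beta$ were two-sided, then $\mathrm{M}_n \mathord{\setminus} \beta$ would have two connected components, contradicting that the connected triangle $t$ meets $\beta$ from both sides. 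Hence $\beta$ is one-sided and non-separating.

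For \emph{(iii)}, the same cross-cap parity argument applied to the three distinct bounding arcs of $t$ gives $|e_1| + |e_2| + |e_3| \equiv 0 \pmod{2}$, so the number of non-separating arcs among them is even, hence either zero or two.

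The main obstacle is the link analysis in (ii) that forces $\beta$ to be a loop; it rests on a careful bookkeeping of how the endings of arcs at a boundary vertex partition the half-arc link into sectors, each realized as a single triangle corner. Once this step and the $\mathbb{Z}/2$-intersection invariant are in hand, the rest is essentially routine.
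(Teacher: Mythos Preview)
The paper states this proposition without proof, treating it as a list of elementary observations, so there is no argument in the text to compare yours against. Your proof is correct: the flag count in (i), the link/sector analysis at the shared corner in (ii), and the $\mathbb{Z}/2$ parity of crossings with the core for (ii) and (iii) all go through as you outline, and the ``obstacle'' you flag in (ii) is genuinely the only step requiring care.

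A minor simplification is available at the end of (ii). Once you know that both sides of $\beta$ border the single triangle $t$, you can conclude directly that $\beta$ is non-separating: were $\mathrm{M}_n\mathord{\setminus}\beta$ disconnected, the two local sides of $\beta$ would lie in distinct components, while the connected interior of $t$ meets both. Your route through sidedness implicitly passes from the loop arc $\beta$ (an embedded interval, hence with trivial normal bundle) to the closed curve $\beta\cup\{v\}$, which is the object that can be one-sided on $\mathrm{M}$; the argument still lands because it is the latter that governs separation, but the direct contradiction avoids this small conflation.
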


The situation described by the second assertion in the statement of Proposition~\ref{NOFG.sec.4.prop.1} is illustrated on the left of Figure~\ref{NOFG.sec.1.fig.1} where the triangle $t$ is the one bounded by the two loop arcs. Note that the non-separating loop bounding $t$ serves as two edges of $t$. Consider distinct marked points $u$ and $v$ in $\mathcal{P}$. Cutting the boundary of $\mathrm{M}$ at $u$ and $v$ splits it into two arcs which we will denote by $\beta_{u,v}$ and $\beta_{v,u}$ with the convention that $\beta_{u,v}$ is the portion of the boundary of $\mathrm{M}$ that lies clockwise from $u$ and counterclockwise from $v$. We call the \emph{clockwise distance from $u$ to $v$} the number of boundary arcs of $\mathrm{M}_n$ contained in $\beta_{u,v}$ or equivalently the number of points in $\mathcal{P}\cap\beta_{u,v}$ minus one. This quantity will be denoted by $d^-(u,v)$. Likewise, the \emph{counter-clockwise distance} $d^+(u,v)$ from $u$ to $v$ will be the number of boundary arcs of $\mathrm{M}_n$ contained in $\beta_{v,u}$.

Now assume that $u$ and $v$ are the two extremities of an arc $\alpha$. If $\alpha$ is a non-separating arc, then its length will be defined as
$$
\ell(\alpha)=\min\bigl\{d^-(u,v),d^+(u,v)\bigr\}\mbox{.}
$$

Note that in this definition, the right-hand side remains the same if one exchanges $u$ and $v$ because $d^-(u,v)$ coincides with $d^+(v,u)$ so $\ell(\alpha)$ only depends on $\alpha$ itself and not on the labeling of its vertices. This defines the length of a non-separating arc $\alpha$ except if $\alpha$ is a loop arc and we will naturally use the convention that a non-separating loop arc has length $0$.

If $u$ and $v$ are the two extremities of a separating arc $\alpha$, the length $\ell(\alpha)$ of that arc is defined in a different way. In that case, $\mathrm{M}_n\mathord{\setminus}\alpha$ has two connected components, and one of these connected components is a disk bounded by $\alpha$ and either $\beta_{u,v}$ or $\beta_{v,u}$. If that disk is bounded by $\beta_{u,v}$, we set
$$
\ell(\alpha)=d^-(u,v)
$$
and if it is bounded by $\beta_{v,u}$, we set
$$
\ell(\alpha)=d^+(u,v)\mbox{.}
$$

Finally, if $\alpha$ is a boundary arc of $\mathrm{M}_n$, we set $\ell(\alpha)$ to $1$.

\begin{defn}
Consider a triangulation $T$ of $\mathrm{M}_n$ and a triangle $t$ of $T$. We say that $t$ is a \emph{central triangle} of $T$ when it is bounded by at least one non-separating arc and the lengths of the arcs bounding $t$ sum to $n$.
\end{defn}

Central triangles will appear in our construction of short paths between pairs of triangulation of $\mathrm{M}_n$. In a first step, we shall prove the following.

\begin{thm}\label{NOFG.sec.4.thm.1}
A triangulation of $\mathrm{M}_n$ has at least one central triangle.
\end{thm}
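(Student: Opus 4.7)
My plan is to analyse the dual graph $G$ of $T$, whose vertices are the $n$ triangles and whose edges are the $n$ interior arcs of $T$ (an arc bounding one triangle on both sides contributes a self-loop). The Euler relation $\chi(\mathrm{M}_n) = 0$ forces exactly $n$ interior arcs, so $G$ has cyclomatic number $1$ and admits a unique independent cycle $C$. Since an interior arc is a bridge of $G$ exactly when it is separating in $\mathrm{M}_n$, the edges of $C$ are precisely the non-separating arcs of $T$. Writing $C = (v_1, \ldots, v_k, v_1)$ with cycle-arcs $\alpha_1, \ldots, \alpha_k$, each $v_j$ has $\alpha_{j-1}$ and $\alpha_j$ as two of its (possibly equal) edges; triangles off $C$ carry no non-separating edge, so only the $v_j$ can be central.

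The easy case $k = 1$ can be handled directly. Here $\alpha_1$ is a non-separating loop at some $u \in \mathcal{P}$ bounding $v_1$ twice, and by Proposition 4.1(ii) the second edge $\beta_1$ of $v_1$ is a separating loop at $u$. The M\"obius side of $\beta_1$ contains the cross-cap and $\alpha_1$ but meets $\partial \mathrm{M}$ only at $u$, so it contains no boundary arc of $\mathrm{M}_n$. All $n$ boundary arcs therefore lie on the disk side of $\beta_1$, giving $\ell(\beta_1) = n$ and $\ell(v_1) = 0 + 0 + n = n$, so $v_1$ is central.

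For $k \ge 3$, cutting $\mathrm{M}_n$ along $\alpha_1, \ldots, \alpha_k$ splits it into $k$ disk regions $R_1, \ldots, R_k$, with $R_j$ containing $v_j$ together with the subtree of $G \setminus C$ containing $v_j$. Each $R_j$ is a polygon whose boundary consists of a copy of $\alpha_{j-1}$, a copy of $\alpha_j$ meeting at a common endpoint $w_j$ (the corner of $v_j$ opposite its third edge $\beta_j$), and $s_j \ge 1$ boundary arcs of $\mathrm{M}_n$, with $\sum_{j=1}^{k} s_j = n$. A direct disk-triangulation count shows $\beta_j$ is a separating arc whose disk side in $\mathrm{M}_n$ contains exactly these $s_j$ boundary arcs (or a boundary arc when $s_j = 1$), so $\ell(\beta_j) = s_j$ and $\ell(v_j) = \ell(\alpha_{j-1}) + \ell(\alpha_j) + s_j$; the problem therefore reduces to producing some $j$ with $\ell(\alpha_{j-1}) + \ell(\alpha_j) = n - s_j$.

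The main obstacle is this last reduction. I would address it by analysing the cyclic positions of the anchors $w_1, \ldots, w_k$ on $\partial \mathrm{M}$: in the model case where their cyclic order on $\partial \mathrm{M}$ matches their order along $C$, writing $c_j$ for the number of boundary arcs between $w_j$ and $w_{j+1}$ in a fixed direction yields $\ell(\alpha_j) = \min(c_j, n - c_j)$ and $s_j = n - c_{j-1} - c_j$, so $v_j$ is central exactly when $c_{j-1}, c_j \le n/2$; since $\sum_{j=1}^{k} c_j = n$ at most one $c_j$ can exceed $n/2$, leaving a valid index. The subtler points are configurations in which the two cyclic orders disagree, the edge case $k = 2$, and non-separating loops that cause several $w_j$ to coincide. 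I expect these to be resolved by a more careful combinatorial analysis, or by lifting to the orientation double cover of $\mathrm{M}_n$, where the annulus triangulation satisfies the rigid identity that each triangle's edge-lengths sum to $2n$ and the central triangle downstairs arises as the projection of a symmetric pair upstairs.
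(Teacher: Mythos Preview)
Your dual-graph framing is sound: the unique cycle $C$ in the dual graph of $T$ does pick out exactly the triangles with non-separating edges, and your $k=1$ argument is correct. One of your stated worries is moot: $k=2$ cannot occur, because the two cycle-arcs $\alpha_1,\alpha_2$ would then share both endpoints $w_1,w_2$, and there is only one non-separating arc between two given boundary points up to isotopy.

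The genuine gap is that your ``model case'' is nearly empty, and the residual case is where the whole difficulty sits. Since $w_j$ lies on the M\"obius side of $\beta_j$, one always has $s_j=\bigl|\,n-c_{j-1}-c_j\,\bigr|$; your formula $s_j=n-c_{j-1}-c_j$ is only the sub-case $c_{j-1}+c_j<n$, and summing it against the identity $\sum_j s_j=n$ forces $k=3$. So for $k\ge4$ the anchors $w_1,\ldots,w_k$ \emph{never} wind once around $\partial\mathrm{M}$ in cycle order. A direct computation with the correct formula shows that $v_j$ is central precisely when $c_{j-1}$ and $c_j$ lie on the \emph{same} side of $n/2$ (you have only recorded the case $c_{j-1},c_j\le n/2$, missing the symmetric one). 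Thus the only obstruction is that the sequence $(c_j)$ strictly alternates across $n/2$, which forces $k$ even; this alternating configuration is exactly what you defer with ``I expect these to be resolved,'' and it is the substantive part of the theorem, not a detail. Your double-cover remark does not obviously help: lengths on the annulus do not satisfy the rigid ``sum to $2n$'' identity you suggest.

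For comparison, the paper attacks the same dichotomy from the vertex side rather than the triangle side. Lemma~\ref{NOFG.sec.4.lem.2} is your non-alternating case: if some vertex $u$ has non-separating neighbours on both sides of $n/2$ (equivalently some $c_{j-1},c_j$ lie on the same side), the central triangle is found directly. Lemma~\ref{NOFG.sec.4.lem.3} then disposes of the alternating case by partitioning the anchor set into $\mathcal{Q}^-$ (those $u$ with all $d^-(u,v)<n/2$) and $\mathcal{Q}^+$, locating a boundary arc of the ``stripped'' surface $\mathrm{M}_q$ with one end in each class, and showing the incident triangle must in fact contain a loop whose adjacent triangle is central. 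That partition-and-boundary-transition argument is the missing ingredient in your outline.
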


The proof of Theorem \ref{NOFG.sec.4.thm.1} is split into three lemmas. Recall that any triangulation of $\mathrm{M}_n$ contains at least one non-separating arc.

\begin{lem}\label{NOFG.sec.4.lem.1}
Consider a triangulation $T$ of $\mathrm{M}_n$. If a vertex $u$ of $T$ is incident to a non-separating loop $\alpha$ and no other non-separating arc of $T$, then
\begin{itemize}
\item[(i)] $\alpha$ is the only non-separating arc contained in $T$ and
\item[(ii)]  $T$ has exactly one central triangle.
\end{itemize}
\end{lem}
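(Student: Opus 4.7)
The plan is to pin down the local structure at $u$, extract a separating loop, and use it to dissect $\mathrm{M}_n$.

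First, I would examine the triangle $t_\alpha$ of $T$ incident to $\alpha$. The two edges of $t_\alpha$ other than $\alpha$ must share an endpoint with $\alpha$, hence are both incident to $u$, so by the hypothesis neither is non-separating. If $t_\alpha$ were bounded by three distinct arcs, then Proposition~\ref{NOFG.sec.4.prop.1}(iii) would force two of them to be non-separating, a contradiction. Hence Proposition~\ref{NOFG.sec.4.prop.1}(ii) applies: $t_\alpha$ is bounded by exactly two arcs, namely $\alpha$ (non-separating, appearing twice as an edge of $t_\alpha$, since $\alpha$ bounds a M\"obius strip and is therefore non-flippable) and a separating loop $\beta$ (appearing once), both based at $u$.

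Next, I would describe the decomposition induced by $\beta$. Since $\beta$ is separating, $\mathrm{M}_n \setminus \beta$ has two connected components: the side $A$ containing $\alpha$ and the side $B$ containing $\partial \mathrm{M} \setminus \{u\}$. The triangle $t_\alpha$ alone fills $A$: gluing the two $\alpha$-edges of the abstract triangle compatibly with the one-sided nature of $\alpha$ yields a M\"obius band with boundary $\beta$. An Euler-characteristic count, carried out after splitting the pinch at $u$ into two copies $u_L, u_R$ separated by $\beta$, shows that $B$ is a topological disk with $n+1$ marked points on its boundary ($n-1$ inherited from $\partial\mathrm{M} \setminus \{u\}$ together with $u_L$ and $u_R$), carrying the induced triangulation with $n-2$ interior arcs.

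For assertion (i), any interior arc $\gamma$ of $T$ distinct from $\alpha$ and $\beta$ lies in the interior of $B$. If $\gamma$ is incident to $u$, then $\gamma$ is separating in $\mathrm{M}_n$ directly by the hypothesis. Otherwise its endpoints lie on $\partial\mathrm{M} \setminus \{u\}$, and in the disk $B$ the chord $\gamma$ splits $B$ into two subdisks, exactly one of which contains the boundary arc $\beta$; the other subdisk avoids both $\beta$ and $u$, hence is disconnected from $A$ in $\mathrm{M}_n \setminus \gamma$, so $\gamma$ separates $\mathrm{M}_n$. Therefore $\alpha$ is the unique non-separating arc of $T$.

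For assertion (ii), any central triangle must be bounded by a non-separating arc, which by (i) must be $\alpha$. Since $\alpha$ appears as two of the three edges of $t_\alpha$, no other triangle of $T$ is incident to $\alpha$, so $t_\alpha$ is the only candidate. Its three bounding edge-copies have lengths $\ell(\alpha)=0$, $\ell(\alpha)=0$ (by the convention for non-separating loops) and $\ell(\beta)=n$ (since the disk component of $\mathrm{M}_n \setminus \beta$ contains all $n$ boundary arcs of $\mathrm{M}$), summing to $n$, so $t_\alpha$ is indeed central. The main obstacle, in my view, is the careful bookkeeping near the pinch point $u$ when decomposing $\mathrm{M}_n$ along $\beta$; I plan to handle it by working in the desingularized model of $B$ before translating conclusions back to $\mathrm{M}_n$.
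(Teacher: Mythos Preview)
Your proof is correct and follows essentially the same approach as the paper's: identify the separating loop $\beta$ bounding the unique triangle incident to $\alpha$, observe that everything outside $\beta$ is a disk (so no further non-separating arcs exist), and verify that this triangle is the unique central one since $\ell(\alpha)=0$ and $\ell(\beta)=n$. Your version is more explicit in invoking Proposition~\ref{NOFG.sec.4.prop.1} to justify why $t_\alpha$ must be bounded by exactly two loop arcs, whereas the paper simply asserts this structure with a reference to Figure~\ref{NOFG.sec.1.fig.1}; one very minor point is that the definition of a central triangle sums the lengths of the \emph{arcs} bounding $t$ rather than the three edge-copies, but since $\ell(\alpha)=0$ this does not affect your computation.
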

\begin{proof}
If $u$ is not incident to a non-separating arc of $T$ other than $\alpha$, then $\alpha$ serves as two edges of a triangle $t$ which is further bounded by a separating loop $\beta$ (see the left of Figure~\ref{NOFG.sec.1.fig.1}). As $\beta$ surrounds the topology of $\mathrm{M}_n$, there cannot be a non-separating arc in $T$ other than $\alpha$. By definition, the length of $\alpha$ is $0$ and the length of $\beta$ is $n$. Hence, $t$ is a central triangle of $T$ and since $\alpha$ is the only non-separating arc in $T$, this is the only central triangle of $T$.
\end{proof}

Now consider a triangulation $T$ of $\mathrm{M}_n$. For every vertex $u$ of $T$ incident to a non-separating arc in $T$ that is not a loop let $\mathcal{V}_T(u)$ denote the set of the vertices $v$ of $T$ different from $u$ such that there is a non-separating arc in $T$ with vertices $u$ and $v$. We can prove the following statement.

\begin{lem}\label{NOFG.sec.4.lem.2}
Consider a triangulation $T$ of $\mathrm{M}_n$ and a vertex $u$ of $T$ incident to at least one non-separating arc in $T$ that is not a loop. If
$$
\min\bigl\{d^-(u,v):v\in\mathcal{V}_T(u)\bigr\}\leq\frac{n}{2}\leq\max\bigl\{d^-(u,v):v\in\mathcal{V}_T(u)\bigr\}\mbox{,}
$$
then $T$ has at least one central triangle.
\end{lem}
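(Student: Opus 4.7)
The plan is to locate a central triangle of $T$ incident to $u$. I enumerate the arcs of $T$ incident to $u$ in the cyclic order around $u$, with the two boundary arcs at $u$ at the extremes. The hypothesis provides two non-separating, non-loop arcs $\alpha$ and $\alpha'$ incident to $u$ whose other endpoints $v, v' \in \mathcal{V}_T(u)$ satisfy $d^-(u, v) \leq n/2 \leq d^-(u, v')$; I may further assume that $\alpha$ and $\alpha'$ are consecutive among the non-separating arcs at $u$ in the cyclic order around $u$, after choosing the first ``crossing'' of $n/2$ in the cyclic sequence of $d^-$-values.

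In the main case, when $\alpha$ and $\alpha'$ are also consecutive in the full cyclic order at $u$ (no intervening separating arcs), they bound a triangle $\tau$ of $T$ together with a third edge $\sigma$ from $v$ to $v'$. Since $d^-(u, v) \leq n/2$, we have $\ell(\alpha) = d^-(u, v)$, and since $d^-(u, v') \geq n/2$, we have $\ell(\alpha') = n - d^-(u, v')$. The key step is to verify, by analyzing the embedding of $\tau$ in $\mathrm{M}_n$ within the cross-cap model, that $\ell(\sigma) = d^-(u, v') - d^-(u, v)$: intuitively, the disk region of $\mathrm{M}_n$ on the side of $\sigma$ opposite to $\tau$ (or the appropriate boundary side if $\sigma$ is non-separating) should contain exactly the boundary arcs lying clockwise between $v'$ and $v$. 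Summing the three lengths then gives $n$, so $\tau$ is a central triangle bounded by the non-separating arcs $\alpha$ and $\alpha'$.

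For the general case, when one or more separating arcs of $T$ lie strictly between $\alpha$ and $\alpha'$ in the cyclic order at $u$, I would reduce to the main case by induction on the number of intervening separating arcs. Each such separating arc at $u$ bounds a disk region of $\mathrm{M}_n$ disjoint from the cross-cap, so a contraction or a flip eliminating such an arc yields a triangulation of a smaller or simpler surface on which the hypothesis is preserved; a central triangle in the reduced situation then corresponds to one in $T$.

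The main obstacle is the length calculation for $\sigma$ in the base case, which requires care both to handle the separating versus non-separating sub-cases and to ensure that the specific cyclic positions of $\alpha, \alpha'$ at $u$ are compatible with the expected disk region on the outer side of $\sigma$. The cross-cap structure makes this delicate: two non-separating arcs at $u$ can be arranged in the cyclic order at $u$ in ways not directly predicted by their endpoints' positions on the boundary, so the topological verification that the outer disk of $\sigma$ contains precisely $d^-(u, v') - d^-(u, v)$ boundary arcs is the crucial step in the argument.
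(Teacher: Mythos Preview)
Your outline is aimed at the right triangle, but it misses two structural facts about $\mathrm{M}_n$ that make the paper's argument short and your proposed detours unnecessary.

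First, your ``general case'' is empty. A separating arc at $u$ cuts off a disk on one side of the cyclic order around $u$; every non-separating arc at $u$ must pass through the cross-cap and therefore lies on the other side. Hence no separating arc can sit between two non-separating arcs in the cyclic order at $u$, and once $\alpha,\alpha'$ are consecutive among the non-separating arcs they already bound a triangle of $T$. Your proposed reduction by contractions or flips is not needed, and it is not clear how it would even be set up so as to preserve the hypothesis and reflect a central triangle back to $T$.

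Second, the length computation you flag as the main obstacle is immediate once you invoke Proposition~\ref{NOFG.sec.4.prop.1}(iii): since two edges of the triangle are non-separating, the third edge $\sigma$ is separating. The disk that $\sigma$ bounds is on the side away from $u$ (the triangle lives on the cross-cap side), so it is bordered by $\sigma$ together with the boundary portion from $v$ clockwise to $v'$; this gives $\ell(\sigma)=d^-(u,v')-d^-(u,v)$ directly, and the three lengths sum to $n$. There is no separating/non-separating sub-case for $\sigma$ to worry about. One small boundary case you do need to treat separately is when some $v\in\mathcal{V}_T(u)$ has $d^-(u,v)=n/2$: your scheme requires two distinct arcs $\alpha\neq\alpha'$, which can fail here (e.g.\ when $\mathcal{V}_T(u)=\{v\}$). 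The paper handles this with a one-line triangle-inequality argument showing that any triangle incident to that arc is already central.
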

\begin{proof}
First assume that a point $v$ in $\mathcal{V}_T(u)$ satisfies
$$
d^-(u,v)=\frac{n}{2}\mbox{.}
$$

In that case any triangle $t$ of $T$ incident to the non-separating arc $\alpha$ between $u$ and $v$ is a central triangle. Indeed, by the triangle inequality, 
$$
\ell(\beta)+\ell(\gamma)\geq\ell(\alpha)
$$
where $\beta$ and $\gamma$ are the two arcs that bound $t$ other from $\alpha$. This implies
$$
\ell(\alpha)+\ell(\beta)+\ell(\gamma)\geq{n}
$$
and since the lengths of the arcs bounding a triangle sum to at most $n$, this shows that $t$ is a central triangle. Now assume that
$$
\min\bigl\{d^-(u,v):v\in\mathcal{V}_T(u)\bigr\}<\frac{n}{2}<\max\bigl\{d^-(u,v):v\in\mathcal{V}_T(u)\bigr\}
$$
and consider the point $a$ in $\mathcal{V}(u)$ such that $d^-(u,a)$ is the largest possible under the constraint that $d^-(u,a)$ is at most $n/2$. Likewise, let $b$ be the point in $\mathcal{V}(u)$ such that $d^-(u,b)$ is greater than $n/2$ and $d^-(u,b)$ is the smallest possible under that requirement. By construction, $u$, $a$, and $b$ are the three distinct vertices of a triangle $t$ of $T$. The two arcs $\alpha$ and $\beta$ incident to $u$ bounding this triangle are non-separating. The third arc bounding this triangle---let us denote it by $\gamma$---is separating according to Proposition \ref{NOFG.sec.4.prop.1} and by construction,
\begin{equation}\label{NOFG.sec.4.lem.1.eq.2}
\ell(\gamma)=d^-(u,b)-d^-(u,a)\mbox{.}
\end{equation}

However, as $d^-(u,a)$ is at most $n/2$ and $d^-(u,b)$ it greater than $n/2$,
$$
d^-(u,b)-d^-(u,a)=n-\ell(\alpha)-\ell(\beta)
$$
and combining this with (\ref{NOFG.sec.4.lem.1.eq.2}) shows that $t$ is a central triangle of $T$.
\end{proof}

Consider a triangulation $T$ of $\mathrm{M}_n$ that contains at least one non-separating non-loop arc. According to the following lemma, the condition in the statement of Lemma \ref{NOFG.sec.4.lem.2} is satisfied by at least one vertex $u$ of $T$ .

\begin{lem}\label{NOFG.sec.4.lem.3}
Consider a triangulation $T$ of $\mathrm{M}_n$ containing at least one non-separating arc that is not a loop. If every vertex $u$ of $T$ incident to at least one non-separating non-loop arc of $T$ satisfies either
\begin{equation}\label{NOFG.sec.4.lem.3.eq.2}
\max\bigl\{d^-(u,v):v\in\mathcal{V}_T(u)\bigr\}<\frac{n}{2}
\end{equation}
or
\begin{equation}\label{NOFG.sec.4.lem.3.eq.3}
\min\bigl\{d^-(u,v):v\in\mathcal{V}_T(u)\bigr\}>\frac{n}{2}\mbox{,}
\end{equation}
then $T$ has at least one central triangle.
\end{lem}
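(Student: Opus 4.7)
The plan is to argue, under the hypothesis, that (i) no triangle of $T$ bounded by three distinct non-loop arcs can be central, so any central triangle must have a loop edge, and then (ii) exhibit a central triangle of a specific form involving a non-separating loop and the longest non-separating non-loop arc at a suitably chosen vertex.

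By reversing the clockwise orientation on the privileged boundary of $\mathrm{M}$ if necessary, I may assume that some vertex $u$ satisfies condition (\ref{NOFG.sec.4.lem.3.eq.2}); let $L = \max\{d^-(u,v) : v \in \mathcal{V}_T(u)\}$, which is strictly less than $n/2$ by hypothesis, and let $\alpha$ be a non-separating non-loop arc from $u$ to $v^*$ achieving this maximum. For step (i), I take any triangle $t$ of $T$ whose three arcs are pairwise distinct and all non-loop, and whose two non-separating arcs share a common apex $w$. By Proposition \ref{NOFG.sec.4.prop.1}(iii), the third arc of $t$ is separating. Using the cyclic order of the three corners of $t$ on the boundary and identifying the disk side of this separating arc as the boundary portion opposite to $w$ (since $w$ is a corner of $t$, which lies on the non-disk side), the sum of the three arc lengths equals $2\max\{d^-(w,\cdot)\}$ if $w$ satisfies (\ref{NOFG.sec.4.lem.3.eq.2}), and $2(n-\min\{d^-(w,\cdot)\})$ if $w$ satisfies (\ref{NOFG.sec.4.lem.3.eq.3}). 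Both are strictly less than $n$ under the hypothesis, so $t$ is not central.

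For step (ii), I examine the fan of arcs at $u$. Since all non-separating arcs pass through the cross-cap of $\mathrm{M}$, they occupy a contiguous angular sector of this fan, and $\alpha$ sits at the clockwise-extreme position among the non-separating non-loop arcs of this sector. I would then show that the arc immediately adjacent to $\alpha$ on its clockwise-extreme side must be a non-separating loop $\mu$ at $u$. Granting this, the triangle $t$ of $T$ having both $\alpha$ and $\mu$ as edges and whose third edge $\beta$ is an interior arc has $\beta$ separating (again by Proposition \ref{NOFG.sec.4.prop.1}(iii)) and connecting $v^*$ back to $u$. The triangle $t$ lies on the side of $\beta$ containing the cross-cap, which is bounded by the short boundary segment from $u$ clockwise to $v^*$ of length $L$, so the disk side of $\beta$ is the complementary long segment of length $n-L$. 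Therefore the three arc lengths of $t$ sum to $L + 0 + (n-L) = n$, making $t$ central.

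The main obstacle is step (ii), justifying that the fan-neighbor of $\alpha$ on its clockwise-extreme side must be a non-separating loop at $u$. The alternatives to rule out are a separating arc, a boundary arc, and another non-separating non-loop arc at $u$; the last is excluded by the maximality of $L$, while the first two I would handle via a case analysis invoking the corresponding coloring condition (\ref{NOFG.sec.4.lem.3.eq.3}) at the opposite endpoint $v^*$ of $\alpha$, together with the non-centrality of three-distinct-non-loop-arc triangles already established in step (i) applied to the triangle that would arise in each alternative configuration.
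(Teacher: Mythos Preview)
Your step (i) is carried out correctly, but it is never put to real use: showing that certain triangles are \emph{not} central does not help you \emph{find} one. In step~(ii) you invoke step~(i) ``applied to the triangle that would arise in each alternative configuration,'' yet non-centrality of that triangle yields no contradiction and rules nothing out.

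The actual gap is the central claim of step~(ii): that for your chosen $u$, the fan-neighbour of the extremal arc $\alpha$ is a non-separating loop at $u$. Concretely, suppose that neighbour is a separating (or boundary) arc $\sigma$ from $u$ to some $w$. The triangle on that side of $\alpha$ then has edges $\alpha$, $\sigma$, and a non-separating arc $\tau$ from $v^*$ to $w$ (Proposition~\ref{NOFG.sec.4.prop.1}(iii)). Using the min condition (\ref{NOFG.sec.4.lem.3.eq.3}) at $v^*$ only tells you $d^-(v^*,w)>n/2$, whence $w$ satisfies the max condition---no contradiction arises. Moreover, if $w=v^*$ then $\tau$ is a non-separating loop at $v^*$ rather than at $u$, so the loop you are after need not sit at the vertex you selected. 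Your choice of $u$ is simply too loose: any vertex satisfying (\ref{NOFG.sec.4.lem.3.eq.2}) will not do.

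The paper closes this gap by restricting to the triangulation $U$ of $\mathrm{M}_q$ formed by the triangles of $T$ that see a non-separating arc, and then choosing a \emph{boundary arc of $\mathrm{M}_q$} with one endpoint $u$ satisfying (\ref{NOFG.sec.4.lem.3.eq.2}) and the other endpoint $v$ satisfying (\ref{NOFG.sec.4.lem.3.eq.3}). Because $u$ and $v$ are consecutive in $\mathcal{Q}$, the triangle of $U$ incident to this boundary arc cannot have a third vertex $w$ (such a $w$ would satisfy neither condition), and is therefore forced to carry a non-separating loop---possibly at $u$, possibly at $v$. The adjacent triangle $t'$ across that loop is the central one. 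Your argument could be repaired along these lines, but as written the vertex $u$ is not pinned down enough for the loop to appear where you claim it does.
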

\begin{proof}
Let $\mathcal{Q}$ be the set of the points in $\mathcal{P}$ incident to at least one non-separating arc in $T$. By Lemma \ref{NOFG.sec.4.lem.1}, every point in $\mathcal{Q}$ is incident to at least one non-separating arc in $T$ that is not a loop. Assume that every point $u$ contained in $\mathcal{Q}$ satisfies either (\ref{NOFG.sec.4.lem.3.eq.2}) or (\ref{NOFG.sec.4.lem.3.eq.3}). By this assumption,
\begin{equation}\label{NOFG.sec.4.lem.3.eq.4}
\mathcal{Q}=\mathcal{Q}^-\cup\mathcal{Q}^+
\end{equation}
where $\mathcal{Q}^-$ is the set of the points in $\mathcal{Q}$ that satisfy (\ref{NOFG.sec.4.lem.3.eq.2}) and $\mathcal{Q}^+$ the set of the points in $\mathcal{Q}$ that satisfy (\ref{NOFG.sec.4.lem.3.eq.3}). In particular, $\mathcal{Q}^-$ and $\mathcal{Q}^+$ cannot both be empty. It turns out that they are both non-empty. Indeed, by symmetry we can assume without loss of generality that $\mathcal{Q}^-$ is non-empty and pick a point $u$ in $\mathcal{Q}^-$. Note that $\mathcal{V}_T(u)$ is a subset of $\mathcal{Q}$. In fact, $\mathcal{V}_T(u)$ is a subset of $\mathcal{Q}^+$ because
$$
d^-(u,w)=n-d^-(w,u)
$$
for every point $w$ in $\mathcal{V}_T(u)$, which proves that $\mathcal{Q}^+$ is non-empty as well. In particular, $\mathcal{Q}^-$ and $\mathcal{Q}^+$ form a partition of $\mathcal{Q}$.
 
Denote $q$ the number of points in $\mathcal{Q}$ and observe that the set of all arcs in $T$ whose two endpoints belong to $\mathcal{Q}$ form a triangulation $U$ of $\mathrm{M}_q$. 
\begin{figure}[b]
\begin{centering}
\includegraphics[scale=1]{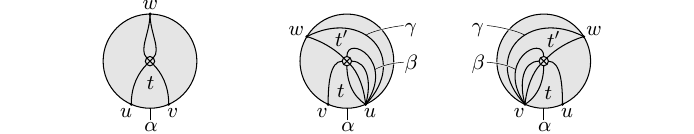}
\caption{The triangles $t$ and $t'$ from the proof of Lemma \ref{NOFG.sec.4.lem.3} represented within the surface $\mathrm{M}_q$.}\label{NOFG.sec.4.fig.1}
\end{centering}
\end{figure}
Equivalently, $U$ is obtained from $T$ by cutting away the triangles that are not incident to any non-separating arc. Since $\mathcal{Q}^-$ and $\mathcal{Q}^+$ are both non-empty, one can find a boundary arc $\alpha$ of $\mathrm{M}_q$ with one end in $\mathcal{Q}^-$ and the other in $\mathcal{Q}^+$. Let $u$ and $v$ denote the vertices of $\alpha$ labeled in such a way that $u$ belongs to $\mathcal{Q}^-$ and $v$ to $\mathcal{Q}^+$. Further denote by $t$ the triangle of $U$ incident to $\alpha$.
 
Let us show that $t$ cannot have three distinct vertices. Indeed, otherwise, $t$ must be as shown on the left of Figure~\ref{NOFG.sec.4.fig.1} where $w$ is the vertex of $t$ that does not belong to $\alpha$. In this situation $\alpha$ must lie counter-clockwise from $u$ and clockwise from $v$ because $u$ belongs to $\mathcal{Q}^-$ and $v$ to $\mathcal{Q}^+$. In particular,
$$
d^-(u,w)<\frac{n}{2}<d^-(v,w)\mbox{.}
$$
 
This implies that $d^-(w,u)$ is greater than $n/2$ and $d^-(w,v)$ is less than $n/2$. Hence, $w$ belongs to $\mathcal{Q}\mathord{\setminus}(\mathcal{Q}^-\cup\mathcal{Q}^+)$, which contradicts (\ref{NOFG.sec.4.lem.3.eq.4}).
 
We have just proved that $u$ and $v$ are the only two vertices of $t$. As a consequence, one of the arcs of $T$ that bounds $t$ is a non-separating arc between $u$ and $v$ and another is a loop $\beta$ twice incident to $u$ or to $v$ as shown at the center or on the right of Figure~\ref{NOFG.sec.4.fig.1} depending on what point $\beta$ is twice incident to. Note that in both cases, $\alpha$ must lie clockwise from $u$ and counter-clockwise from $v$ because $u$ belongs to $\mathcal{Q}^-$, $v$ to $\mathcal{Q}^+$, and these points are the two endpoints of a non-separating arc. Now consider the triangle $t'$ incident to $\beta$ and different from $t$ which is also shown in Figure~\ref{NOFG.sec.4.fig.1}. Denote by $w$ the vertex of $t'$ that does not belong to $\alpha$ and by $\gamma$ the separating arc that bounds $t'$.

If $\beta$ is twice incident to $u$, then $d^-(u,w)$ is less than $n/2$ because $u$ belongs to $\mathcal{Q}^-$ and there is a non-separating non-loop arc of $T$ between $u$ and $w$. As a consequence, $\ell(\gamma)$ is greater than $n/2$. As $t'$ is bounded by $\gamma$ and two non-separating arcs of $T$, it follows that the lengths of these three arcs sum to $n$ and therefore $t'$ is a central triangle of $T$. A similar argument shows that $t'$ is also a central triangle when $\beta$ is twice incident to $v$.
\end{proof}

Theorem \ref{NOFG.sec.4.thm.1} follows immediately from Lemmas~\ref{NOFG.sec.4.lem.1}, \ref{NOFG.sec.4.lem.2}, and~\ref{NOFG.sec.4.lem.3}. We now build explicit paths in $\mathcal{F}(\mathrm{M}_n)$ between any two triangulations, whose length will allow to upper bound the diameter of that graph. These paths will be through the family of triangilations $C_{u}(v,w)$ represented on the left of Figure~\ref{NOFG.sec.4.fig.2}. In these triangulations, $v$ and $w$ are two (non-necessarily distinct) points in $\mathcal{P}\mathord{\setminus}\{u\}$ such that $v$ belongs to $\beta_{u,w}$. They contain a non-separating loop twice incident to $u$ and the two triangles incident to that loop admit $v$ and $w$ as their only other vertex. 
\begin{figure}[b]
\begin{centering}
\includegraphics[scale=1]{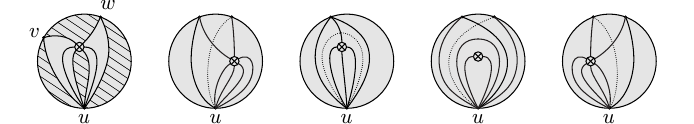}
\caption{The triangulation $C_u(v,w)$ and four flips in this triangulation, where the introduced arcs are dotted.}\label{NOFG.sec.4.fig.2}
\end{centering}
\end{figure}
All the other arcs in $C_u(v,w)$ are incident to 
$u$ and to a point different from $u$ (so in particular, $C_u(v,w)$ does not contain a separating loop). Assuming that $C_u(v,w)$ and $C_u(v',w')$ are two such triangulations, it is not hard to see that their distance in $\mathcal{MF}(\mathrm{M}_n)$ satisfies
\begin{equation}\label{NOFG.sec.4.eq.1}
d\bigl(C_u(v,w),C_u(v',w')\bigr)\leq\max\bigl\{d^-(u,w),d^-(u,w')\bigr\}+1\mbox{.}
\end{equation}

Indeed, assume without loss of generality that $d^-(u,v)$ is at most $d^-(u,v')$ and consider the four flips shown in Figure~\ref{NOFG.sec.4.fig.2}. One can move $v$ clockwise using the first flip on the left until either it coincides with $v'$ or with $w'$. In the latter case, one can perform the second flip that introduces a separating loop and move the triangle incident to that loop clockwise using the third flip until the vertex of that triangle distinct from $u$ is equal to $v'$. After removing the separating loop by a flip, $v$ and $v'$ coincide. Finally one can use the last flip on the right of Figure~\ref{NOFG.sec.4.fig.2} to move $w$ or $w'$ until they coincide. In this process, the number of flips that have been performed is at most
$$
\max\bigl\{d^-(v,w),d^-(v,w')\bigr\}
$$
to move vertices clockwise plus possibly two flips that introduce or remove a separating loop. As $v$ is not equal to $u$, then
$$
\max\bigl\{d^-(v,w),d^-(v,w')\bigr\}=\max\bigl\{d^-(u,w),d^-(u,w')\bigr\}-1
$$
and this proves (\ref{NOFG.sec.4.eq.1}). In order to build the rest of our paths, we exhibit non-separating arcs with certain properties in the triangulations of $\mathrm{M}_n$.

\begin{lem}\label{NOFG.sec.4.lem.6}
Consider a triangulation $T$ of $\mathrm{M}_n$ and a point $u$ in $\mathcal{P}$. If $T$ does not contain a non-separating arc incident to $u$, then it contains a non-separating arc incident to a point $v$ different from $u$ and whose other endpoint belongs to $\beta_{u,v}$ when $d^-(u,v)$ is at most $n/2$ or to $\beta_{v,u}$ otherwise.
\end{lem}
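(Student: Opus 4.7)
The plan is to argue via the cyclic structure of non-separating arcs of $T$ and to derive a contradiction from a straddling assumption. Since $\mathrm{M}$ is non-orientable, $T$ must contain at least one non-separating arc. If some non-separating arc of $T$ is a loop at some vertex $v$, then $v \neq u$ by the hypothesis on $u$, and both its endpoints coincide with $v\in\beta_{u,v}$; this arc then meets the requirement. We may therefore assume that every non-separating arc of $T$ has two distinct endpoints, neither equal to $u$.

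By Proposition~\ref{NOFG.sec.4.prop.1}(iii), every triangle of $T$ containing at least one non-separating arc actually contains exactly two such arcs, sharing a vertex opposite its unique separating edge. Since every arc of $T$ belongs to exactly two triangles, the non-separating arcs of $T$ form a $2$-regular multigraph under the share-a-triangle relation, and hence decompose into disjoint cycles. Fix such a cycle $\alpha_0,\alpha_1,\ldots,\alpha_{k-1}$, and for each $i$ let $p_i\in\mathcal{P}\setminus\{u\}$ denote the vertex shared by $\alpha_i$ and $\alpha_{i+1\bmod k}$; then $\alpha_i$ has endpoints $p_{i-1}$ and $p_i$.

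Suppose, for contradiction, that every non-separating arc of $T$ has its two endpoints on opposite halves of $\partial\mathrm{M}$ relative to $u$ (one with $d^-(u,\cdot)\leq n/2$, the other with $d^-(u,\cdot)>n/2$). Then consecutive $p_i$ alternate between the two halves, so $k$ must be even. The $k$ triangles $t_i$ containing the consecutive pairs $\alpha_i,\alpha_{i+1}$ sweep out a ribbon $R\subseteq\mathrm{M}_n$ which, since $k$ is even (so that the closed curve $\alpha_0\alpha_1\cdots\alpha_{k-1}$ has even mod-$2$ intersection with any non-separating simple closed curve of $\mathrm{M}$), is orientable and therefore an annulus with two disjoint boundary circles. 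Each boundary circle consists of the separating arcs $\beta_i=p_{i-1}p_{i+1}$ opposite each $p_i$ in $t_i$, and all its vertices lie in a single fixed half of $\partial\mathrm{M}$ relative to $u$ (the even-indexed $p_i$'s on one circle, the odd-indexed ones on the other).

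The main obstacle is to turn this configuration into a contradiction. The approach is to use that $u\notin R$ and that the arcs of $T$ are pairwise non-crossing, in order to locate the cross-cap of $\mathrm{M}$: since $R$ is an annulus (orientable) while $\mathrm{M}$ is not, the cross-cap must lie in some connected component of $\mathrm{M}_n\setminus R$. The fact that the vertices on each boundary circle of $R$ are confined to a fixed half of $\partial\mathrm{M}$, combined with the position of $u$ outside $R$, forces one such component to be contained in a topological disk sub-region of $\mathrm{M}_n$ not meeting the cross-cap, contradicting that the cross-cap must lie in some component. We conclude that at least one $\alpha_i$ in the cycle has both endpoints in a single half of $\partial\mathrm{M}$ relative to $u$, providing the required non-separating arc.
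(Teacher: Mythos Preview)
Your overall strategy---arguing by contradiction through the cycle structure of non-separating arcs and a parity argument---is genuinely different from the paper's direct construction. The paper simply picks the longest separating interior arc $\beta$ of $T$ not incident to $u$, observes that by maximality the triangle on the M\"obius side of $\beta$ must be bounded by $\beta$ and two non-separating arcs sharing a vertex $v$, and then reads off that one of those two arcs has both endpoints in $\beta_{u,v}$ and the other in $\beta_{v,u}$. No contradiction, no ribbon, no parity.

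Your setup through the parity step is sound: the non-separating arcs do form disjoint cycles under the share-a-triangle relation (using Proposition~\ref{NOFG.sec.4.prop.1}), and the straddling hypothesis does force each such cycle to have even length. The claim that the resulting ribbon $R$ is orientable when $k$ is even is also correct.

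The genuine gap is your final paragraph. You assert that because the vertices of each boundary circle of $R$ lie in a single half of $\partial\mathrm{M}$, ``one such component'' of $\mathrm{M}_n\setminus R$ is contained in a disk region, ``contradicting that the cross-cap must lie in some component.'' This is not a contradiction: even if one component of $\mathrm{M}_n\setminus R$ is a disk, the cross-cap can perfectly well sit in another component. You would need to show that \emph{every} component is a disk, and the half-confinement of the boundary-circle vertices does not give you that (a concatenation of separating arcs whose vertices lie in one half can still wind around the cross-cap). The role you assign to $u$ here is also unclear and unused. A correct way to finish is purely topological and does not use $u$ or the halves at all: the core curve $c$ of the annulus $R$ is a two-sided simple closed curve in $\mathrm{M}$, hence either bounds a disk or is isotopic to $\partial\mathrm{M}$; in either case, since each rung $\alpha_i$ crosses $c$ exactly once while both endpoints of $\alpha_i$ lie on $\partial\mathrm{M}$ (which lies entirely on one side of $c$), you get an immediate contradiction. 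In other words, such cycles are \emph{always} odd, and your straddling hypothesis forces one to be even. You also glossed over a subtlety: the triangles $t_i$ might share a $\beta$-edge (if the $p_i$ repeat appropriately), so the embedded $R$ need not coincide with the abstract annulus obtained by gluing only along the $\alpha_i$'s; this needs a word of justification or a workaround.
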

\begin{proof}
Assume that $u$ is not incident to a non-separating arc in $T$. In that case there exists a separating arc $\beta$ in $T$ that is not incident to $u$ and such that, splitting $T$ along $\beta$ results in a triangulation of $\mathrm{M}_{n-\ell(\beta)+1}$ and a triangulation of a disk. Pick for $\beta$ the longest such separating arc. 
\begin{figure}[b]
\begin{centering}
\includegraphics[scale=1]{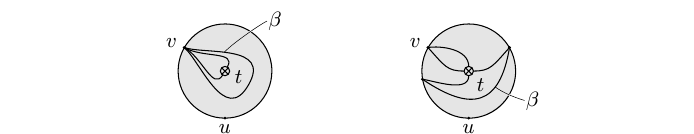}
\caption{The arc $\beta$ and the triangle $t$.}\label{NOFG.sec.4.fig.4}
\end{centering}
\end{figure}
It follows from this choice that the triangle $t$ of $T$ contained in $\mathrm{M}_{n-\ell(\beta)+1}$ and incident to $\beta$ is as shown in Figure~\ref{NOFG.sec.4.fig.4} depending on whether $\beta$ is a loop or not.

If $\beta$ is a loop then let $v$ be the point $\beta$ is twice incident to. Since $\beta$ is a separating loop, $T$ also contains a non-separating loop twice incident to $v$ and the result is immediate because $v$ belongs to both $\beta_{u,v}$ and $\beta_{v,u}$.

If $\beta$ is not a loop then $t$ is bounded by two distinct non-separating arcs $\gamma^-$ and $\gamma^+$ (one of which may be a loop) as shown on the right of Figure~\ref{NOFG.sec.4.fig.4}. Let $v$ be the unique vertex shared by these two non-separating arcs. Observe that one of these arcs, say $\gamma^-$ has its two endpoints in $\beta_{u,v}$ and the other, say $\gamma^+$ has its two endpoints in $\beta_{v,u}$. Picking $\gamma^-$ or $\gamma^+$ depending on whether $d^-(u,v)$ is at most $n/2$ or greater than $n/2$ proves the lemma.
\end{proof}

We are ready to upper bound the diameter of $\mathcal{F}(\mathrm{M}_n)$.

\begin{thm}\label{NOFG.sec.4.thm.2}
For every positive $n$,
$$
\mathrm{diam}(\mathcal{F}(\mathrm{M}_n))\leq\biggl\lfloor\frac{5}{2}n\biggr\rfloor\mbox{.}
$$
\end{thm}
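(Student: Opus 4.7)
My plan is to exhibit, for any pair of triangulations $T^-$ and $T^+$ of $\mathrm{M}_n$, a flip path of length at most $\lfloor 5n/2 \rfloor$. The path will factor through canonical triangulations of the form $C_u(v^\pm, w^\pm)$ that share a common apex vertex $u$, giving a decomposition of the total length as $L_1 + L_2 + L_3$, where $L_1 = d(T^-, C_u(v^-, w^-))$, $L_2 = d(C_u(v^-, w^-), C_u(v^+, w^+))$, and $L_3 = d(C_u(v^+, w^+), T^+)$. The goal is to show $L_1 + L_3 \leq 2n$ and $L_2 \leq \lfloor n/2 \rfloor$.

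Concretely, I would proceed in three stages. First, apply Theorem \ref{NOFG.sec.4.thm.1} to extract a central triangle from each $T^\pm$; since its three sides have lengths summing to $n$, this provides geometric control over $T^\pm$. I would then select a common apex $u$ and targets $v^\pm, w^\pm$ placed so that $d^-(u, w^\pm) \leq \lfloor n/2 \rfloor - 1$. Second, I would build explicit flip paths from each $T^\pm$ to the corresponding $C_u(v^\pm, w^\pm)$: if $u$ is already incident to a non-separating arc in $T^\pm$, that arc can be manipulated into a non-separating loop at $u$; otherwise Lemma \ref{NOFG.sec.4.lem.6} supplies a nearby non-separating arc to bootstrap the process, and a few extra flips suffice. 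Once the loop is in place, successive flips fan-triangulate the remaining (disk-like) portion of $\mathrm{M}_n$ from $u$. Third, inequality (\ref{NOFG.sec.4.eq.1}) bounds $L_2$ by $\max\{d^-(u, w^-), d^-(u, w^+)\} + 1 \leq \lfloor n/2 \rfloor$.

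The main obstacle will be bounding $L_1 + L_3 \leq 2n$ tightly. The central triangle always has a side of length at least $n/3$, and by choosing $u$ in relation to this long side one should be able to limit the number of arcs of $T^\pm$ that need to be rearranged to reach the canonical form. Non-orientability creates several cases requiring individual attention: whether the central triangle is bounded by two arcs (both loops, one non-separating, as in Proposition \ref{NOFG.sec.4.prop.1}) or by three distinct arcs (with two non-separating sides sharing a vertex), and whether the chosen apex $u$ is itself a vertex of the central triangle. Getting the additive constants exactly right, so that the total is $\lfloor 5n/2 \rfloor$ rather than $\lfloor 5n/2 \rfloor + O(1)$, will require aligning the three stages carefully and reusing as many arcs of $T^\pm$ as possible during the transformations to the canonical forms.
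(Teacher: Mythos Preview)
Your skeleton matches the paper's---factor through canonical triangulations $C_u(\cdot,\cdot)$ and bound the middle segment via (\ref{NOFG.sec.4.eq.1})---but the step where you ``select a common apex $u$'' is precisely where the argument lives, and your proposal does not supply the mechanism. Extracting a central triangle from \emph{each} of $T^\pm$ does not yield a common apex: each central triangle singles out its own distinguished vertex (the shared endpoint of its two non-separating sides, or the vertex of its two loops), and these generally differ between $T^-$ and $T^+$. The paper's argument is deliberately \emph{asymmetric}. The central triangle is invoked only for $T^-$; its distinguished vertex becomes $u$, and one of its non-separating sides $\gamma'$ already joins $u$ to a point $w'$ with $d^-(u,w')\leq n/2$. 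Cutting $T^-$ along $\gamma'$ and fan-triangulating the resulting $(n{+}2)$-gon at $u$ reaches $C_u(v',w')$ in at most $n-1$ flips, one flip being saved because $\gamma'$ is already incident to $u$ and need not be flipped at the end.

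For $T^+$ no central triangle is used at all. One locates any non-separating arc $\gamma$ in $T^+$ (via Lemma~\ref{NOFG.sec.4.lem.6} if $u$ happens not to be incident to one) and then, if needed, \emph{reverses the clockwise orientation on the boundary} so that the relevant endpoint $w$ of $\gamma$ satisfies $d^-(u,w)\leq n/2$; this reversal is harmless because the centrality of the triangle chosen in $T^-$ is orientation-independent. Cutting along $\gamma$, fan-triangulating, and possibly flipping $\gamma$ once costs at most $n$ flips and lands on $C_u(v,w)$. The total is $(n-1)+(\lfloor n/2\rfloor+1)+n=\lfloor 5n/2\rfloor$. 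Your remark about a side of length at least $n/3$ is a red herring; what matters is that a non-separating side of the central triangle has length at most $n/2$, and this is what pins down $w'$. Note also that the single flip saved on the $T^-$ side (from $\gamma'$ already meeting $u$) is exactly what brings the bound from $\lfloor 5n/2\rfloor+1$ down to $\lfloor 5n/2\rfloor$, so the asymmetry is not cosmetic but essential for the sharp constant you are aiming for.
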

\begin{proof}
According to Theorem~\ref{NOFG.sec.4.thm.1}, $T^-$ has at least one central triangle $t$. This triangle may be bounded by just two arcs, in which case these two arcs are a separating loop and a non-separating loop twice incident to the same vertex $u$ as shown on the left of Figure~\ref{NOFG.sec.1.fig.1}. If however, $t$ is bounded by three distinct arcs it follows from Proposition~\ref{NOFG.sec.4.prop.1} that two of these arcs are non-separating and share a unique vertex that we shall denote by $u$.

Now that $u$ is set, let us consider $T^+$ for a moment. If this triangulation contains a separating loop twice incident to $u$ then we denote by $\gamma$ the non-separating loop contained in $T^+$. If however, $T^+$ does not contain a separating arc twice incident to $u$ then either this triangulation contains a non-separating arc $\gamma$ between $u$ and a vertex $w$ different from $u$ or it does not contain any non-separating arc incident to $u$. In the latter case, it follows from Lemma~\ref{NOFG.sec.4.lem.6} that $T^+$ contains a non-separating arc $\gamma$ whose extremities are a point $w$ in $\mathcal{P}\mathord{\setminus}\{u\}$ and a point $v$ contained in $\beta_{u,w}\mathord{\setminus}\{u\}$ when $d^-(u,w)$ is at most $n/2$ and in $\beta_{w,u}\mathord{\setminus}\{u\}$ otherwise. We can assume that $d^-(u,w)$ is at most $n/2$ by reversing the clockwise order around the boundary of $\mathrm{M}_{n}$ if needed. In particular, this orientation-reversing operation can be performed without loss of generality as it preserves the centrality of $t$ as a triangle of $T^-$. 
\begin{figure}
\begin{centering}
\includegraphics[scale=1]{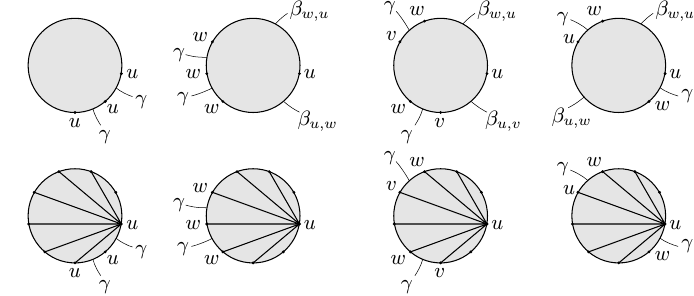}
\caption{$\Delta_{n+2}$ (top) and $F^+$ (bottom).}\label{NOFG.sec.4.fig.3}
\end{centering}
\end{figure}

Cutting $T^+$ along $\gamma$ results in a triangulation $U^+$ of a disk $\Delta_{n+2}$ shown in the top row of Figure~\ref{NOFG.sec.4.fig.3}. The case when $T^+$ contains a separating loop twice incident to $u$ is shown on the left where the separating loop is the only represented interior arc. The cases when $\gamma$ is a loop twice incident to $w$, an arc incident to $u$ and $w$, or and arc incident to $v$ and $w$ are further shown in the same row from left to right. One can see that in these three cases, $\beta_{w,u}$ is a connected subset of the boundary of $\Delta_{n+2}$.

As is well known~\cite{SleatorTarjanThurston1988}, one can transform $U^+$ in a triangulation $F^+$ of $\Delta_{n+2}$ whose all interior arcs are incident to the same vertex using a sequence of at most $n-k-1$ flips where $k$ denotes the number of interior arcs of $U$ that are already incident to that vertex. We pick for that vertex the copy of $u$ in the boundary of $\Delta_{n+2}$ shown in the bottom row of Figure~\ref{NOFG.sec.4.fig.3} in each of the four considered cases. We perform this sequence of at most $n-1$ flip within $T^+$ and then flip $\gamma$ if that arc is not incident to $u$. Note that if $T^+$ contains a separating loop twice incident to $u$, then at most $n-2$ flips are required between $U^+$ and $F^+$ because that loop already belongs to $F^+$ but in that case we further flip this loop in order to introduce a non-separating arc incident to $u$. After this sequence of at most $n$ flips, we have reached the triangulation $C_u(v,w)$ where, in the case when $v$ or $w$ has not been defined yet these points are equal to the point of $\mathcal{P}$ that immediately follows $u$ clockwise. Therefore,
\begin{equation}\label{NOFG.sec.4.thm.2.eq.1}
d\bigl(T^+,C_u(v,w)\bigr)\leq{n}\mbox{.}
\end{equation}

Let us turn our attention back to $T^-$. Since $t$ is a central triangle of $T^-$, one of the non-separating arcs $\gamma'$ that bound $t$ is either a loop twice incident to $u$ or an arc between $u$ and a point $w'$ different from $u$ but such that $d^-(u,w')$ is at most $n/2$. Cutting $T^-$ along $\gamma'$ and considering a sequence of flips like the one we used to transform $T^+$ into $C_u(v,w)$ except that we do not flip $\gamma'$ (because it is already incident to $u$), we can change $T^-$ into a triangulation $C_u(v',w')$ where $v'$ is the point of $\mathcal{P}$ that immediately follows $u$ clockwise. Note that $\gamma'$ does not need to be flipped in that case because it is already incident to $u$ and therefore the distance of $T^-$ and $C_u(v',w')$ in $\mathcal{MF}(\mathrm{M}_n)$ satisfies
\begin{equation}\label{NOFG.sec.4.thm.2.eq.2}
d\bigl(T^-,C_u(v',w')\bigr)\leq{n-1}\mbox{.}
\end{equation}

Since both $d^-(u,w)$ and $d^-(u,w')$ are at most $n/2$, Combining (\ref{NOFG.sec.4.thm.2.eq.1}) and (\ref{NOFG.sec.4.thm.2.eq.2}) with (\ref{NOFG.sec.4.eq.1}) provides the desired inequality.
\end{proof}

\section{Bounds on the diameter of $\mathcal{F}_\star(\mathrm{M}_n)$}\label{NOFG.sec.5}

A triangulation of $\mathrm{M}_n$ is \emph{simplicial} when it does not contain any loop arc or any two arcs with the same pair of vertices. In other words, the triangles, arcs, and vertices of a simplicial triangulation form a simplicial complex. It has been shown by Edelman and Reiner~\cite{EdelmanReiner1997} that the subgraph $\mathcal{F}_\star(\mathrm{M}_n)$ of $\mathcal{F}(\mathrm{M}_n)$ induced by the simplicial triangulations is connected. In this section, we bound the diameter of $\mathcal{F}_\star(\mathrm{M}_n)$. As mentioned in~\cite{EdelmanReiner1997}, $\mathcal{F}_\star(\mathrm{M}_n)$ is empty when $n$ is at most $4$ and $\mathrm{M}_5$ has a unique simplicial triangulation shown in Figure~\ref{NOFG.sec.5.fig.1} in the cross-cap representation of the M{\"o}bius strip.
\begin{figure}[b]
\begin{centering}
\includegraphics[scale=1]{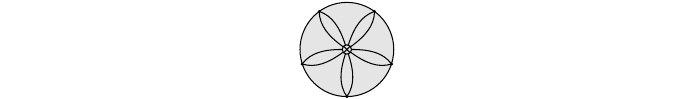}
\caption{The unique simplicial triangulation of $\mathrm{M}_5$.}\label{NOFG.sec.5.fig.1}
\end{centering}
\end{figure}

Our lower bound on the diameter of $\mathcal{F}_\star(\mathrm{M}_n)$ will be a consequence of Theorem~\ref{NOFG.sec.3.thm.1}. In order to  relate the diameters of $\mathcal{F}_\star(\mathrm{M}_n)$ and $\mathcal{F}(\mathrm{M}_n)$, we will show that every triangulation of $\mathrm{M}_n$ is close in $\mathcal{F}_\star(\mathrm{M}_n)$ to a simplicial triangulation. Assume that $n$ is at least $5$ and consider a triangulation $T$ of $\mathrm{M}_n$. Observe first that, when the interior arcs of $T$ are not all non-separating, there is always a flip that replaces a a separating arc by a non-separating arc. In order to see that, let us cut away all the triangles of $T$ that are not incident to a non-separating arc. Since $T$ contains at least one non-separating arc, this results in a triangulation of $\mathrm{M}_{n-k}$ where $k$ is the number of separating arcs in $T$. At least one of the boundary arcs of $\mathrm{M}_{n-k}$ is a separating interior arc of $T$. Flipping that arc introduces a non-separating arc in $T$. This proves the following.

\begin{prop}\label{NOFG.sec.5.prop.1}
When $n$ is at least $5$, any triangulation of $\mathrm{M}_n$ is at most four flips away from a triangulation with at least five non-separating arcs
\end{prop}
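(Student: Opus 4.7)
The plan is to iterate the one-step flip move established in the paragraph preceding the proposition. If a given triangulation $T$ of $\mathrm{M}_n$ already has at least five non-separating arcs, there is nothing to do; otherwise I would replace separating arcs one at a time by non-separating ones.

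For this iteration to make sense I would first establish two invariants. A standard Euler characteristic computation for $\mathrm{M}$ (using $\chi(\mathrm{M}) = 0$ together with the fact that each triangle contributes three edge-incidences, each interior arc two, and each boundary arc one) shows that every triangulation of $\mathrm{M}_n$ has exactly $n$ interior arcs. Moreover, every triangulation of $\mathrm{M}_n$ must contain at least one non-separating arc: otherwise cutting along the interior arcs one at a time would successively split off a disk, eventually realising $\mathrm{M}_n$ as a disjoint union of disks, which contradicts its non-orientability.

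Given $T$ with $j$ non-separating arcs, where $1 \leq j \leq 4$, the total interior arc count $n \geq 5$ ensures at least $n - j \geq 1$ separating interior arcs, so the move from the preceding paragraph applies and produces a triangulation of $\mathrm{M}_n$ with $j + 1$ non-separating arcs. Iterating as long as the non-separating count stays below $5$ terminates after at most $5 - j \leq 4$ flips. The hypotheses of the one-step move persist at every stage, since flips preserve the total number of interior arcs and the non-separating count remains strictly below $5 \leq n$ until the process terminates. I do not anticipate a genuine obstacle: the proposition reduces to a short finite iteration of an already justified one-step lemma, with the only bookkeeping being the invariants $\kappa = n$ and the non-vanishing of the non-separating arc set.
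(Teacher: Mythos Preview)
Your proposal is correct and follows essentially the same approach as the paper: the paper establishes the one-step move in the paragraph immediately preceding the proposition and then states the proposition as an immediate consequence, while you make the iteration explicit and supply the Euler-characteristic count $\kappa(\mathrm{M}_n)=n$ to guarantee that a separating interior arc exists whenever $j<5\leq n$. The only addition on your side is this bookkeeping, which the paper leaves implicit.
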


Now recall that if a triangulation of $\mathrm{M}_n$ contains a separating loop, then its only non-separating arc is another loop as shown on the left of Figure~\ref{NOFG.sec.1.fig.1}. Therefore, a triangulation $T$ of $\mathrm{M}_n$ that contains at least two non-separating arcs has at most one loop and that loop is one of the non-separating arcs of $T$. If $T$ contains a non-separating loop twice incident to a point $u$ and at least two other non-separating arcs, then the two triangles of $T$ incident to the loop must as shown in Figure~\ref{NOFG.sec.5.fig.2} where $v$ and $w$ are different vertices. 
\begin{figure}[b]
\begin{centering}
\includegraphics[scale=1]{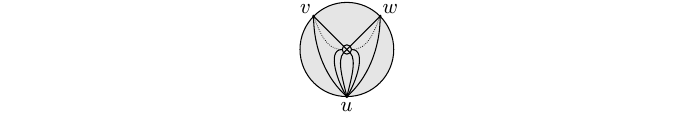}
\caption{The two triangles incident to a non-separating loop in a triangulation of $\mathrm{M}_n$ and the arc introduced when flipping that loop represented as a dotted line.}\label{NOFG.sec.5.fig.2}
\end{centering}
\end{figure}
In particular, flipping the loop introduces the non-separating arc shown as a dotted line in the figure. Since that arc is not a loop, this proves the following.

\begin{prop}\label{NOFG.sec.5.prop.2}
If a triangulation $T$ of $\mathrm{M}_n$ contains at least three non-separating arcs one of whose is a loop, then $T$ is a single flip away from a loopless triangulation with the same number of non-separating arcs.
\end{prop}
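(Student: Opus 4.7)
The approach is to analyze the two triangles of $T$ incident to the non-separating loop $\lambda$ and show that flipping $\lambda$ yields the desired triangulation.

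First I would invoke the paragraph preceding the statement to conclude that $\lambda$ is the unique loop of $T$: the hypothesis of at least three non-separating arcs in particular guarantees at least two non-separating arcs, which rules out any separating loop in $T$ and forces every loop of $T$ to coincide with the non-separating one given in the hypothesis. Let $u$ denote the vertex that $\lambda$ is twice incident to, and let $t_1$, $t_2$ be the two triangles of $T$ incident to $\lambda$. I would then show $t_1\neq t_2$: otherwise this single triangle would be bounded by only two arcs by Proposition~\ref{NOFG.sec.4.prop.1}(ii), namely $\lambda$ and a second loop, which would have to be separating and thereby contradict the uniqueness of $\lambda$ as a loop. Hence $t_1$ and $t_2$ are distinct, and each is bounded by three distinct arcs of $T$ (no second loop being available). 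Denote by $v$ (respectively $w$) the third vertex of $t_1$ (respectively $t_2$); the required inequality $v\neq w$ would be read off from the local configuration in the cross-cap picture summarized in Figure~\ref{NOFG.sec.5.fig.2}.

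Next I would execute the flip. Flipping $\lambda$ replaces it by the other diagonal $\mu$ of the quadrilateral obtained by gluing $t_1$ and $t_2$ along $\lambda$, namely an arc from $v$ to $w$. Since $v\neq w$, the arc $\mu$ is not a loop, and since $\lambda$ was the unique loop of $T$, the resulting triangulation is loopless. To conclude that the count of non-separating arcs is preserved, I would verify that $\mu$ is non-separating: the quadrilateral $t_1\cup_\lambda t_2$ wraps once around the cross-cap because $\lambda$ does, and the other diagonal $\mu$ inherits this winding, so $\mathrm{M}_n\setminus\mu$ remains connected.

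The main obstacle I expect is the careful verification that $v\neq w$ and that $\mu$ is non-separating, both of which rest on local topological inspections in the cross-cap model. These checks are not conceptually difficult but do require ruling out degenerate configurations around $\lambda$ in which the two incident triangles could share additional structure beyond what Figure~\ref{NOFG.sec.5.fig.2} depicts.
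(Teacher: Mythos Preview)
Your proposal is correct and follows essentially the same approach as the paper: the paper's argument (contained in the paragraph immediately preceding the proposition) likewise observes that $\lambda$ is the unique loop, that the two triangles incident to $\lambda$ have distinct third vertices $v$ and $w$ as in Figure~\ref{NOFG.sec.5.fig.2}, and that the flip introduces a non-separating non-loop arc between them. Your write-up is slightly more explicit in justifying $t_1\neq t_2$ via Proposition~\ref{NOFG.sec.4.prop.1}(ii) and in spelling out why the new arc is non-separating, but the strategy is the same.
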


Provided that $n$ is at least $5$, we now show that every triangulation of $\mathrm{M}_n$ is a constant number of flips away from a simplicial triangulation.

\begin{lem}\label{NOFG.sec.5.lem.1}
If $n$ is at least $5$, then any triangulation of $\mathrm{M}_n$ is at most seven flips away from a simplicial triangulation of $\mathrm{M}_n$.
\end{lem}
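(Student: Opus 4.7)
The plan is to peel off the non-simplicial features of a triangulation one at a time, staying within the family of triangulations that carry enough non-separating arcs. Starting from an arbitrary triangulation $T$ of $\mathrm{M}_n$ with $n\geq5$, I would first apply Proposition~\ref{NOFG.sec.5.prop.1} to obtain, in at most four flips, a triangulation $T_1$ of $\mathrm{M}_n$ with at least five non-separating arcs. Recall that if a triangulation of $\mathrm{M}_n$ contains a separating loop, then that loop encloses a cross-cap and its only non-separating arc is a second loop twice incident to the same vertex (as noted just before Proposition~\ref{NOFG.sec.5.prop.2}). Since $T_1$ has at least five non-separating arcs, $T_1$ therefore contains no separating loop.

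Next I would handle the possible loops of $T_1$. Any loop of $T_1$ must be non-separating, and by Proposition~\ref{NOFG.sec.5.prop.2} one flip suffices to replace it with a non-loop non-separating arc without decreasing the number of non-separating arcs. After this (optional) flip we reach a loopless triangulation $T_2$ of $\mathrm{M}_n$ with at least five non-separating arcs, at a total cost of at most five flips.

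It then remains to transform $T_2$ into a simplicial triangulation in at most two further flips. If $T_2$ is already simplicial, we are done. Otherwise $T_2$ contains two distinct interior arcs $\alpha_1$ and $\alpha_2$ sharing the same pair of vertices $u,v$, and I would run a case analysis on the subsurface of $\mathrm{M}_n$ enclosed by $\alpha_1\cup\alpha_2$. One of the two sides is either a disk bigon whose interior contains boundary arcs of $\mathrm{M}_n$, or a non-orientable subsurface containing the cross-cap. In either case I would exhibit a flip of an arc incident to $\alpha_1$ or $\alpha_2$ that replaces one arc of the parallel pair by an arc which is neither a loop nor parallel to any remaining arc; a second flip, chosen from the same local configuration if necessary, then resolves any second parallel pair or any new defect introduced by the first flip. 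The hypothesis that $T_2$ has at least five non-separating arcs is used here to rule out the denser configurations in which three or more simultaneous parallel pairs could coexist.

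The main obstacle will be this last step: rigorously bounding, in the loopless regime with at least five non-separating arcs, the number of parallel pairs by two and exhibiting explicit flips that resolve them without re-introducing loops or creating new parallel pairs elsewhere. A subtle point is that flipping one arc of a parallel pair can, in principle, create a new parallel pair with a separating arc on the opposite side of the associated quadrilateral, so the argument must carefully select which arc to flip in each configuration. Once this bound of two is established, the total count $4+1+2=7$ yields the lemma.
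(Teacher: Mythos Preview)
Your outline coincides with the paper's proof: four flips via Proposition~\ref{NOFG.sec.5.prop.1}, one via Proposition~\ref{NOFG.sec.5.prop.2} to reach a loopless triangulation $T'$ with at least five non-separating arcs, then at most two more to kill parallel pairs, for a total of $4+1+2=7$.

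For the step you flag as the obstacle, the paper's argument is cleaner than a case analysis on the enclosed subsurface. In a loopless $T'$, any parallel pair must consist of one non-separating arc $\beta$ and one separating or boundary arc $\alpha$: two separating or boundary arcs with the same endpoints would bound a copy of $\mathrm{M}_2$, and every triangulation of $\mathrm{M}_2$ contains a loop. Since all non-separating arcs of $T'$ are then incident to $a$ or $b$ (the endpoints of $\beta$), flipping $\beta$ produces a non-separating, non-loop arc that is not parallel to any remaining arc---so no new defect is created and your worry about cascading parallel pairs does not materialise. The bound of at most two simultaneous pairs is exactly as you anticipate: three pairs would force the specific $\mathrm{M}_3$ configuration in which there are only three non-separating arcs, contradicting the five already secured. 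Thus the paper simply flips the non-separating member of each pair, rather than analysing the bigon region.
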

\begin{proof}
Assume that $n$ is at least $5$ and consider a triangulation $T$ of $\mathrm{M}_n$. According to Propositions~\ref{NOFG.sec.5.prop.1} and~\ref{NOFG.sec.5.prop.2}, $T$ is at most $4$ flips away in $\mathcal{F}(\mathrm{M}_n)$ from a triangulation $T'$ with no loop and at least five non-separating arcs. If $T'$ is not simplicial, it contains at least one pair of arcs $\alpha$ and $\beta$ with the same two distinct vertices $a$ and $b$. Observe that $\alpha$ or $\beta$ must be non-separating. Indeed up to isotopy, there are precisely two different separating or boundary arcs with vertices $a$ and $b$ in $\mathrm{M}_n$ and together, they bound a subsurface $\mathrm{M}_2$ within $\mathcal{F}(\mathrm{M}_n)$. However, all the triangulations of $\mathrm{M}_2$ contain a loop arc whereas $T'$ does not and therefore $\alpha$ and $\beta$ cannot both be either a separating or a boundary arc. As there is only one non-separating arc between $a$ and $b$ up to isotopy, $\alpha$ is a separating or a boundary arc while $\beta$ is a non-separating arc.
\begin{figure}[b]
\begin{centering}
\includegraphics[scale=1]{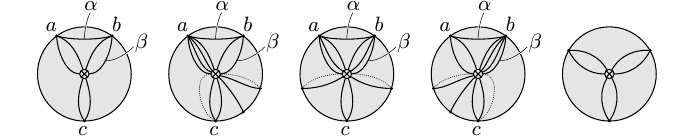}
\caption{The triangulation $T'$ (left), a flip in this triangulation shown as a function of the number of non-separating arcs incident to $a$ and $b$ (center), and a triangulation with three pairs of arcs with the same two vertices (right).}\label{NOFG.sec.5.fig.3}
\end{centering}
\end{figure}

The triangulation $T'$ is then as shown on the left of Figure~\ref{NOFG.sec.5.fig.3}. Note that, in this figure, the point $c$ is different from $a$ and $b$ because otherwise $T'$ would contain a loop. Further note that all the non-separating arcs of $T'$ must be incident to $a$ or $b$. As a consequence, flipping $\beta$ in $T'$ introduces a non-separating arc that is not a loop and such that no other arc of $T'$ has the same pair of vertices as sketched at the center of Figure~\ref{NOFG.sec.5.fig.3} depending on whether $a$, $b$, or neither of these two points are incident to exactly two non-separating arcs in $T'$. Finally, observe that a triangulation of $\mathrm{M}_n$ contains at most three different pairs of arcs with the same two vertices and that it contains exactly three such pairs when the triangles incident to the non-separating arcs in these pairs form the triangulation of $\mathrm{M}_3$ shown on the right of Figure~\ref{NOFG.sec.5.fig.3}. Such a triangulation has just three non-separating arcs. As $T'$ contains at least five non-separating arcs, at most two pairs of arcs in $T'$ can have the same two vertices. Therefore, $T'$ is at most two flips away from a simplicial triangulation.
\end{proof}

We can now lower bound the diameter of $\mathcal{F}_\star(\mathrm{M}_n)$ using Lemma~\ref{NOFG.sec.5.lem.1} and the lower bound from Theorem~\ref{NOFG.sec.3.thm.1} on the diameter of $\mathcal{F}(\mathrm{M}_n)$.

\begin{thm}\label{NOFG.sec.5.thm.1}
For all $n$ at least $5$,
$$
\mathrm{diam}(\mathcal{F}_\star(\mathrm{M}_n))\geq\biggl\lfloor\frac{5}{2}n\biggr\rfloor-16\mbox{.}
$$
\end{thm}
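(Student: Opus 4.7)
The plan is to reduce the desired lower bound to the one already available for $\mathcal{F}(\mathrm{M}_n)$ from Theorem~\ref{NOFG.sec.3.thm.1}, by using Lemma~\ref{NOFG.sec.5.lem.1} to approximate arbitrary triangulations by simplicial ones and then exploiting the fact that $\mathcal{F}_\star(\mathrm{M}_n)$ is an induced subgraph of $\mathcal{F}(\mathrm{M}_n)$.

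Concretely, I would begin by invoking Theorem~\ref{NOFG.sec.3.thm.1} (applied to the M\"obius strip, for which $\mathcal{MF}(\mathrm{M}_n)$ coincides with $\mathcal{F}(\mathrm{M}_n)$ since the pure mapping class group of $\mathrm{M}$ is trivial) to obtain two triangulations $T^-$ and $T^+$ of $\mathrm{M}_n$ whose distance in $\mathcal{F}(\mathrm{M}_n)$ satisfies
$$
d(T^-,T^+)\geq\biggl\lfloor\frac{5}{2}n\biggr\rfloor-2\mbox{.}
$$
By Lemma~\ref{NOFG.sec.5.lem.1}, there exist simplicial triangulations $S^-$ and $S^+$ of $\mathrm{M}_n$ such that $d(T^-,S^-)\leq 7$ and $d(T^+,S^+)\leq 7$ in $\mathcal{F}(\mathrm{M}_n)$. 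The triangle inequality in $\mathcal{F}(\mathrm{M}_n)$ then yields
$$
d(S^-,S^+)\geq d(T^-,T^+)-14\geq\biggl\lfloor\frac{5}{2}n\biggr\rfloor-16
$$
where this distance is still measured in $\mathcal{F}(\mathrm{M}_n)$.

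The final step is to upgrade this into a lower bound on the distance of $S^-$ and $S^+$ within $\mathcal{F}_\star(\mathrm{M}_n)$. Since $\mathcal{F}_\star(\mathrm{M}_n)$ is by definition the subgraph induced in $\mathcal{F}(\mathrm{M}_n)$ by the simplicial triangulations, any path in $\mathcal{F}_\star(\mathrm{M}_n)$ between $S^-$ and $S^+$ is also a path in $\mathcal{F}(\mathrm{M}_n)$, so distances in the former bound distances in the latter from above. Combined with the connectivity of $\mathcal{F}_\star(\mathrm{M}_n)$ recalled from~\cite{EdelmanReiner1997} (which guarantees that the distance in $\mathcal{F}_\star(\mathrm{M}_n)$ is well-defined), this gives
$$
\mathrm{diam}(\mathcal{F}_\star(\mathrm{M}_n))\geq d_{\mathcal{F}_\star(\mathrm{M}_n)}(S^-,S^+)\geq d_{\mathcal{F}(\mathrm{M}_n)}(S^-,S^+)\geq\biggl\lfloor\frac{5}{2}n\biggr\rfloor-16\mbox{.}
$$

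There is no real obstacle here once Lemma~\ref{NOFG.sec.5.lem.1} is in hand; the argument is essentially a two-line triangle-inequality chase together with the induced-subgraph observation. The only subtlety worth being explicit about is the direction of the distance comparison between the two graphs: distances can only increase when passing to an induced subgraph, which is exactly what makes the approximation by simplicial triangulations useful for transferring a \emph{lower} bound from $\mathcal{F}(\mathrm{M}_n)$ to $\mathcal{F}_\star(\mathrm{M}_n)$.
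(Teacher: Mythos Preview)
Your proposal is correct and follows essentially the same approach as the paper: both combine Lemma~\ref{NOFG.sec.5.lem.1} with Theorem~\ref{NOFG.sec.3.thm.1} via the triangle inequality and the observation that distances in the induced subgraph $\mathcal{F}_\star(\mathrm{M}_n)$ dominate those in $\mathcal{F}(\mathrm{M}_n)$. The paper phrases this as the global inequality $\mathrm{diam}(\mathcal{F}(\mathrm{M}_n))\leq\mathrm{diam}(\mathcal{F}_\star(\mathrm{M}_n))+14$, while you work with a specific pair of far-apart triangulations, but the content is the same.
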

\begin{proof}
By Lemma~\ref{NOFG.sec.5.lem.1}, there exists a path of length at most
$$
\mathrm{diam}(\mathcal{F}_\star(\mathrm{M}_n))+14
$$
in $\mathcal{F}(\mathrm{M}_n)$ between any two triangulations of $\mathrm{M}_n$. As a consequence,
$$
\mathrm{diam}(\mathcal{F}(\mathrm{M}_n))\leq\mathrm{diam}(\mathcal{F}_\star(\mathrm{M}_n))+14
$$
and the result follows from Theorem~\ref{NOFG.sec.3.thm.1}.
\end{proof}

\begin{rem}\label{NOFG.sec.5.rem.1}
Even though Theorem~\ref{NOFG.sec.5.thm.1} is proven using Theorem~\ref{NOFG.sec.3.thm.1}, the boundary arc contractions that we relied on to establish the latter theorem cannot be used directly to lower bound the diameter of $\mathcal{F}_\star(\mathrm{M}_n)$ because they can transform a simplicial triangulation of $\mathrm{M}_n$ into a non-simplicial one.
\end{rem}

Let us turn to upper bounding the diameter of $\mathcal{F}_\star(\mathrm{M}_n)$. In order to do that let us consider a simplicial triangulation $T$ of $\mathrm{M}_n$ and introduce some notions and terminology. Consider a non-separating arc $\alpha$ in $T$ with vertices $a$ and $b$. Cutting $T$ along $\alpha$ results in a triangulation $U$ of a disk $\Delta_{n+2}$ with $n+2$ marked points and two copies of $\alpha$ in the boundary. Since $T$ is simplicial, $\alpha$ is not a loop and therefore, $\Delta_{n+2}$ has two copies of $a$ and two copies of $b$ in its boundary. Moreover, each copy of $a$ or $b$ is consecutive in that boundary to a single copy of the other point via a copy of $\alpha$ as sketched in Figure~\ref{NOFG.sec.5.fig.4}.
\begin{figure}[b]
\begin{centering}
\includegraphics[scale=1]{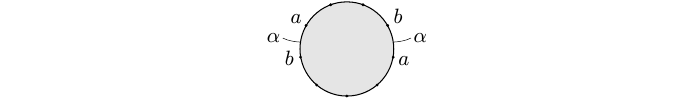}
\caption{The disk $\Delta_{n+2}$.}\label{NOFG.sec.5.fig.4}
\end{centering}
\end{figure}

Now consider a vertex $u$ of $U$ that is neither a copy of $a$ or a copy of $b$. The $k$ triangles of $U$ incident to $u$ collectively form a triangulation of the disk $\Delta_{k+2}$ with $k+2$ marked point in its boundary. This triangulation of $\Delta_{k+2}$ will be called the \emph{star} of $u$ in $U$ and denoted by $\mathrm{star}_U(u)$. Note that all the interior arcs of $\mathrm{star}_U(u)$ are incident to $u$. Again, since $T$ is simplicial, at most one copy of $a$ and one copy of $b$ are vertices of $\mathrm{star}_U(u)$. For the same reason, a boundary arc of $\mathrm{star}_U(u)$ is between a copy of $a$ and a copy of $b$ only when $u$ is a vertex of one of the two triangles of $U$ incident to a copy of $\alpha$. Further observe that a boundary arc of $\mathrm{star}_U(u)$ is between $u$ and a copy of $a$ or $b$ precisely when $u$ is consecutive to that copy in the boundary of $U$. As at most four vertices of $U$ are consecutive to a copy of $a$ or $b$ in the boundary of $U$ and exactly two triangles of $U$ are incident to a copy of $\alpha$, we get the following.

\begin{prop}\label{NOFG.sec.5.prop.3}
At least $n-8$ of the vertices $u$ of $U$ different from both $a$ and $b$ are such that no boundary arc of $\mathrm{star}_U(u)$ is between a copy of $a$ or a copy of $b$ or between $u$ and a copy of either $a$ or $b$.
\end{prop}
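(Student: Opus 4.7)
The plan is to bound the number of vertices $u$ of $U$ different from both $a$ and $b$ that violate either of the two properties in the statement. Since $\Delta_{n+2}$ has $n+2$ marked points, exactly four of which are copies of $a$ or $b$, there are $n-2$ candidate vertices in total, and I would show that at most six of them are violating, which immediately yields the bound of $n-8$ good vertices.

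For this I would invoke the two characterisations established in the paragraph preceding the proposition: a boundary arc of $\mathrm{star}_U(u)$ joins a copy of $a$ to a copy of $b$ only when $u$ is the third vertex of one of the two triangles of $U$ incident to a copy of $\alpha$; and a boundary arc of $\mathrm{star}_U(u)$ joins $u$ to a copy of $a$ or $b$ exactly when $u$ is consecutive to that copy along the boundary of $\Delta_{n+2}$.

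For the first type of violation, each of the two triangles incident to a copy of $\alpha$ has two of its vertices pinned to be a copy of $a$ and a copy of $b$. Simpliciality of $T$ rules out the third vertex being yet another copy of $a$ or $b$, since that would force a loop of $T$ at $a$ or $b$, so each such triangle contributes at most one violating candidate, for a total of at most two. For the second type, each of the four copies of $a$ or $b$ has two boundary-neighbours in $\Delta_{n+2}$, one of which is already the other endpoint of a copy of $\alpha$. The remaining neighbour is linked to the given copy by a boundary arc of $\mathrm{M}_n$, and simpliciality of $T$ again excludes this neighbour from being itself a copy of $a$ or $b$: otherwise $T$ would contain two distinct arcs with the same pair of endpoints (or a loop). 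This yields at most four further violating vertices.

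Adding the two counts gives at most $6$ violating vertices among the $n-2$ candidates, so at least $n-8$ of them satisfy both conditions. The main point to check carefully is the twofold use of simpliciality, which is what ensures that the vertices picked up in each count are genuinely candidates (i.e., distinct from $a$ and $b$) rather than being absorbed by one of the four boundary copies of $a$ or $b$; once that is in place, the bound is purely combinatorial and the overlap between the two counts can only improve it.
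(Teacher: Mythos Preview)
Your proof is correct and follows essentially the same approach as the paper: the proposition is stated there as an immediate consequence of the two characterisations in the preceding paragraph, yielding at most $2+4=6$ violating vertices among the $n-2$ candidates. Your write-up simply spells out the simpliciality checks a bit more explicitly, but the argument is the same.
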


We upper bound the diameter of $\mathcal{F}_\star(\mathrm{M}_n)$ using Proposition~\ref{NOFG.sec.5.prop.3}.

\begin{thm}\label{NOFG.sec.5.thm.2}
There exists a constant $K$ such that for every $n$ at least $5$,
$$
\mathrm{diam}(\mathcal{F}_\star(\mathrm{M}_n))\leq4n+K\mbox{.}
$$
\end{thm}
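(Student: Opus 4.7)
The plan is to mirror the inductive scheme behind Theorem~\ref{NOFG.sec.2.thm.3}, but with every flip kept inside $\mathcal{F}_\star(\mathrm{M}_n)$. Given two simplicial triangulations $T^-$ and $T^+$ of $\mathrm{M}_n$, I would aim to produce simplicial triangulations $\widetilde{T}^-$ and $\widetilde{T}^+$ sharing a common boundary ear $v$ with
\[
d(T^-,\widetilde{T}^-) + d(T^+,\widetilde{T}^+) \leq 4,
\]
the distances being measured in $\mathcal{F}_\star(\mathrm{M}_n)$. Since the simplicial triangulations of $\mathrm{M}_n$ admitting $v$ as an ear form a subgraph of $\mathcal{F}_\star(\mathrm{M}_n)$ isomorphic to $\mathcal{F}_\star(\mathrm{M}_{n-1})$, such a reduction yields $\mathrm{diam}(\mathcal{F}_\star(\mathrm{M}_n)) \leq 4 + \mathrm{diam}(\mathcal{F}_\star(\mathrm{M}_{n-1}))$ as soon as $n$ exceeds some absolute threshold, and iterating produces the bound $4n + K$ (with $K$ absorbing the threshold and the finitely many small cases).

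The existence of a suitable $v$ combines two counting arguments. First, the Euler characteristic identity $\kappa(\mathrm{M}_n)/n\to1$ forces, for $n$ large enough, at least one boundary vertex to carry at most four interior arcs of $T^-$ and $T^+$ combined, just as in the discussion preceding Theorem~\ref{NOFG.sec.2.thm.3}. Second, and this is where simpliciality enters, the four flips implementing the ear reduction must not introduce a loop or a pair of arcs sharing both endpoints. To control this, I would pick a non-separating arc in each of $T^-$ and $T^+$, cut to obtain disk triangulations $U^-$ and $U^+$, and invoke Proposition~\ref{NOFG.sec.5.prop.3}: it guarantees at least $n-8$ vertices in each $U^\pm$ whose star is combinatorially isolated from the cut, so that any flip confined to such a star preserves simpliciality. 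Intersecting the two sets of isolated vertices with the set of boundary vertices of low combined interior-arc degree still leaves a nonempty choice for $v$ once $n$ is sufficiently large.

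The principal obstacle will be establishing the simplicial analog of Theorem~\ref{NOFG.sec.2.thm.2}: given such an isolated, low-degree vertex $v$, actually produce $\widetilde{T}^\pm$ simplicially in at most four flips. I would proceed by a case analysis on the local configuration of interior arcs at $v$ in each of $T^\pm$ (none, one, two, with or without a loop, with or without a shared second endpoint), reusing the flip sequences from \cite[Lemma~3.2]{ParlierPournin2017} but checking flip-by-flip that the newly introduced arc is neither a loop nor a duplicate. The isolation granted by Proposition~\ref{NOFG.sec.5.prop.3} will confine every potential pathology to a controlled neighborhood of $v$ inside the disk, where the local structure is tame enough that the standard flips apply directly, possibly after a bounded number of preparatory flips rearranging the star of $v$ into a simplicial-friendly shape. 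These preparatory flips only occur in finitely many small configurations, so they contribute to the additive constant $K$ rather than to the linear slope $4$. Combined with the vertex-selection argument and induction on $n$, this yields the claimed bound.
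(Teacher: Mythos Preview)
Your proposal is correct and follows essentially the same route as the paper: cut each of $T^\pm$ along a non-separating arc, use Proposition~\ref{NOFG.sec.5.prop.3} to isolate a large set of ``safe'' vertices, intersect, and average to find a vertex $u$ of combined interior degree at most~$4$ whose ear-reduction stays simplicial; then induct. The paper makes this quantitative by taking $n\geq 77$ so that $(4n-4)/(n-16)<5$.

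Where the paper is cleaner is in the step you flag as the principal obstacle. You anticipate a case analysis in the style of \cite[Lemma~3.2]{ParlierPournin2017} together with possible ``preparatory flips''; the paper avoids both. The observation is that any non-simplicial arc produced by a flip inside the disk $U^-$ must, after regluing, be incident to $a^-$ or $b^-$. Because $u\in\mathcal{S}^-$, any copy of $a^-$ or $b^-$ in $\mathrm{star}_{U^-}(u)$ is incident to an interior arc of that star and its two neighbours on the star boundary are ordinary vertices. Hence one simply flips the (at most two) interior arcs from $u$ to a copy of $a^-$ or $b^-$ \emph{first}; each such flip introduces an arc between two ordinary vertices. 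The remaining interior arcs of the star may then be flipped in any order, and none of the introduced arcs touches $a^-$ or $b^-$. No case analysis and no extra flips are needed, so the cost is exactly the number of interior arcs incident to $u$, and the induction closes with slope~$4$ as you intended.
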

\begin{proof}
Let us pick
$$
K=\max\bigl\{\mathrm{diam}(\mathcal{F}_\star(\mathrm{M}_n))-4n:5\leq{n}\leq76\bigr\}\mbox{.}
$$ 

By this choice of $K$, the desired statement is immediate when $n$ is at most $76$. Let us prove it by induction when $n$ is at least $77$. Consider two triangulations $T^-$ and $T^+$ of $\mathrm{M}_n$. Consider a non-separating arc $\alpha^-$ of $T^-$ with (necessarily distinct) vertices $a^-$ and $b^-$. Further consider the triangulation $U^-$ obtained by cutting $T^-$ along $\alpha^-$. Denote by $\mathcal{S}^-$ the set of the points $u$ in $\mathcal{P}\mathord{\setminus}\{a^-,b^-\}$ such that no boundary arc of $\mathrm{star}_{U^-}(u)$ is between a copy of $a^-$ or a copy of $b^-$ or between $u$ and a copy of either $a^-$ or $b^-$. Likewise, consider a non-separating arc $\alpha^+$ of $T^+$. Denote by $a^-$ and $a^+$ the vertices of that arc and consider the triangulation $U^+$ and the subset $\mathcal{S}^+$ of $\mathcal{P}\mathord{\setminus}\{a^+,b^+\}$ obtained from $U^+$ as $\mathcal{S}^-$ is obtained from $U^-$. According to Proposition~\ref{NOFG.sec.5.prop.3},
\begin{equation}\label{NOFG.sec.5.thm.2.eq.1}
\bigl|\mathcal{S}^-\cap\mathcal{S}^+\bigr|\geq{n-16}.
\end{equation}

As $U^-$ and $U^+$ have $n-1$ interior arcs, the number of incidences between a vertex and an interior arc is $2n-2$ in each of these triangulations. As a consequence, there must be the combined number of incidences between a point in $\mathcal{S}^-\cap\mathcal{S}^+$ and an interior arc of either $U^-$ or $U^+$ is at most
$$
\frac{4n-4}{\bigl|\mathcal{S}^-\cap\mathcal{S}^+\bigr|}\mbox{.}
$$

Hence, it follows from (\ref{NOFG.sec.5.thm.2.eq.1}) that the combined number of arcs of $U^-$ and $U^+$ incident to a point from $\mathcal{S}^-\cap\mathcal{S}^+$ is on average at most
$$
\frac{4n-4}{n-16}
$$
which is less than $5$ because $n$ is at least $77$. As a consequence, $\mathcal{S}^-\cap\mathcal{S}^+$ must contain a point $u$ such that the combined number of interior arcs of $U^-$ and $U^+$ incident to $u$ is at most $4$. We will now flip all the interior arcs of $\mathrm{star}_{U^-}(u)$ within $T^-$ and all the interior arcs of $\mathrm{star}_{U^+}(u)$ within $T^+$ in order to make $u$ an ear of the resulting two triangulations of $\mathrm{M}_n$.

If $\mathrm{star}_{U^-}(u)$ admits a copy of $a$ or a copy of $b$ as vertices, no boundary arc of $\mathrm{star}_{U^-}(u)$ is between these copies or between $u$ and one of these copies because $u$ belongs to $\mathcal{S}^-$. In particular, the vertices of $\mathrm{star}_{U^-}(u)$ that are copies of $a$ or $b$ must be incident to a (single) interior arc of $\mathrm{star}_{U^-}(u)$. Moreover, flipping these possible two interior arcs of $\mathrm{star}_{U^-}(u)$ first (and in any order) in $T^-$ does not introduce an arc incident to $u$, $a$, or $b$. After these flips, further flipping the remaining interior arcs of $\mathrm{star}_{U^-}(u)$ in any order will result in a triangulation $V^-$ with $u$ as an ear and none of these flips introduces an arc incident to $a$ or $b$. As the only possible multiple arcs or loops that can be introduced by flipping interior arcs of $U^-$ within $T^-$ must be incident to $a$ or $b$, this sequence of flips takes place within $\mathcal{F}_\star(\mathrm{M}_n)$ and in particular, $V^-$ is simplicial.

Similarly, we can go within $\mathcal{F}_\star(\mathrm{M}_n)$ from $T^+$ to a triangulation $V^+$ by flipping each of the interior arcs of $\mathrm{star}_{U^+}(u)$ starting with the arcs incident to $a$ or $b$ if any. Since $V^-$ and $V^+$ are simplicial triangulations that both admit $u$ as an ear, their distance in $\mathcal{F}_\star(\mathrm{M}_n)$ is at most the diameter of $\mathcal{F}_\star(\mathrm{M}_{n-1})$. However, the diameter of $\mathcal{F}_\star(\mathrm{M}_{n-1})$ can be bounded by induction as
$$
\mathrm{diam}(\mathcal{F}_\star(\mathrm{M}_{n-1}))\leq4n-4+K\mbox{.}
$$

As the combined number of interior arcs of $T^-$ and $T^+$ incident to $u$ is at most $4$ and as these interior arcs are precisely the ones that have been flipped to transform $T^-$ and $T^+$ into $V^-$ and $V^+$, this completes the proof.
\end{proof}

Theorem~\ref{NOFG.sec.1.thm.3} is an immediate consequence of Theorems~\ref{NOFG.sec.5.thm.1} and~\ref{NOFG.sec.5.thm.2}.

\noindent{\bf Acknowledgement.} This work was partially funded by the MathSTIC research consortium (CNRS FR3734) from the université Paris 13.

\bibliography{NonOrientableFlipGraphs}
\bibliographystyle{ijmart}

\end{document}